\newcounter{theo}
\numberwithin{equation}{section}
\theoremstyle{plain}
\newtheorem{theorem}[theo]{Theorem}
\newtheorem{lemma}[theo]{Lemma}
\newtheorem{corollary}[theo]{Corollary}
\theoremstyle{remark}
\newtheorem{remark}[theo]{Remark}
\newtheorem{definition}[theo]{Definition}
\renewcommand*{\eqref}[1]{%
  \hyperref[{#1}]{\textup{\tagform@{\ref*{#1}}}}%
}
\newcommand{\R}{\mathbb{R}}
\newcommand{\norm}[1]{\left\vert\left\vert #1\right\vert\right\vert}
\newcommand{\abs}[1]{\left\vert #1 \right\vert}
\newcommand{\scalar}[1]{\langle #1 \rangle}
\newcommand{\mySpan}{\mathrm{span}}
\newcommand{\supp}{\mathrm{supp}}
\begin{document}

\title{A stochastic reconstruction theorem}
\author{Hannes Kern \\ Technische Universität Berlin \\ Email: hake@tu-berlin.de}

\maketitle

\begin{abstract}
In a recent landmark paper, Khoa Lê (2020) established a stochastic sewing lemma which since has found many applications in stochastic analysis.
He further conjectured that a similar result may hold in the context of the reconstruction theorem within Hairer's regularity structures. 
The purpose of this article is to provide such a stochastic reconstruction theorem. We also discuss two variations of this theorem, motivated by different constructions of stochastic integration against white noise. Our formulation makes use of the distributional viewpoint of Caravenna--Zambotti (2021).
\end{abstract}

\tableofcontents

\section{Introduction}

\subsection{Stochastic reconstruction in a nutshell}

In recent years, the reconstruction theorem from Martin Hairer's regularity structure theory \cite{Hairer} has gained popularity in the mathematical community, not just within the said theory, but also as a standalone theorem (see \cite{caravenna}, \cite{reconProof}).

It answers the following question: Given a family of distributions $(F_x)_{x\in\mathbb R^d}$, is it possible to find a single distribution $f$, which around any point $x\in\R^d$ locally looks like $F_x$? The most simple example for such a situation is given by the Young product: Given an $0<\alpha$-Hölder continuous function $g$ and a distribution $h$, which is  $0>\beta$-Hölder continuous, with $\alpha+\beta>0$, it is possible to define a canonical product $g\cdot h$ even for non-smooth $g$. E.g. \cite{BCD} and \cite{RS} have used paraproducts to construct this product. 

As it turns out, there is a clear canonical local approximation of $h\cdot g$: We can approximate $g$ locally with its Taylor polynomial $T_x g$ around any $x\in\R^d$, and set $F_x = h\cdot T_x g$, as $T_x g$ is a smooth function. The only ingredient missing is a way to reconstruct the distribution from its local approximations.

To return to the general setting, let us consider a family of distributions $(F_x)_{x\in\R^d}$ and let us formalize what it means to ``locally look like $F_x$'' around $x$: For a smooth, compactly supported mollifier $\psi$, the rescaled function $\psi_x^\epsilon$ approximates a Dirac distribution $\delta_x$ as $\epsilon\to 0$. Since we formally want ``$F_x(x) = f(x)$'' to hold, we will require that $(f-F_x)(\psi_x^\epsilon)$ vanishes as $\epsilon\to 0$. To be precise, the rigorous condition to ``locally look like $f$'' is given by

\begin{equation}\label{caravenna_result}
\abs{(f-F_x)(\psi_x^\epsilon)}\lesssim \epsilon^\lambda
\end{equation}

\noindent for some $\lambda>0$. For this to be possible, $(F_x)_{x\in\R^d}$ will need to fulfill some kind of continuity assumption: For example, it suffices that $\abs{(F_x-F_y)(\psi_x^\epsilon)}\lesssim \epsilon^\alpha\abs{x-y}^{\gamma-\alpha}$ for some $\alpha<0$, which is the condition the germs in regularity structure fulfill \cite{Hairer}. Caravenna and Zambotti managed to find the optimal condition called coherence in \cite{caravenna}, which ensures \eqref{caravenna_result} and is given as follows: $(F_x)_{x\in\R^d}$ is called coherent, if

\begin{equation}\label{ineq:coherenceIntro}
\abs{(F_x-F_y)(\psi_x^\epsilon)}\lesssim \epsilon^\alpha(\abs{x-y}+\epsilon)^{\gamma-\alpha}
\end{equation}

\noindent for some $\alpha\in\R$ (typically negative) and a $\gamma>0$. Under this assumption, they proved the following theorem:

\begin{theorem}[\cite{caravenna}]
Let $(F_x)_{x\in\R^d}$ be a coherent germ for some $\gamma>0$. Then there exists a unique distribution $f$ with

\begin{equation*}
\abs{(f-F_x)(\psi_x^\epsilon)}\lesssim \epsilon^\gamma.
\end{equation*}
\end{theorem}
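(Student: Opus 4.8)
The plan is to establish uniqueness first (which is comparatively easy) and then existence through a multiscale construction whose convergence is driven by coherence. For uniqueness, suppose $f$ and $\tilde f$ both satisfy the claimed local bound; then $g := f - \tilde f$ satisfies $\abs{g(\psi_x^\epsilon)} \lesssim \epsilon^\gamma$ uniformly in $x$, since the germ $F_x$ cancels. I would show that any distribution with this property must vanish: decomposing an arbitrary test function $\eta$ across dyadic scales in a Littlewood--Paley fashion from rescaled copies of $\psi$, pairing against $g$, and summing the resulting bounds, the geometric series in $2^{-n\gamma}$ converges precisely because $\gamma > 0$, so its tail forces $g(\eta) = 0$ and hence $f = \tilde f$.

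For existence I would fix a compactly supported, sufficiently smooth scaling function $\varphi$ generating a multiresolution analysis, writing $\varphi_y^n$ for its rescaled and translated copies on the grid $2^{-n}\mathbb{Z}^d$, and define the level-$n$ approximation
\begin{equation*}
f_n := \sum_{y \in 2^{-n}\mathbb{Z}^d} F_y(\varphi_y^n)\, \varphi_y^n,
\end{equation*}
so that at each grid point the germ based at that very point is used. The candidate reconstruction is $f := \lim_{n\to\infty} f_n$, and the heart of the matter is the increment $f_{n+1} - f_n$. Using the refinement relation for $\varphi$ and the orthogonal wavelet complement, this increment can be rewritten so that only differences $F_y - F_{y'}$ of the germ at neighbouring base points with $\abs{y-y'} \lesssim 2^{-n}$ appear, each tested against functions localized at scale $2^{-n}$. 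Coherence \eqref{ineq:coherenceIntro} then bounds every such contribution by $\lesssim 2^{-n\alpha}(2^{-n})^{\gamma-\alpha} = 2^{-n\gamma}$ up to the fixed normalization of the basis, and since $\gamma > 0$ the sum $\sum_n 2^{-n\gamma}$ converges, yielding both that $(f_n)$ is Cauchy as a distribution and a quantitative bound on $f - f_n$.

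To close, I would verify the local bound for the mollifier $\psi$ from the statement. Choosing $n \approx \log_2(1/\epsilon)$, I split $(f - F_x)(\psi_x^\epsilon) = (f - f_n)(\psi_x^\epsilon) + (f_n - F_x)(\psi_x^\epsilon)$: the first term is controlled by the convergence estimate above, while the second is once more a sum of germ differences $F_y - F_x$ with $\abs{y-x} \lesssim \epsilon$, to which coherence applies directly to produce the desired $\epsilon^\gamma$.

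The principal obstacle is the bookkeeping of the multiscale estimate: expressing each increment purely through nearby germ differences, tracking the basis normalizations so that \eqref{ineq:coherenceIntro} applies cleanly, and above all transferring the estimates proved for the wavelet basis to the \emph{arbitrary} mollifier $\psi$ appearing in the theorem. This last transfer requires decomposing $\psi_x^\epsilon$ into the multiresolution basis and summing the per-scale contributions, and it is exactly here that the hypothesis $\gamma > 0$ is genuinely needed to make the series converge and complete the argument.
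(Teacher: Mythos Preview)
Your proposal is correct and follows essentially the same wavelet-based approach that the paper employs in Section~\ref{sec:proofs} for its stochastic generalizations: the reconstructing sequence $f^{(n)}(\psi) = \sum_{y\in\Delta_n} F_y(\phi_y^n)\scalar{\phi_y^n,\psi}$, the increment analysis via the refinement relation (your $f_{n+1}-f_n$, the paper's $(III)_m$ and $(II)_m$ terms), and the final splitting $(f-F_x)(\psi_x^\lambda)$ into the $(I)$ piece plus tails --- all match the structure of Lemmas~\ref{lem:technical1}, \ref{lem:technical2}, \ref{lem:prop1+2}, and \ref{lem:prop3}, specialized to stochastic dimension $e=0$.

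One point of context: the paper does not itself prove this deterministic statement --- it is quoted as background from \cite{caravenna}. Caravenna--Zambotti's own proof actually proceeds via mollification (see \eqref{eq:ZorinKranichSequence} in the paper) rather than wavelets; your approach is Hairer's original one from \cite{Hairer}, which is the route this paper adopts and extends. Both are valid, and the paper even remarks that the mollification route would likely also work for the stochastic results.
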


\begin{remark}
They also showed that it is possible to find such a distribution $f$ for $\gamma<0$. However, this distribution is no longer unique. Also, note that for $\gamma<0$, the bound $\epsilon^\gamma$ does explode, so the condition is much weaker. In this article, we will concentrate on the $\gamma>0$ case.
\end{remark}

\noindent At this point, the reconstruction theorem is a purely analytic tool. This is surprising, as it is often applied in SPDEs, see for example  \cite{roughVol}, \cite{brault}, \cite{gPam}, \cite{Hairer}, and many more. In all those examples, the reconstruction theorem only uses the analytic properties of the stochastic processes, neglecting their stochastic properties.

This naturally leads to the question, of whether there exists a stochastic version of the reconstruction theorem, especially if one considers its close connection to the sewing lemma \cite{pradelle}, \cite{gubinelli}: In rough paths theory, rough integrals can be constructed with the sewing lemma (see \cite{roughPaths},\cite{lyons}), whereas in regularity structure theory the reconstruction theorem is used to construct the corresponding products. And in \cite{le}, Khoa Lê showed the existence of a stochastic version of the sewing lemma.

It would be interesting to generalize his techniques to the multidimensional setting of the reconstruction theorem. Let us give two possible applications such a stochastic reconstruction theorem could have:

\begin{itemize}

\item In the paper \cite{roughSDE}, Peter Fritz and Khoa Lê use the stochastic sewing lemma to generate a mixed theory of rough paths and stochastic differential equations to solve equations of the form

\begin{equation*}
dY_t = b(t,Y_t)dt + \sigma(t,Y_t)dB_t + f(t,Y_t)d\mathbb X_t\,,
\end{equation*}

\noindent where $B_t$ is a Brownian motion and $\mathbb X_t$ is a deterministic rough path. This is outside of the scope of both Itô-calculus as well as rough path theory since the construction of the integral $\int f(t,Y_t)d\mathbb X_t$ requires one to understand the integration of a random process against a rough path. One can imagine similar problems for SPDEs, for example, one can look at the multiplicative heat equation with common noise

\begin{equation*}
du-\Delta udt = f(u)d\bar W + g(u) dW\,,
\end{equation*}

\noindent for a two independent Brownian sheets $\bar W$ and $W$ with spatial dimension $d=1$. One usually wants to treat the common noise $dW$ as a deterministic distribution, so it would make sense to treat it with the help of regularity structure theory. However, if one wants to keep $d\bar W$ as a stochastic integral, one would need a mixed theory of SPDEs and regularity structures. While we are not yet able to present such a theory, we hope that our stochastic reconstruction theorem provides the first step toward it.

\item The stochastic sewing lemma has shown tremendously useful in analyzing regularization by noise phenomena \cite{regByNoise}. In recent years, there has been a growing interest in studying such phenomenons in multi-parameter settings, e.g. \cite{waveEquation}. This is currently done with a deterministic multi-parameter sewing lemma, and the authors themselves point out that many more interesting noises like fractional Brownian sheet \cite{fBrownianSheet} will most likely require a stochastic multi-parameter technique. We conjecture that using the stochastic reconstruction theorem instead of the deterministic multi-parameter sewing lemma would allow one to extend the results of \cite{waveEquation} to more general noises.
\end{itemize}

\begin{remark}
We need to point out that since the inception of this paper's ideas, the regularization by noise community has moved towards favoring sewing techniques over reconstruction techniques. A stochastic multi-parameter sewing lemma is being researched as we are writing this paper, and while we believe that there is plenty enough reason to research both reconstruction and sewing techniques, it is at the moment not clear if stochastic reconstruction will hold a general advantage over stochastic multi-parameter sewing in the regularization by noise field, aside from being first.
\end{remark}

In this paper we present such a generalization of Lê's results to the multidimensional setting of the reconstruction theorem: Consider a family of random distributions $(F_x)_{x\in\R^d}$ with $2\le p<\infty$ (see Definition \ref{def:randomGerm}), which is adapted to a filtration $(\mathcal F_y)_{y\in\mathbb R}$ (see Definition \ref{stochDimension}). The main idea of stochastic reconstruction is to enhance the coherence property \eqref{ineq:coherenceIntro} by an additional condition on the conditional expectation of $F_x$ as follows:

\begin{align*}
\norm{(F_x-F_y)(\psi_x^\epsilon)}_{L_p}\lesssim \epsilon^\alpha(\abs{x-y}+\epsilon)^{\gamma-\alpha} \\
\norm{E^{\mathcal F_{y_1}}(F_x-F_y)(\psi_x^\epsilon)}_{L_p}\lesssim \epsilon^\alpha(\abs{x-y}+\epsilon)^{\tilde \gamma-\alpha},
\end{align*}

\noindent where $y = (y_1,\dots,y_d), x = (x_1,\dots,x_d)$ such that $y_1\le x_1$ and $\psi$ has a positive support (i.e. $\supp(\psi)\in [0,\infty)^d$) and for some $\tilde \gamma > \gamma$, so the conditional expectation is better behaved than the germ itself. We then hope, that $(F_x)_{x\in\R^d}$ can ``borrow regularity'' from its conditional expectation.

In the case of stochastic sewing, this was achieved based on a Doob-Meyer decomposition. Khoa Lê managed to borrow up to $\frac 12$ regularity from the conditional expectation of the terms corresponding to $(F_x-F_y)(\psi_x^\epsilon)$, which indicates that we should assume $\tilde\gamma = \gamma+\frac 12$. 

The interesting result of our analysis is that in the multidimensional setting, this technique can be used repeatedly for several directions to gain a higher improvement in regularity. We thus introduce the \emph{stochastic dimension} $e\le d$ of $(F_x)$ as the number of directions, such that for suitable filtrations $\mathcal F_{y_i}^{(i)}$ for $i=1,\dots,e\le d$,

\begin{equation*}
\norm{E^{\mathcal F^{(i)}_{y_i}}(F_x-F_y)(\psi_x^\epsilon)}_{L_p}\lesssim \epsilon^\alpha(\abs{x-y}+\epsilon)^{\tilde \gamma-\alpha}\,.
\end{equation*}

\noindent In this case, we can borrow $\frac 12$ regularity from all of those directions, resulting in $\gamma = \tilde{\gamma}-\frac e2$, an improvement of $\frac e2$ regularity. Thus, we say that $(F_x)$ is stochastically $\gamma$-coherent, if it fulfills certain adaptedness properties (see Definition \ref{stochDimension} for details), and for $i=1,\dots,e\le d$

\begin{align}
\norm{(F_x-F_y)(\psi_x^\epsilon)}_{L_p}\lesssim \epsilon^\alpha(\abs{x-y}+\epsilon)^{\gamma-\frac e2-\alpha} \label{ineq:coherence1Intro}\\
\norm{E^{\mathcal F^{(i)}_{y_i}}(F_x-F_y)(\psi_x^\epsilon)}_{L_p}\lesssim \epsilon^\alpha(\abs{x-y}+\epsilon)^{\gamma-\alpha}.\label{ineq:coherence2Intro}
\end{align}

\noindent Under this condition, we can formulate our main result:

\begin{theorem}[Stochastic reconstruction]\label{theo:StochReconIntro}
Assume the adaptedness condition of Definition \ref{stochDimension} and that $(F_x)_{x\in\R^d}$ is stochastically $\gamma$-coherent for some $\gamma>0$ with stochastic dimension $e\le d$. Then, there exists a unique random distribution $f$, which fulfills

\begin{align*}
\norm{(f-F_x)(\psi_x^\epsilon)}_{L_p}&\lesssim \epsilon^{\gamma-\frac e2}\\
\norm{E^{\mathcal F^{(i)}_{x_i}}(f-F_x)(\psi_x^\epsilon)}_{L_p}&\lesssim \epsilon^{\gamma}
\end{align*}

\noindent for $i=1,\dots,e$.
\end{theorem}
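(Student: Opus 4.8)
The plan is to mimic the multiscale construction of the deterministic reconstruction theorem, but to replace the triangle–inequality bookkeeping over the dyadic lattice by a martingale (Burkholder--Davis--Gundy) argument that harvests the extra regularity encoded in the conditional bound \eqref{ineq:coherence2Intro}. Concretely, I would fix a sufficiently regular, compactly supported wavelet pair $(\varphi,\{\psi^{(j)}\}_j)$ and, writing $\Lambda_n=2^{-n}\mathbb Z^d$ and $\varphi^n_x=2^{nd/2}\varphi(2^n(\cdot-x))$, define the scale-$n$ approximation
$$
f_n:=\sum_{x\in\Lambda_n}F_x(\varphi^n_x)\,\varphi^n_x,
$$
so that each local germ is used to compute its own coefficient at resolution $2^{-n}$. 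The reconstruction should then be $f=\lim_{n\to\infty}f_n$. Using the two–scale relation, the increment $\delta_n:=f_{n+1}-f_n$ can be written as a lattice sum $\delta_n=\sum_{x\in\Lambda_n}\sum_j c^{(j)}_{n,x}\,\psi^{(j),n}_x$ whose coefficients are, up to normalisation, of the form $(F_x-F_{x'})(\psi^{(j),n}_x)$ for neighbouring lattice points $x,x'$; the raw bound \eqref{ineq:coherence1Intro} controls each such coefficient at order $2^{-n(\gamma-e/2)}$.

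If one estimated the reconstruction error $\norm{(f-F_x)(\psi_x^\epsilon)}_{L_p}$ by summing the increments $\delta_n(\psi_x^\epsilon)$ over scales $2^{-n}\le\epsilon$ and applying the triangle inequality over the $\sim(\epsilon 2^n)^d$ lattice points of $\Lambda_n$ meeting $\supp(\psi_x^\epsilon)$, one would only recover reconstruction at the deficient level $\gamma-\frac e2$, which for $e/2\ge\gamma$ is worthless and yields no uniqueness. The gain comes from the stochastic structure. For each fixed scale $n$ I would order the contributing lattice points along the first stochastic coordinate and perform a Doob--Meyer splitting relative to $\mathcal F^{(1)}$: the martingale part $\sum_x\big(c^{(j)}_{n,x}-E^{\mathcal F^{(1)}}c^{(j)}_{n,x}\big)$ is handled by the Burkholder--Davis--Gundy inequality, which replaces the number $\sim\epsilon 2^n$ of terms along that coordinate by its square root and hence gains a factor of one half a scale per direction, while the compensator $\sum_x E^{\mathcal F^{(1)}}c^{(j)}_{n,x}$ is precisely the object controlled at the better order by \eqref{ineq:coherence2Intro}. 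Iterating this decomposition across the remaining stochastic directions $i=2,\dots,e$ converts the deficit back into the two asserted rates: $\gamma-\frac e2$ for the raw $L_p$ quantity and $\gamma$ for each conditional quantity.

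Existence of $f$ as a random distribution then follows from the Cauchy property of $(f_n)$ — which, against a fixed smooth test function, is comparatively routine thanks to the fast decay of wavelet coefficients — the substantive output being the uniform local bounds just described, obtained after comparing an arbitrary mollifier at scale $\epsilon$ with the nearest dyadic scale. For uniqueness I would apply the very same engine to the difference $g=f-\tilde f$ of two candidate reconstructions: here the germs cancel, leaving $\norm{g(\psi_x^\epsilon)}_{L_p}\lesssim\epsilon^{\gamma-e/2}$ and $\norm{E^{\mathcal F^{(i)}_{x_i}}g(\psi_x^\epsilon)}_{L_p}\lesssim\epsilon^{\gamma}$, and the same martingale decomposition upgrades the effective decay of $g$ tested against any fixed test function to order $\epsilon^{\gamma}$ with $\gamma>0$, forcing $g=0$.

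The step I expect to be the main obstacle is the iterated martingale decomposition across the $e$ distinct stochastic directions. Each filtration $(\mathcal F^{(i)}_{y_i})$ resolves only the $i$-th coordinate, and the positivity of $\supp(\psi)$ is what makes the coefficients $c^{(j)}_{n,x}$ adapted in the required sense; but to accumulate a full $\frac e2$ one must peel off one direction at a time and check that conditioning in direction $i$ neither destroys the adaptedness nor spoils the conditional estimate needed to begin direction $i+1$. This is the multi-parameter counterpart of Lê's single Doob--Meyer splitting, and the compatibility of the several filtrations — morally a Cairoli--Walsh type conditional-independence requirement, to be extracted from Definition \ref{stochDimension} — together with the control of the constants proliferating through $e$ successive BDG applications, is the technical heart of the argument.
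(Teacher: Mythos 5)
Your proposal follows essentially the same route as the paper's proof: the same Hairer-style reconstructing sequence $f^{(n)}(\psi)=\sum_{x\in\Delta_n}F_x(\phi_x^n)\scalar{\phi_x^n,\psi}$, the same multiscale splitting into wavelet increments, the same Doob--Meyer plus BDG iteration over the $e$ stochastic directions (with the conditional coherence bound controlling the compensator), and the same uniqueness argument applying that engine to the difference of two candidate reconstructions. You also correctly identify the technical heart — the orthogonality (commuting/conditional-independence) condition on the filtrations needed to iterate the conditioning, plus the adaptedness bookkeeping coming from the positive supports — which is exactly what the paper's key Lemma \ref{lem:stochTechnique} handles (via a shifted coarser-mesh decomposition of $\Delta_n$).
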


\noindent
We further discuss two more variations of Theorem \ref{theo:StochReconIntro}. In the above version of the reconstruction theorem, we assume $\psi$ to have positive support and $x_i\le y_i$ for $i=1,\dots,e$ in the stochastic coherence property. This is motivated by Itô calculus, where one needs to carefully separate between information ``in the future'' and ``in the past''.  In the deterministic theory however, this is not necessary and the classical reconstruction theorem \cite{caravenna}, \cite{Hairer} do not have similar assumptions. We show that as long as one carefully chooses the point $y_i$ in the conditional expectation $E^{\mathcal F^{(i)}_{y_i}}$, one can get rid of the assumptions $x_i\le y_i$ and that $\psi$ has a positive support in Theorem \ref{two-sided stochastic_reconstruction}.

The second variation we are discussing is reconstruction in the covariance setting: If $p=2$, we can replace the additional condition in the coherence property \eqref{ineq:coherence2Intro} with the following property:

\begin{equation}\label{ineq:covCoherenceIntro}
\abs{E\left(\prod_{i=1}^2 (F_{x_i}-F_{y_i})((\psi_i)_{y_i}^{\epsilon_i})\right)} \lesssim \prod_{i=1}^2\epsilon_i^\alpha(\abs{x_i-y_i}+\epsilon_i)^{\gamma-\alpha}\,,
\end{equation}

\noindent
for points $x_1, y_1, x_2, y_2\in\R^d$ such that $\abs{x_1-x_2}, \abs{y_1-y_2}$ are big enough (see Section \ref{subSectionCov} for details), two test functions $\psi_1,\psi_2$ and two parameters $\epsilon_1,\epsilon_2$. While this condition on the covariance of $(F_x-F_y)(\psi_y^\epsilon)$ looks rather technical, it allows one to entirely get rid of the filtrations $\mathcal F^{(i)}_{y_i}$! Indeed, \eqref{ineq:coherence1Intro} and \eqref{ineq:covCoherenceIntro} together suffice to construct a reconstruction of the germ $F_x$ without any adaptability assumptions. To our knowledge, this is a new idea, even in the one-dimensional setting of stochastic sewing. Let us stress that this is no technical detail: The filtrations we use in this paper need to have an orthogonality condition (also often called commuting condition in the literature, see \cite{multiparameterProcesses}, chapter 11 or \cite{walshMultiparameter}). While this is a standard assumption in the literature of multi-parameter martingales, in practice, it can be hard to verify: For example for anisotropic Gaussian fields $X(s)$ \cite{anisotropicGF}, the covariance $E(X(s)X(t))$ is easy to calculate, but the filtrations generated by $(X(t))_{t\in\R^d}$ are not well understood and hard to work with. We conjecture that for noises in this class, the covariance setting will prove to be a useful tool.

Perhaps an even easier example can be found in Section \ref{sectionWN}: In this section, we use stochastic reconstruction to construct the Walsh integral $\int X(s)\xi(ds)$ against white noise. To use Theorem \ref{theo:StochReconIntro}, we would need to make an unusually strong assumption on the adaptedness on $X$: We would need $X(s)$ to be $\mathcal F_{s_i}^{(i)}$-measurable for all $i=1\dots,d$, while it is well known that adaptedness in time, i.e. $X(s)$ is $\mathcal F^{(1)}_{s_1}$-measurable suffices to construct the integral. The stochastic reconstruction in the covariance setting is to our knowledge the easiest way to construct this integral with reconstruction techniques.

\begin{remark}
It should be noted that the stochastic reconstruction theorem in the covariance setting is not strictly stronger than the classical one: In the covariance setting, we make no distinction whether $(x_1)_i\le (x_2)_i$ or $(x_1)_i\ge (x_2)_i$, i.e. we do not differentiate between the ``past'' and ``future''. For white noise, this does not matter since $\xi(1_{[s,t]})$ is both independent of the past before $s$ and the future after $t$. However, if one only expects an improved regularity for conditioning on the past and no improvement for the future, it is highly likely that Theorem \ref{theo:StochReconIntro} is better suited for that situation.
\end{remark}

\noindent Our last main result reads as follows:

\begin{theorem}
Let $p=2$. Assume that $(F_x)_{x\in\R^d}$ is stochastically $\gamma$-coherent for some $\gamma>0$ in the covariance setting, in the sense that \eqref{ineq:coherence1Intro} and \eqref{ineq:covCoherenceIntro} hold. Then there exists a unique distribution, which fulfills

\begin{align*}
\norm{(f-F_{x_1})(\psi_{x_1}^\lambda)}_{L_2}&\lesssim \lambda^{\gamma-\frac E2} \\
\abs{E\left(\prod_{i=1}^2(f-F_{x_i})(\psi_{x_i}^\lambda)\right)}&\lesssim\lambda^{2\gamma}\,.
\end{align*} 
\end{theorem}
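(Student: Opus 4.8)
The plan is to run the same multiscale reconstruction scheme that underlies Theorem~\ref{theo:StochReconIntro}, but to replace the martingale step (Doob--Meyer decomposition followed by orthogonality of the increments), which is unavailable here since we have discarded the filtrations $\mathcal F^{(i)}$, by a direct second-moment estimate fuelled by the covariance bound \eqref{ineq:covCoherenceIntro}. The guiding principle is that \eqref{ineq:covCoherenceIntro} plays exactly the role orthogonality of martingale increments played before: it forces the $L_2$-norm of a sum of many germ increments to grow like the square root of the number of well-separated summands rather than linearly, which is the source of the $\tfrac e2$ gain. Once the limit $f$ is constructed, I expect both displayed estimates to be \emph{inherited} from the hypotheses --- the first, of order $\lambda^{\gamma-\frac e2}$, from the pathwise coherence \eqref{ineq:coherence1Intro} (a single test bump sees no cancellation, so its error stays at the pathwise exponent $\gamma-\tfrac e2$), and the second, of order $\lambda^{2\gamma}$, from \eqref{ineq:covCoherenceIntro} --- so that the genuine content lies in the \emph{convergence} of the scheme, which the pathwise bound alone cannot deliver once $\gamma-\tfrac e2\le 0$, and in \emph{uniqueness}.

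First I would fix a dyadic multiscale telescoping, producing a sequence $f_n$ of approximate reconstructions built from the germs $F_u$ along a grid of mesh $2^{-n}$, exactly as in the proof of Theorem~\ref{theo:StochReconIntro}, and aim to show that $f_n(\varphi)$ is Cauchy in $L_2$ for every fixed test function $\varphi$. The increment $f_{n+1}(\varphi)-f_n(\varphi)$ is a sum over $\sim 2^{nd}$ grid points of terms built from germ differences $F_u-F_v$; rather than the triangle inequality I would expand $\norm{f_{n+1}(\varphi)-f_n(\varphi)}_{L_2}^2$ as a double sum of covariances over pairs of grid points $(u,u')$. Each pair is treated in one of two regimes: when $\abs{u-u'}$ is ``big enough'' in the sense of Section~\ref{subSectionCov} I apply \eqref{ineq:covCoherenceIntro} directly, and when the points are too close for that bound to be admissible I fall back on Cauchy--Schwarz together with \eqref{ineq:coherence1Intro}. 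Counting the grid points in each regime and summing the geometric series in $n$ should yield a summable per-level bound, the near-diagonal concentration of the covariance having recovered the full $\tfrac e2$, so that $f=\lim_n f_n$ exists.

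With $f$ at hand, the first displayed estimate follows by isolating the top scale of the telescoping and invoking \eqref{ineq:coherence1Intro}, while the second follows by applying \eqref{ineq:covCoherenceIntro} to the two reconstruction errors at the separated base points $x_1,x_2$ and passing the bound through the limit. For uniqueness I would note that a difference $h=f-g$ of two reconstructions satisfies $\norm{h(\psi_x^\lambda)}_{L_2}\lesssim\lambda^{\gamma-\frac e2}$ as well as $\abs{E(h(\psi_{x_1}^\lambda)h(\psi_{x_2}^\lambda))}\lesssim\lambda^{2\gamma}$; representing a fixed test function as a scale-$\lambda$ quadrature $\varphi\approx\sum_k c_k\psi_{x_k}^\lambda$ and expanding $E(h(\varphi)^2)$ through the same near-diagonal versus well-separated dichotomy should force $E(h(\varphi)^2)\to0$ as $\lambda\to0$, hence $h=0$.

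The main obstacle is the covariance double-sum of the second step. Because \eqref{ineq:covCoherenceIntro} is only admissible for base points separated by several mollification scales, the bookkeeping must be arranged so that the vast majority of pairs fall into the admissible regime, while the near-diagonal pairs --- controlled only by the cruder Cauchy--Schwarz bound --- remain few enough that their contribution does not erode the $\tfrac e2$ gain. Getting the geometry of this split to close simultaneously across all scales, and verifying that the resulting exponent is exactly $\gamma$ rather than something smaller, is the delicate point: it is the covariance analogue of the orthogonality estimate behind Theorem~\ref{theo:StochReconIntro}, but more subtle because the cancellation is now only quantitative, encoded in the decay in $\abs{u-u'}$, rather than exact.
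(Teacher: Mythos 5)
Your existence argument is essentially the paper's. The paper runs the same reconstructing sequence $f^{(n)}$ and telescoping decomposition as in Theorem \ref{stochastic_reconstruction}, and replaces the BDG/martingale step by exactly the second-moment expansion you describe: the squared $L_2$ norm of the problematic sum is written as diagonal terms (bounded by Cauchy--Schwarz and \eqref{coherenceCov1}) plus off-diagonal covariances (bounded by \eqref{coherenceCov2}), and the admissibility issue --- the covariance bound only applying to well-separated pairs --- is handled by splitting $\Delta_n$ into finitely many shifted coarser meshes $\Delta_n(r_n^{(0)})$, so that within each sub-mesh all distinct points have disjoint effective stochastic supports; since only a constant number of neighbours ever violate the separation, this costs only a constant factor. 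The two displayed estimates for $f$ are then inherited exactly as you predict.

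The genuine gap is in your uniqueness step. You assert that $h=f-g$ satisfies $\abs{E\left(h(\psi_{x_1}^\lambda)h(\psi_{x_2}^\lambda)\right)}\lesssim\lambda^{2\gamma}$, but this does not follow from the hypotheses. Expanding the product gives four terms; the two ``pure'' terms are controlled by applying \eqref{uniqueCov2} to $f$ alone and to $g$ alone, but the mixed terms $E\left((f-F_{x_1})(\psi_{x_1}^\lambda)\,(g-F_{x_2})(\psi_{x_2}^\lambda)\right)$ are not: the covariance estimate only concerns the error of a \emph{single} reconstruction at two points, and Cauchy--Schwarz on the mixed terms merely gives $\lambda^{2\gamma-E}$, which does not vanish (and is useless precisely in the interesting regime $\gamma\le E/2$ where pathwise reconstruction fails). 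The paper flags exactly this obstruction and takes a different route (Lemma \ref{lem:uniqueCov}): it shows that \emph{any} $f$ satisfying \eqref{uniqueCov1} and \eqref{uniqueCov2} must satisfy $f(\psi)=\lim_n f^{(n)}(\psi)$ in $L_2$ for $\psi\in C_c^q$, by writing $f(P_n\psi)-f^{(n)}(\psi)=\sum_{x\in\Delta_n}(f-F_x)(\phi_x^n)\scalar{\phi_x^n,\psi}$ and expanding the square --- diagonal terms via \eqref{uniqueCov1}, cross terms via \eqref{uniqueCov2}, again with the sub-mesh splitting --- so that only bounds on the single distribution $f$ are ever used, and $f$ is pinned to the canonical limit; density of $C_c^q$ in $C_c^r$ finishes. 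Your quadrature idea $\varphi\approx\sum_k c_k\psi_{x_k}^\lambda$ runs into the same mixed-term obstruction (it compares $f$ against $g$ rather than each against the explicit sequence $f^{(n)}$), so as written the uniqueness part of your plan would fail; replacing the quadrature by the wavelet projection $P_n$ and comparing each candidate to $f^{(n)}$ is what makes the argument close.
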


\subsection{The structure of this paper}

This paper is structured as follows: In Section \ref{prelim}, we will formally introduce distribution and scalings. The proofs of our main results make heavy use of wavelet techniques as well as a BDG-type inequality, but since they are not necessary for the presentation of the results themselves, we moved them to the appendix. Both of these parts mainly follow \cite{Hairer} and \cite{meyer}.

Section \ref{mainResult} will introduce the stochastic reconstruction theorem in Theorem \ref{stochastic_reconstruction} as well as the two advertised modifications in Theorem \ref{two-sided stochastic_reconstruction} and Theorem \ref{stochastic_recon_covariance}. The section focuses on presenting the results themselves, while the proofs can be found in Section \ref{sec:proofs}.

In Section \ref{sectionSewing}, we show that in one dimension the stochastic reconstruction theorem becomes the stochastic sewing lemma under weak additional assumptions needed for the distributional framework.

The last two sections of this paper are dedicated to showing simple examples, of how the stochastic reconstruction theorem could be applied. In Section \ref{sectionGMM}, we consider a Gaussian martingale measure $W_t(A)$ and a stochastic process $X(t,x)$, and show how the Walsh type integral \cite{walshOriginal}

\begin{equation*}
\int_0^\infty \int_{\R^d} X(s,x) W(ds,dx)
\end{equation*}

\noindent can be reconstructed as a product between the process $X$ and the distribution $W(ds, dx)$. This closely resembles the construction at the very start of this paper and shows how stochastic integration can be seen as a stochastic version of the Young product.

Since the above integral only assumes martingale properties in one dimension, this case only yields stochastic dimension $e=1$. To give an example with $e>1$, we show a similar reconstruction for integration against white noise in Section \ref{sectionWN} and reconstruct the integral

\begin{equation*}
\int_{\R^d} X(z)\xi(dz),
\end{equation*}

\noindent which can be found in \cite{KPZ}.

\section{Preliminaries}\label{prelim}

\subsection{Distributions and Hölder continuity}

A property that is typical for the theory of SPDEs is that most solutions are too irregular to be viewed as functions and need to be seen as distributions. Let us quickly introduce those: Let $C_c^\infty$ be the space of smooth, compactly supported test functions, and let $C_c^r$ be the space of compactly supported, $r$ times continuously differentiable functions. We equip those spaces with the following $C_c^r$ norms: For any multiindex $k=(k_1,\dots,k_n)$ and $k_1+\dots+k_n$ times continuously differentiable function $\psi$, let $\psi^{(k)} := \frac{\partial^{k_1+\dots+k_n}}{\partial x_1^{k_1}\dots\partial x_d^{k_d}}\psi$ be a partial derivative of $\psi$. Then, for some $r\in\mathbb N$, we set

\begin{equation*}
\norm{\psi}_{C_c^r} = \sum_{k_1+\dots+k_d\le r} \norm{\psi^{(k)}}_\infty.
\end{equation*}

\noindent
A distribution on this space is defined as follows:

\begin{definition}

For any $r\in\mathbb N$, a \emph{distribution} (of order at most $r$) is a linear map $f:C_c^r \to\R$, such that for any compact set $K$ there is a constant $C(K)$, such that for any $\psi\in C_c^r$ with support in $K$, it holds that

\begin{equation*}
\abs{f(\psi)}\le C(K)\norm{\psi}_{C_c^r}.
\end{equation*}

\noindent Further, for any $p<\infty$, we call a linear map $f:C_c^r\to L_p(\Omega)$ a \emph{random distribution}, if it fulfills 

\begin{equation*}
\norm{f(\psi)}_{L_p}\le C(K)\norm{\psi}_{C_c^r}.
\end{equation*}
\end{definition}

\noindent We use the notation $\abs{f(\psi)}\lesssim \norm{\psi}_{C_c^r}$, where we allow constants hidden in $\lesssim$ to depend on underlying compact sets.

\noindent
We need to extend the notion of Hölder continuity to distributions. This requires us to measure the local behavior of distributions, which can be achieved with the help of localizations of test functions. To this end, we call the family of maps $(S^\lambda)_{\lambda\ge 0}$ a \emph{scaling} associated to the vector $s\in\mathbb R^d$ with $s_i>0$ for all $i=1,\dots,d$, if it is given by

\begin{align*}
S^\lambda :\mathbb R^d&\to\R^d \\
(x_1,\dots,x_d)&\mapsto (\lambda^{s_1} x_1,\dots, \lambda^{s_d} x_d).
\end{align*}

\noindent The rank of $S$ is given by $\abs{S} = \abs{s} := s_1+\dots+s_d$. If $\R^d$ has a scaling $S$ assigned to it, we also equip the space with the seminorm

\begin{equation*}
\abs{x} = \abs{x_1}^{\frac 1{s_1}}+\dots+\abs{x_d}^{\frac 1{s_d}},
\end{equation*}

\noindent which is often referred to as a homogeneous norm, although it is  not a norm in the strict sense. Note that it has the property, that $\abs{S^\lambda x} = \lambda\abs{x}$.

To give two short examples, the canonical scaling $x\mapsto \lambda x$ is simply given by the map $S$ associated to $s=(1,\dots,1)$, while the parabolic scaling is associated to $s=(2,1,\dots,1)$.

With this in mind, we can now take a look at the localization of a test function: Let $\psi\in C_c^r$ be a test function, $x\in\R^d$ and $\lambda\in(0,1)$. Then, the \emph{localized} test function $\psi_x^\lambda$ (under the scaling $S$) is given by

\begin{equation*}
\psi_x^\lambda(y) := \frac 1{\lambda^{\abs{S}}} \psi\left(S^{\frac 1\lambda}(y-x)\right).
\end{equation*}

\noindent In the case where $S$ is the canonical scaling associated to $(1,\dots,1)$, this is simply given by the classical notion of localized test functions:

\begin{equation*}
\psi_x^\lambda(y) :=\frac 1{\lambda^d}\psi\left(\frac{y-x}{\lambda}\right).
\end{equation*}

\noindent The localization has the following properties:

\begin{itemize}
\item Assume $\supp(\psi)\subset[-C,C]^d$. Then the support of $\psi^\lambda_x$ lies in $[x-\lambda^{s_1} C,x+\lambda^{s_1} C]\times\dots\times [x-\lambda^{s_d} C,x+\lambda^{s_d} C]$, and
\item it holds that $\int_{\R^d}\abs{\psi(y)}dy = \int_{\R^d}\abs{\psi_x^\lambda(y)} dy$.
\end{itemize}

\noindent These properties justify thinking of $\psi_x^\lambda$ as the density of a measure on $\R^d$, which is highly concentrated around $x$ for small $\lambda$. Indeed, as $\lambda\to 0$, this converges to the Dirac measure $\delta_x$: For all continuous functions $f:\R^d\to \R$

\begin{equation}\label{dirac}
\scalar{f,\psi_x^\lambda}\xrightarrow{\lambda\to 0} f(x),
\end{equation}

\noindent where $\scalar{f,g} = \int_{\R^d} f(x) g(x) dx$ is simply the $L_2(\R^d)$ scalar product.

For a true distribution $g$ (i.e. there is no function $\tilde g$, such that $g(\psi) = \scalar{\tilde g,\psi}$ for all $\psi\in C_c^r$), $g(\psi_x^\lambda)$ is bound to diverge as $\lambda\to 0$. (If it were to converge, we could set $\tilde g(x) = g(\delta_x) = \lim_{\lambda\to 0}g(\psi_x^\lambda)$) The speed of divergence is going to give us a way to define the regularity of $g$, which will help us to define Hölder spaces of negative regularity. 

Let us first introduce Hölder continuity for positive regularities: For an $\alpha\in (0,1]$, and a compact set $K\subset \R^d$, let

\begin{equation*}
C^{\alpha}(K) := \{f\in C^0(\R^d)~|~\exists C>0\forall x,y\in K: \abs{f(x)-f(y)}\le C\abs{x-y}^\alpha\},
\end{equation*}

\noindent and we call $C^\alpha := \bigcap_{K\subset \R^d} C^{\alpha}(K)$ the set of (locally) $\alpha$-Hölder continuous functions, where the intersection is taken over all compact sets $K$ in $\R^d$. We also refer to these spaces simply as the space of Hölder-continuous functions (and drop the ``locally''), as every function and distribution in this paper will be only locally Hölder continuous. In consistence with the $\lesssim$ notation, we write $\abs{f(x)-f(y)}\lesssim\abs{x-y}^\alpha$ for $f\in C^\alpha$.

For random processes, we define the space of $\alpha$-Hölder continuous processes by

\begin{equation*}
C^\alpha(L_p(\Omega),K) := \{X:K\to L_P(\Omega)~\vert~ \norm{X(x)-X(y)}_{L_p}\lesssim \abs{x-y}^\alpha\}\,,
\end{equation*}

\noindent where $C^\alpha(L_p)$ is defined similarly to before.

As mentioned above, for $\alpha<0$, we measure the Hölder regularity of a distribution $f$ by measuring the speed of divergence of $f(\psi_x^\lambda)$ for $\lambda\to 0$. More specifically, we define the space $C^\alpha$ for negative $\alpha$ as follows:

\begin{definition}

Let $\alpha<0$, and $r\in\mathbb N$ with $r-1 \le \abs\alpha < r$. A distribution $f:C_c^r\rightarrow\R$ is in the space $C^\alpha$, if for any compact set $K\subset\R^d$, there is some constant $C(K)$, such that for every distribution $\psi$ with $\norm{\psi}_{C_c^r}=1$ and compact support in $[-1,1]^d$ and for all $x\in K$ the following holds:

\begin{equation}\label{ineq:Calpha}
\abs{f(\psi_x^\lambda)}\le C(K) \lambda^\alpha.
\end{equation}

\noindent For a random distribution, we say that $f\in C^\alpha(L_p(\Omega))$, if it holds that

\begin{equation}\label{ineq:CalphaRand}
\norm{f(\psi_x^\lambda)}_{L_p}\le C(K) \lambda^\alpha.
\end{equation}
\end{definition}

\begin{remark}\label{rem:allDistAreCalpha}
Note that any random $C_c^r$-distribution is in $C^{-\abs S-r}(L_p)$, since $\norm{\psi_x^\lambda}_{C_c^r}\lesssim \lambda^{-\abs S-r}\norm{\psi}_{C_c^r}$.
\end{remark}

\begin{remark}
In general, for a random distribution $f$ and any $\omega\in\Omega$, $\psi\mapsto f(\psi)(\omega)$ is not a distribution. However, it is possible to show a version of Kolmogorov's continuity theorem for negative Hölder spaces: Given an $f\in C^{\alpha}(L_p)$, it holds that for almost all $\omega\in\Omega$, $f(\cdot,\omega)\in C^{\hat\alpha}(L_p)$ for a small enough $\hat\alpha<\alpha$. By the previous remark, it follows that any random distribution $f$ is a.s. a deterministic distribution on some space $C_c^{\hat r}$ for a large enough $\hat r$. A detailed statement of this can be found in the appendix, Lemma \ref{lem:Kolmogorov}.
\end{remark}

\section{Main results}\label{mainResult}

In this section, we are going to present our main result, the stochastic reconstruction theorem, as well as two variations of it. Since the proofs are rather technical, we postpone them to a separate section.

\subsection{Stochastic Reconstruction}

Let us start by rigorously defining the notion of random germ:

\begin{definition}\label{def:randomGerm}
We call $(F_z)_{z\in\R^d}$ a \emph{random germ}, if for all $z\in\R^d$, $F_z:C_c^r\to L_p(\Omega)$ is a continuous, linear map for some fixed number $1\le p <\infty$.
\end{definition}

\noindent Recall that Martin Hairer showed in \cite{Hairer} that the reconstructing sequence

\begin{equation}\label{sequence1}
f_n(\psi) := \sum_{y\in\Delta_n}F_y(\phi_y^n)\scalar{\phi_y^n,\psi}
\end{equation}

\noindent for a wavelet function $\phi$ (see the appendix for details) converges to a limiting distribution, whenever the germ $(F_z)_{z\in\mathbb R^d}$ fulfills a property called $\gamma$-coherence for a $\gamma > 0$. (Note that Hairer did not namely mention the coherence property. It was introduced by Caravenna and Zambotti in \cite{caravenna}. However, Hairer's proof works for \eqref{sequence1} under the coherence assumption of Caravenna-Zambotti.)

\noindent If the germ $(F_x)_{x\in\R^d}$ is a random germ, it is very reasonable to assume random cancellations in the above sum and thus convergence for certain $\gamma<0$, if the germ has some form of martingale properties. As a motivation, we want to start with a short example: Consider space-time white noise $\xi(t,x)$ (see Section \ref{sectionWN} for a formal introduction) in $1+1$ dimension and an $\alpha$-Hölder continuous deterministic function $X\in C^\alpha(\R^2)$ for some $\alpha>0$. We show in Section \ref{sectionWN} that, for the germ $(F_{(t,x)})_{(t,x)\in\R^2}$ defined by

\begin{equation}\label{whiteNoise}
F_{(t,x)}(\psi) := X(t,x)\xi(\psi) = X(t,x) \int\int \psi(s,y)\xi(s,y) dsdy,
\end{equation}

\noindent the reconstructing sequence $f_n(\psi)$ converges in $L_p(\Omega)$ for $2\le p<\infty$ to

\begin{equation*}
f_n(\psi) \xrightarrow{n\to\infty} \int\int X(s,y)\psi(s,y)\xi(s,y) dsdy,
\end{equation*}

\noindent where the right-hand side is a Walsh-type integral against white noise, see \cite{walshOriginal} for reference. It is also not hard to see, that $F_{(t,x)}$ is only $(-1+\alpha)^-$-coherent (which reflects $\xi$ having regularity $(-1)^-$, if we use the canonical scaling $s = (1,1)$, and $X$ having regularity $\alpha$). So this example shows a reconstruction with a $\gamma>-1$, an improvement of $+1$ over classic reconstruction. If one compares this to the stochastic sewing lemma in which one usually gets an improvement of $+\frac 12$, it seems as if one can apply the sewing lemma twice in the above construction. This makes a lot of sense, as the white noise has martingale properties in both space and time directions. 

However, since we also wish to cover more general noises, which might not have these properties in all directions (e.g. white noise in time, colored noise in space), we will split our dimensions $1,\dots,d$ into two types of directions: Those, in which we have certain stochastic properties, and those, in which we lack such properties. We call the number of directions, which have such properties the \emph{stochastic dimension} of the germ.

Before we formally introduce this concept, let us make two quick remarks:

\begin{remark}
The above construction also works for a stochastic process $X(t,x)$, which is adapted to the sigma-algebra $\mathcal F_t$ defined in \eqref{filtrationExample}, i.e. $X(t,x)$ is $\mathcal F_t$ measurable for all $(x,t)\in\R^2$, which is $\alpha$-Hölder continuous in the following sense:

\begin{equation*}
\norm{X(t,x)-X(s,y)}_{L_p}\lesssim \abs{(t
,x)-(s,y)}^\alpha.
\end{equation*}
\end{remark}

\begin{remark}

It should be noted that for the above germ, Caravenna-Zambotti's reconstructing sequence given by

\begin{equation}\label{eq:ZorinKranichSequence}
f_n(\psi) = \int_{\R^d} F_z(\rho_z^{\epsilon_n})\psi(z)dz
\end{equation}

\noindent converges to the same limit for deterministic functions $X$. To see this, note that dominated convergence and the Itô-isometry (\cite{KPZ}, equation (121)) imply $f_n(\psi) = \int_{\R^d} (\rho^{\epsilon_n}(-\cdot)*(X\psi))(y)\xi(y) dy$. One then shows $\rho^{\epsilon_n}(-\cdot)*(X\psi)\xrightarrow{\epsilon_n\to 0}X\psi$ in $L_2(\R^d)$. Using again the Itô-isometry, we get that $f_n(\psi)$ converges to $\int_{\R^d} X(y)\psi(y)\xi(y) dy$ in $L_2$. 

Because of this, we suspect that the main result of this paper can also be achieved using their mollification techniques.
\end{remark}

Let us fix the stochastic setting for reconstruction: Using our example above as a motivation, we see that $(F_{(t,x)})_{(t,x)\in\R^2}$ is a random germ over the probability space $(\Omega,\mathcal F,P)$, where $\mathcal F = \sigma(\xi(\psi)|\psi\in L_2(\R^2))$ is the sigma-algebra generated by white noise. This sigma-algebra comes with a natural filtration, which one can write informally as ``$\mathcal F_t = \{\xi(s,x)|s\le t\}$'', but is more formally given by

\begin{equation}\label{filtrationExample}
\mathcal F_t = \sigma(\xi(\psi)\vert \supp(\psi)\subset (-\infty,t]\times\R).
\end{equation}

\noindent Let now $t\in\mathbb R$ and $\psi$ with a support in $(-\infty,s]\times\R$ be fixed. It then holds that $\xi(\psi)$ is $\mathcal F_s$-measurable and we assume that $X(\cdot,x)$ is adapted to $(\mathcal F_t)_{t\in\R}$, i.e. $X(t,x)$ is $\mathcal F_t$-measurable for all $x\in\R$. It immediately follows, that $F_{t,x}(\psi) = X(t,x)\xi(\psi)$ is $\mathcal F_{\max(s,t)}$-measurable.

Of course, a similar construction can be made in the spatial direction $x$ with respect to the filtration $\mathcal F_x^{\text{spatial}} := \sigma(\xi(\psi)\vert \supp(\psi)\subset\R\times(-\infty,x])$. This gives us two different filtrations, and our germ is adapted to both in the sense that for $t,x\in\R$ and $\psi$ with support in $(-\infty,s]\times (-\infty,y]$, $F_{t,x}(\psi)$ is $\mathcal F_{\max(s,t)}$ and $\mathcal F_{\max(x,y)}^{\text{spatial}}$-measurable, as long as $X$ is adapted to both filtrations.

We want to generalize this adaptedness property: Let $(\Omega,\mathcal F,P)$ be a complete probability space. As in the example above, consider $e$ many filtrations $(\mathcal F_t^{(i)})_{t\in\R}$, $i=1,\dots,e$. We make the following assumptions about these filtrations:

\begin{itemize}
\item {\bf Completeness:} We assume, that for each $i=1,\dots, e$, $t\in\R$, $\mathcal F^{(i)}_t$ is complete.

\item {\bf Right-continuity:} We assume, that for $i=1,\dots, e$, $(\mathcal F_t^{(i)})_{t\in\R}$ is a right continuous filtration.

\item {\bf Orthogonality:} We assume that conditioning in one direction does not change measurability along the other directions. More precisely, let $X$ be a $\mathcal F_{z_i}^{(i)}$-measurable random variable for some $1\le i\le e$ and $z_i\in\R$. We assume that $E^{\mathcal F_{z_j}^{(j)}}X$ is still $\mathcal F_{z_i}^{(i)}$-measurable for all $1\le j\le e, j\neq i$ and all $z_j\in\R$.
\end{itemize}

\noindent Let us give a short intuition, of what we need each of these properties for. The reconstructing sequence \eqref{sequence1} will be a limit in $L_p$, so completeness is necessary to ensure that the limit, as well as modifications of the limit, are still $\mathcal F_t^{(i)}$ measurable for all $i$ and large enough $t$. A closer look at the reconstructing sequence reveals that $f_n(\psi)$ is only $\mathcal F^{(i)}_{y_i+\epsilon_n}$-measurable for $\psi$ with support in $(-\infty,y_1]\times\dots\times(-\infty,y_d]$ and a certain sequence $\epsilon_n\xrightarrow{n\to\infty} 0$. Right continuity then assures that the limit is $\mathcal F^{(i)}_{y_i}$-measurable.

Orthogonality is used more subtly: The proof requires us to use the BDG-type inequality \eqref{BDG} inductively over all stochastic directions. However, without any assumption on how the filtrations $(\mathcal F_{x_i}^{(i)})_{x_i}$, $i=1,\dots,e$ interact with each other, $E^{\mathcal F_{x_i}^{(i)}}F_{x}(\psi)$ will not have any adaptedness property with regards to the filtrations $(\mathcal F_{x_j}^{(j)})_{x_j}$ for $i\neq j$. So orthogonality ensures that we can use \eqref{BDG} several times with respect to different filtrations.

It should also be noted that if $e=1$, orthogonality is automatically fulfilled as one only has one filtration.

\begin{remark}
Another way of formulating orthogonality is requiring that conditioning on $\mathcal F^{(i)}$ and $\mathcal F^{(j)}$ commutes, i.e. for all $1\le i,j\le e$ with $i\neq j$, $z_i,z_j\in \R$ and for a random variable $X$, it holds that $E^{\mathcal F_{s_j}^{(j)}} E^{\mathcal F_{s_i}^{(i)}} X = E^{\mathcal F_{s_i}^{(i)}}E^{\mathcal F_{s_j}^{(j)}} X$.
\end{remark}

\begin{remark}\label{rem:condIndependence}
This setting can be seen as a generalization of the setting used in  \cite{ImkellerNParameterMart}, \cite{walshMultiparameter} to construct multiparameter martingales: The authors start from a multiparameter filtration $(\mathcal F_x)_{x\in\R^d}$ and constructs \[\mathcal F_y^{(i)} := \sigma\left(\bigcup_{x_1,\dots,x_{i-1},x_{i+1},\dots,x_d\in\R} \mathcal F_{(x_1,\dots,x_{i-1},y,x_{i+1},\dots,x_d)}\right)\,.\] In this case, orthogonality is equivalent to conditional independence of $\mathcal F_y^{(i)}$, $i=1\dots,d$, see hypothesis (F4) of \cite{walshMultiparameter} on page 348. We prefer to directly work with the filtrations $(\mathcal F_x^{(i)})$, as our proofs do not use any additional structure of the multiparameter setting.
\end{remark}

\begin{remark}\label{rem:notFree}
We want to mention that the condition of orthogonality does not come for free: While it is a standard assumption in multiparameter martingale theory, e.g. the noises constructed in \cite{anisotropicGF} do not generate such filtrations. For a rule of thumb, Brownian sheet and deterministic kernel integrated over Brownian sheet $B^H_t := \int_{(-\infty,t]^d} K(t,s) dB_s$ like fractional Brownian sheet (see \cite{fBrownianSheet} for the definition of $B^H$ in dimension $d>1$ and \cite{representationFormulasFBS} for the kernel $K$) will be adapted to orthogonal filtrations, while more general Gaussian noises like the spectral density construction of \cite{anisotropicGF} will not generate such filtrations. Theorem \ref{stochastic_recon_covariance} is better suited to deal with those cases.
\end{remark}

\noindent We denote by $\pi_i$ the projection onto the $i$-th coordinate. With this notation, we can now define the term stochastic dimension: 

\begin{definition}\label{stochDimension}
Let $(F_x)_{x\in\R^d}$ be a random germ in $d$ dimensions, and let $e\le d$. Let $ (\mathcal F^{(i)}_z)_{z\in\R}, i=1,\dots,e$ be filtrations of some $\sigma$-Algebra $\mathcal F$, which fulfill the above three properties of completeness, right-continuity, and orthogonality.

\noindent
We say that $(F_x)$ is of \emph{stochastic dimension} $e$, if for $i=1,\dots,e$:

\noindent
For any test function $\psi$ with support $\supp(\psi)\subset \R^{i-1}\times(-\infty,y]\times\R^{d-i}$, it holds that 

\begin{equation}\label{measurability}
F_x(\psi) \text{ is } \mathcal F^{(i)}_{\max(\pi_i(x),y)}\text{-measurable}.
\end{equation}

\noindent We further say that a random distribution $f$ has stochastic dimension $e$, if for any test function $\psi$ with support $\supp(\psi)\subset \R^{i-1}\times(-\infty,y]\times\R^{d-i}$

\begin{equation*}
f(\psi)\text{ is } \mathcal F^{(i)}_{y}\text{-measurable}.
\end{equation*}
\end{definition}

\begin{remark}
As most readers have undoubtedly noticed, we use a two-directional setting in the sense that we always use $t\in\R$ instead of $t\in[0,\infty)$. In one dimension, our example \eqref{whiteNoise} becomes a two-sided Brownian motion, not a classical one.

Since this whole theory highly depends on recentering and scaling operations, the origin point does not have any special role, which makes this two-directional setting more natural. However, one can use the locality of this theory to achieve a one-sided setting, by restricting oneself to compact sets $K \subset [0,\infty)\times\R^{d-1}$ in Definition \ref{defCoherence} and Theorem \ref{stochastic_reconstruction}. 
\end{remark}

\noindent Since we work with a scaling $S$ associated to some $s\in\R^d_+$, we do not consider $e$ to be the number of stochastic dimensions, but rather consider

\begin{equation*}
E := \sum_{i=1}^e s_i\le \abs S.
\end{equation*}

\noindent With the definition of the stochastic dimension in mind, we can now formulate a stochastic version of the coherence property. Since we aim to get $L_p$-convergence, let us fix now some $2\le p <\infty$ and define stochastic coherence as follows:

\begin{definition}\label{defCoherence}
Let $(F_x)_{x\in\R^d}$ be a random germ with stochastic dimension $e\le d$ and let $E$ be as above. We call $(F_x)$ \emph{stochastically $\gamma$-coherent}, if there is an $\alpha \le \gamma$, such that the following holds for all test functions $\psi\in C_c^r(\R^d)$ with $\norm{\psi}_{C_c^r} = 1$ and $\supp(\psi)\subset [0,\tilde R]^e\times [-\tilde R,\tilde R]^{d-e}$ for some $\tilde R>0$, for the stochastic directions $i=1,\dots,e$, for any compact set $K\subset\R^d$ and for any $x,y\in K$ with $\pi_i(x)\le \pi_i(y)$:

\begin{align}
\norm{E^{\mathcal F^{(i)}_{\pi_i(x)}}(F_x-F_y)(\psi_y^\epsilon)}_{L_p} &\lesssim \epsilon^{\alpha}(\abs{x-y}+\epsilon)^{\gamma-\alpha} \label{coherence2}\\
\norm{(F_x-F_y)(\psi_y^\epsilon)}_{L_p} &\lesssim \epsilon^{\alpha}(\abs{x-y}+\epsilon)^{\gamma-\frac E2-\alpha},\label{coherence}
\end{align}

\noindent where the constant appearing in $\lesssim$ is allowed to depend on the compact set $K$ and the radius $\tilde R$.

\end{definition}

\noindent We sometimes write $(\gamma,\alpha)-coherent$, if we need the explicit value of $\alpha$. Let us stress that the constant appearing in $\lesssim$ (both in the above definition and the results throughout this paper) should not depend on $\psi$, as long as $\tilde R$ and $\norm{\psi}_{C_c^r}$ are fixed.

\begin{remark}
In the usual construction of stochastic integrals, it is far more natural to use left-side approximations than two-sided ones. Thus, \eqref{whiteNoise} is only a useful approximation, if the support of $\psi$ is in $[t,\infty)\times[x,\infty)$. We therefore only consider test functions with positive support in the stochastic directions in the definition above and Theorem \ref{stochastic_reconstruction}. It is however possible to get a two-sided stochastic reconstruction, which will be discussed in Section \ref{twosided}. 
\end{remark}

\begin{remark}
It is possible to allow $\alpha = \alpha(K)$ to depend on the compact set $K$. In this case, one simply has to replace $\alpha$ with $\alpha(\tilde K)$ in the proofs, for some fattening $\tilde K$ of $K$ which depends on the support of the wavelet.
\end{remark}

\noindent Note that each stochastic dimension reduces the coherence required by $(F_x)$ in \eqref{coherence} by $\frac 12$, resulting in a total reduction of $\frac E2$, under the corresponding scaling. In particular, if the scaling is just the canonical scaling $s=(1,\dots,1)$, we get a reduction by $\frac e2$. Further note that the support of $\psi_y^\epsilon$ lies completely on the right-hand side of the time we condition on, $\pi_i(x)$, with respect to the direction $i$.

\noindent Recall that a random distribution $\tilde f$ is a modification of $f$, if for all test functions $\psi\in C_c^r$, $f(\psi) = \tilde f(\psi)$ almost surely. With this in mind, we can formulate our main result:

\begin{theorem}[Stochastic Reconstruction]\label{stochastic_reconstruction}
Let $(F_x)_{x\in\R^d}$ be a random germ for some $2\le p<\infty$ with stochastic dimension $e\le d$, which is stochastically $(\gamma,\alpha)$-coherent for some $\gamma > 0$ and $\alpha<0$. Set $\tilde r := r\cdot\min_{i=1,\dots,d} s_i$. Assume that our parameters fulfill $\alpha+\tilde r > 0$ and $\gamma-\frac E2+\tilde r > 0$.

Then, there is a unique random distribution $f$ (up to modifications) with stochastic dimension $e$ with regard to the same filtrations $\mathcal F^{(i)}$ as our germ, such that the following holds for $i=1,\dots,e$, any test function $\psi\in C_c^r$ with $\norm{\psi}_{C_c^r}=1$ and whose support is in $\supp(\psi)\subset[\tilde C,\tilde R]^e\times[-\tilde R,\tilde R]^{d-e}$ for some $0\le\tilde C\le \tilde R$ and $\lambda\in(0,1]$:

\begin{align}
\norm{(f-F_x)(\psi_x^\lambda)}_{L_p}&\lesssim \lambda^{\gamma-\frac E2} \label{unique1}\\
\norm{E^{\mathcal F^{(i)}_{\pi_i(x)}}(f-F_x)(\psi_x^\lambda)}_{L_p}&\lesssim \lambda^\gamma,\label{unique2}
\end{align} 

\noindent where the constant in $\lesssim$ again depends on the compact set $K\subset \R^d$ with $x\in K$ and the constants $\tilde C, \tilde R$.
\end{theorem}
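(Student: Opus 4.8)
\emph{Construction via wavelets.} The plan is to adapt Hairer's wavelet reconstruction to the stochastic setting, replacing the deterministic summation behind the reconstructing sequence \eqref{sequence1} by an iterated martingale estimate. First I would fix a compactly supported, sufficiently regular orthonormal wavelet basis adapted to the scaling $S$ (a tensorized Daubechies basis, shifted so that the scaling function $\phi$ has support in $[0,\infty)$ along each stochastic direction $i=1,\dots,e$), with regularity chosen so that the hypotheses $\alpha+\tilde r>0$ and $\gamma-\tfrac E2+\tilde r>0$ make the relevant pairings and off-diagonal sums converge. Writing $\Lambda_n$ for the scale-$2^{-n}$ lattice and $f_n(\psi):=\sum_{y\in\Lambda_n}F_y(\phi_y^n)\scalar{\phi_y^n,\psi}$, the positive support ensures via \eqref{measurability} that $F_y(\phi_y^n)$ is $\mathcal F^{(i)}_{\pi_i(y)+c2^{-ns_i}}$-measurable while $E^{\mathcal F^{(i)}_{\pi_i(y)}}F_y(\phi_y^n)$ is controlled by \eqref{coherence2}; this is exactly the ``new information in the future'' structure a martingale argument needs. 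Using the two-scale refinement relation and the finitely supported refinement coefficients, I would rewrite the increment $f_{n+1}-f_n$ as a sum over $\Lambda_{n+1}$ of germ differences $(F_x-F_{y(x)})$ with $\abs{x-y(x)}\lesssim 2^{-n}$, tested against scale-$(n+1)$ functions, as in the deterministic proof.

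\emph{The core estimate (iterated BDG).} The heart of the argument is a single estimate, reused for both existence and uniqueness: for a family $(G_y)_{y\in\Lambda_n}$ adapted in the sense of \eqref{measurability} and satisfying $\norm{G_y}_{L_p}\lesssim 2^{-n\abs S/2}2^{-n(\gamma-E/2)}$ together with $\norm{E^{\mathcal F^{(i)}_{\pi_i(y)}}G_y}_{L_p}\lesssim 2^{-n\abs S/2}2^{-n\gamma}$ for $i=1,\dots,e$, the weighted sum $\sum_y G_y\scalar{\phi_y^{n},\psi}$ has $L_p$-norm $\lesssim 2^{-n\gamma}$ for fixed $\psi$ (and $\lesssim \lambda^{-E/2}2^{-n\gamma}$ for $\psi=\psi_x^\lambda$). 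I would prove this by peeling off the stochastic directions one at a time with the BDG-type inequality \eqref{BDG}: along direction $i$ the martingale part replaces the factor $2^{ns_i}$ (the number of lattice points) by its square root $2^{ns_i/2}$ at the cost of using the weaker bound \eqref{coherence}, while the conditional remainder carries no gain but is controlled by the stronger bound \eqref{coherence2}. Orthogonality guarantees that conditioning in direction $i$ preserves the adaptedness needed to continue the iteration in a direction $j\neq i$. Tracking exponents, each of the resulting branches balances exactly to $2^{-n\gamma}$, because the total attainable gain $2^{-nE/2}$ coincides with the gap $E/2$ between the exponents in \eqref{coherence} and \eqref{coherence2}. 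This bookkeeping, keeping the nested conditional expectations consistent across directions and checking that every branch saturates the same exponent, is where I expect the main difficulty to lie.

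\emph{Existence.} Applying the core estimate to the germ differences in $f_{n+1}-f_n$ gives $\norm{(f_{n+1}-f_n)(\psi)}_{L_p}\lesssim 2^{-n\gamma}$, summable since $\gamma>0$; hence $f(\psi):=\lim_n f_n(\psi)$ exists in $L_p$ and defines a random distribution. For stochastic dimension $e$, each $f_n(\psi)$ is $\mathcal F^{(i)}_{a+c2^{-ns_i}}$-measurable whenever $\supp(\psi)\subset\R^{i-1}\times(-\infty,a]\times\R^{d-i}$, so completeness makes the $L_p$-limit measurable with respect to $\bigcap_{\epsilon>0}\mathcal F^{(i)}_{a+\epsilon}$, which equals $\mathcal F^{(i)}_a$ by right-continuity. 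The reconstruction bounds \eqref{unique1} and \eqref{unique2} follow by splitting $(f-F_x)(\psi_x^\lambda)=(f-f_N)(\psi_x^\lambda)+(f_N-F_x)(\psi_x^\lambda)$ with $2^{-N}\sim\lambda$: summing the rescaled core estimate over $n\ge N$ produces $\lambda^{-E/2}\cdot\lambda^{\gamma}=\lambda^{\gamma-E/2}$ for the first term, while the second is a single-scale germ sum controlled directly by \eqref{coherence}. Conditioning on $\mathcal F^{(i)}_{\pi_i(x)}$ replaces \eqref{coherence} by \eqref{coherence2} throughout and recovers the missing $E/2$, yielding the exponent $\gamma$ in \eqref{unique2}.

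\emph{Uniqueness.} Let $\tilde f$ be any random distribution of stochastic dimension $e$ satisfying \eqref{unique1} and \eqref{unique2}, and set $h:=\tilde f-f$. Since $\tilde f$ and $f$ are almost surely genuine distributions, both equal the limit of their wavelet projections, so $h(\psi)=\lim_n\sum_{y\in\Lambda_n}(\tilde f-F_y)(\phi_y^n)\scalar{\phi_y^n,\psi}$. The bracket $G_y:=(\tilde f-F_y)(\phi_y^n)$ satisfies exactly the size and conditional bounds required by the core estimate, now furnished by \eqref{unique1} and \eqref{unique2} rather than by coherence, whence $\norm{h(\psi)}_{L_p}\lesssim 2^{-n\gamma}\to 0$ as $n\to\infty$ since $\gamma>0$. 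Therefore $h(\psi)=0$ almost surely for every $\psi$, i.e. $\tilde f=f$ up to modifications, completing the proof.
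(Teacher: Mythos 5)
Your overall strategy coincides with the paper's: the same reconstructing sequence, and your ``core estimate'' is exactly the paper's key lemma (Lemma \ref{lem:stochTechnique}), proved there just as you describe, by peeling off the stochastic directions with \eqref{BDG} and using orthogonality to preserve adaptedness. (One technicality you gloss over: naively applying \eqref{BDG} along a direction conditions on a $\sigma$-algebra strictly later than $\mathcal F^{(i)}_{\pi_i(y)}$, so the paper must first split $\Delta_n$ into $\lceil\tilde R\rceil^e$ shifted sub-lattices of coarser mesh; this is repairable bookkeeping, as you anticipated.) Your uniqueness argument is also essentially the paper's Lemma \ref{lem:prop3}, modulo a family version of the core estimate.

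The genuine gap is in your existence step. It is \emph{not} true that $f_{n+1}-f_n$ can be rewritten as a sum of germ differences. This works only for the part tested against $P_n\psi$, where the refinement relation gives $(f_{n+1}-f_n)(\phi_y^n)=\sum_k a_k^{(n+1)}(F_{y+k}-F_y)(\phi_{y+k}^{n+1})$ (the paper's \eqref{eq:(III)Sum}). Tested against $\hat P_n\psi$ one gets $(f_{n+1}-f_n)(\hat P_n\psi)=f_{n+1}(\hat P_n\psi)=\sum_z F_z(\phi_z^{n+1})\scalar{\phi_z^{n+1},\hat P_n\psi}$, a sum of \emph{raw} germ evaluations, which stochastic coherence does not control at all: coherence bounds only differences, and the distributional bound $\norm{F_z(\phi_z^{n+1})}_{L_p}\lesssim\norm{\phi_z^{n+1}}_{C_c^r}$ grows in $n$. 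One converts this into germ differences only by subtracting a reference germ, $(f_{n+1}-f_n-F_x)(\hat P_n\psi)=\sum_z(F_z-F_x)(\phi_z^{n+1})\scalar{\phi_z^{n+1},\hat P_n\psi}$, and this has two consequences your proposal misses. First, the resulting rate is $2^{-n(\alpha+\tilde r)}\lambda^{\gamma-\frac E2-\alpha-\tilde r}$, not $2^{-n\gamma}$; this is precisely where the hypothesis $\alpha+\tilde r>0$ enters, and that hypothesis never actually appears in your estimates --- a sign that a term is unaccounted for. Second, and more seriously, the leftover $F_x(\hat P_n\psi)$ sums over $n$ to $F_x\left((P_l-P_n)\psi\right)$, which converges as $l\to\infty$ only if the single distribution $F_x$ has extra regularity ($F_x\in C^\beta(L_p)$ with $\beta+\tilde r>0$, Lemma \ref{lem:PnPsiConverges}); a general random $C_c^r$-distribution only lies in $C^{-\abs S-r}(L_p)$. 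Hence your claim $\norm{(f_{n+1}-f_n)(\psi)}_{L_p}\lesssim 2^{-n\gamma}$ is false in general, and the reconstructing sequence $f_n(\psi)$ need not converge --- the paper states this explicitly before Lemma \ref{lem:CbetaUnnessecary}. Closing this gap requires an additional idea present in the paper but absent from your proposal: prove instead that the corrected sequence $(f^{(n)}-P_nF_x)(\psi)$ converges, define $f:=\lim_n(f^{(n)}-P_nF_x)+F_x$, and show the result does not depend on the reference point $x$ (Lemmas \ref{lem:prop1+2} and \ref{lem:CbetaUnnessecary}). The same issue infects your uniqueness paragraph as written, since the identity $h(\psi)=\lim_n\sum_y(\tilde f-F_y)(\phi_y^n)\scalar{\phi_y^n,\psi}$ presupposes $f_n(\psi)\to f(\psi)$; the clean fix is the paper's, namely to compare two solutions directly via $h=f-g$, which satisfies the hypotheses of Lemma \ref{lem:stochTechnique} by the triangle inequality.
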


\noindent Note that the support of $\psi$ does have to be strictly positive. This is necessary to ``fit in'' a wavelet between the zero and the support of $\psi$. If we set $\tilde C = 2R$ (recall that the support of our wavelet is in $[C,R]^d$), the constant in $\lesssim$ generated by $\tilde C$ disappears.

\begin{remark}
The restrictions to $\psi$ might seem strange to the reader. Indeed, it seems that this theorem only makes a statement about test functions with (strictly) positive supports in the stochastic directions. This is not true since the localizations $\psi_y^\epsilon$ can have negative supports in all directions depending on $y$. The support does only serve to separate the point of localization of $\psi$ and the index of the germ $F_y$. Another way to formulate the result is to use a $\psi$ with any compact support, and take a look at $F_{(y-S^\lambda C)}(\psi_y^\lambda)$ for a certain vector $C$ depending on the support of $\psi$ and which directions are supposed to be stochastic.
\end{remark}

\begin{remark}
If one is given a germ $(F_x)_{x\in\R^d}$ such that $\alpha+\tilde r$ or $\gamma-\frac E2+\tilde r$ is smaller than $0$, one can always restrict $F_x$ to a space $C_c^{\hat r}$ for a large enough $\hat r$ to fulfill the conditions on the parameters.
\end{remark}

\noindent
Before we prove this theorem, let us discuss two modifications of it:

\subsection{Two-sided Reconstruction}\label{twosided}

The theorem we presented above is modeled after left-sided approximations of stochastic integrals. However, it is also possible (and closer to the original reconstruction Theorem) to have a two-sided reconstruction, i.e. to use test functions $\psi$ with support in $[-\tilde R,\tilde R]^d$ for some $\tilde R>0$. In this setting, it no longer makes sense to condition $F_x(\psi_x^\lambda)$ onto $\mathcal F^{(i)}_{\pi_i(x)}$, since the conditioning would not have any effect on test functions with purely negative support. Instead, we are going to condition it on the left boundary of the support of $\psi_x^\lambda$, i.e. onto $\mathcal F^{(i)}_{\pi_i(x)-\lambda^{s_i}\tilde R}$.

\begin{definition}[two-sided stochastic coherence]

Let $(F_x)_{x\in\R^d}$ be a stochastic germ with stochastic dimension $e\le d$. We call $(F_x)$ \emph{two-sided stochastically $\gamma$-coherent}, if there is an $\alpha<\gamma$, such that the following holds for all $\psi\in C_c^r$ with $\norm{\psi}_{C_c^r}=1$ and support in $[-\tilde R,\tilde R]^d$ for some $\tilde R>0$, for the stochastic directions $i=1,\dots,e$, for any compact set $K\subset \R^d$ and for any $x,y\in K$:


\begin{align*}
\norm{E^{\mathcal F^{(i)}_{\pi_i(y)-\lambda^{s_i} \tilde R}}(F_x-F_y)(\psi_y^\lambda)}_{L_p} &\lesssim \lambda^{\alpha}(\abs{x-y}+\lambda)^{\gamma-\alpha}\\
\norm{(F_x-F_y)(\psi_y^\lambda)}_{L_p} &\lesssim \lambda^{\alpha}(\abs{x-y}+\lambda)^{\gamma-\frac E2-\alpha},
\end{align*}

\noindent where the constant in $\lesssim$ is allowed to depend on the compact set $K$ and $\tilde R$.
\end{definition}

\noindent Note that this definition also does not have the condition, that $\pi_i(x)\le \pi_i(y)$, but rather uses arbitrary $x,y$. We want to add that there is no cheap way to get from the one-sided coherence property to the two-sided, or vice versa: If one replaces $\psi$ with a shifted function $\psi_{\vec r}$ to get a test function with a purely positive support, and substitute $y$ with $\tilde y = y-S^\lambda\vec r$, such that $\psi_y^\lambda = (\psi_{\vec r})_{\tilde y}^\lambda$, one gets $F_{\tilde y + S^\lambda \vec r}$ instead of $F_{\tilde y}$. This highlights the main difference between the two versions: In the one-sided reconstruction, $F_y$ only gets tested against the future, while in the two-sided version, $F_y$ sits ``in the middle'' of the support of the test function. This makes the two-sided version closely connected to Stratonovich integration. Since we are interested in the martingale properties, the one-sided version (which is closely connected to Itô integration) is more natural.

Using this property, one can show the following theorem:

\begin{theorem}[two sided stochastic reconstruction]\label{two-sided stochastic_reconstruction}
Let $(F_x)_{x\in\R^d}$ be a random germ for some $2\le p <\infty$ with stochastic dimension $e\le d$, which is two-sided stochastically $(\gamma,\alpha)$-coherent for some $\gamma > 0$, $\alpha<0$. Let $\tilde r := r\cdot\min_{i=1,\dots, d}(s_i)$ and assume $\alpha+\tilde r > 0$ and $\gamma-\frac E2+\tilde r > 0$.

Then, where is a unique (up to modifications) random distribution $f$ with stochastic dimension $e$ with respect to the same filtrations as the germ $(F_x)$, such that the following holds for $i=1,\dots,e$ and any test function $\psi$ with support in $[-\tilde R,\tilde R]^d$ for some $\tilde R>0$ and $\norm{\psi}_{C_c^r}=1$ and $\lambda\in(0,1]$:

\begin{align*}
\norm{(f-F_x)(\psi_x^\lambda)}_{L_2}&\lesssim \lambda^{\gamma-\frac E2} \\
\norm{E^{\mathcal F^{(i)}_{\check S_i(x)}}(f-F_x)(\psi_x^\lambda)}_{L_2}&\lesssim \lambda^\gamma,
\end{align*} 

\noindent where $\check S_i(x) := \pi_i(x)-\lambda^{s_i}(\tilde R+\tilde C)$ for some $\tilde C>0$, and the constant in $\lesssim$ is allowed to depend on the compact set $K\subset\R^d$ with $x\in K$ and on $\tilde C, \tilde R$.
\end{theorem}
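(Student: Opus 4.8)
The plan is to repeat the wavelet-based construction behind Theorem \ref{stochastic_reconstruction} almost verbatim, and to track the single structural change forced by the two-sided geometry: every conditional expectation must now be taken at the \emph{left edge} of the relevant support instead of at $\pi_i(\cdot)$. Concretely, I would build $f$ as the $L_p$-limit of the reconstruction sequence \eqref{sequence1} for a compactly supported wavelet $\phi$ whose support lies in $[-R,R]^d$, so that $\phi_y^n$ straddles the grid point $y$. Because the localized wavelet now reaches to $\pi_i(y)-2^{-n s_i}R$ on the left in each stochastic direction $i$, the martingale structure is recovered by conditioning on $\mathcal F^{(i)}_{\pi_i(y)-2^{-n s_i}R}$ rather than on $\mathcal F^{(i)}_{\pi_i(y)}$: by the stochastic dimension property of Definition \ref{stochDimension}, $F_y(\phi_y^n)$ is measurable with respect to $\mathcal F^{(i)}$ at the right edge of its support, so subtracting its conditional expectation at the left edge turns the scale-$n$ grid sum into a martingale-difference array along direction $i$.

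The main estimates then follow the one-sided scheme step by step. First I would split $f-f_N=\sum_{n\ge N}(f_{n+1}-f_n)$ and bound $\norm{(f_{n+1}-f_n)(\psi_x^\lambda)}_{L_p}$. The deterministic part of this bound comes from the two-sided $L_p$-coherence inequality (the second displayed bound in the definition of two-sided stochastic coherence), which costs $\frac E2$; this is regained by applying the BDG-type inequality from the appendix over the scale-$n$ grid, once for each stochastic direction $i=1,\dots,e$, using the orthogonality (commuting) property of the filtrations to iterate the conditioning across the $e$ directions. The compensator terms produced at each step are controlled by the sharper conditional-expectation bound (the first displayed inequality in that definition). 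Summing the resulting geometric series in $n$, which converges precisely because $\gamma>0$, yields the $L_p$-convergence of $f_n$ and the bound $\norm{(f-F_x)(\psi_x^\lambda)}_{L_p}\lesssim\lambda^{\gamma-\frac E2}$; in particular the claimed $L_2$ bound holds. Completeness and right-continuity of the filtrations then give that the limit $f$ inherits stochastic dimension $e$, exactly as in Theorem \ref{stochastic_reconstruction}.

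For the conditional bound I would take $E^{\mathcal F^{(i)}_{\check S_i(x)}}$ of $(f-F_x)(\psi_x^\lambda)$, with $\check S_i(x)=\pi_i(x)-\lambda^{s_i}(\tilde R+\tilde C)$. The extra buffer $\tilde C$ is exactly the room needed so that the coarsest wavelet contributing to the reconstruction near the support of $\psi_x^\lambda$ still lies strictly to the right of the conditioning point; with this choice every scale-$n$ contribution sits in the future of $\mathcal F^{(i)}_{\check S_i(x)}$, so that conditioning annihilates the martingale part and leaves only the compensator, which the sharper coherence bound controls at the improved rate $\lambda^\gamma$. Uniqueness follows as in the one-sided case: for two reconstructions $f,g$ the difference $h=f-g$ satisfies $\norm{h(\psi_x^\lambda)}_{L_p}\lesssim\lambda^{\gamma-\frac E2}$ together with a conditional bound of order $\lambda^\gamma$, and feeding $h$ back through the same wavelet/BDG estimate forces $\norm{h(\psi)}_{L_p}=0$ for every test function, since the two bounds combine through the BDG step into an effective rate $\gamma>0$.

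The hard part will be bookkeeping the scale-dependent conditioning points. Unlike the one-sided setting, where one conditions at the grid coordinate $\pi_i(y)$ with no scale-dependent offset, the left edge $\pi_i(y)-2^{-n s_i}R$ carries an offset that shrinks as the scale refines, so the conditional expectations at successive scales are not automatically nested and cannot be telescoped into a single martingale. The resolution I have in mind is to apply the BDG-type inequality \emph{scale by scale}, at each fixed $n$ exploiting only the ordering of the wavelet supports along direction $i$, which makes the centred summands a genuine martingale-difference sequence with respect to the increasing filtration $\mathcal F^{(i)}$, and using orthogonality to run the argument simultaneously in all $e$ stochastic directions; right-continuity is then used only at the end to pass the conditioning points to their limiting values. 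Checking that the buffer $\tilde C$ can be chosen uniformly in $n$, so that this martingale-difference interpretation is valid at every scale, is the delicate point of the estimate.
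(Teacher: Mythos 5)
Your proposal is correct and follows essentially the same route as the paper: the paper's own proof of this theorem is simply the observation that one repeats the argument for Theorem \ref{stochastic_reconstruction} using the standard wavelets with $\supp(\phi),\supp(\hat\phi)\subset[-R,R]^d$, which is exactly the adaptation you carry out. Your added bookkeeping — conditioning at the left edge $\pi_i(y)-2^{-ns_i}R$, applying the BDG-type inequality scale by scale on suitably coarsened sub-lattices so the centred summands form martingale differences, and using the buffer $\tilde C$ to fit wavelets to the right of the conditioning point — is precisely what "essentially the same proof" amounts to in the two-sided geometry.
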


\begin{proof}
The proof is essentially the same as for the one-sided version. However, one can use the more usual wavelets with supports fulfilling $\supp(\phi),\supp(\hat\phi)\subset [- R, R]^d$.
\end{proof}

\begin{remark}
Note that this theorem has a similar fattening happening in the uniqueness-property: We do not condition on the left boundary $\pi_i(x)-\lambda^{s_i}\tilde R$, but we leave a ``gap'', in which enough wavelets fit. If one sets $\tilde C=4R$, then $\tilde C$ does not contribute a constant in the above inequalities.
\end{remark}

\subsection{The covariance setting in $L_2$}\label{subSectionCov}

Theorem \ref{stochastic_reconstruction} contains an assumption, which might be stronger than one would expect at first: Namely that $F_x(\psi)$ is $\mathcal F_{\max(\pi_i(x),y)}^{(i)}$ measurable for $i=1,\dots,d$ for test functions $\psi$ with the respective support, for a set of filtrations which have the orthogonality property. As mentioned in Remark \ref{rem:notFree}, this adaptedness condition does not come for free. Furthermore, one can find several interesting examples, in which stochastic integrals have been constructed, even if one does violate this property:

\begin{itemize}
\item In the example \eqref{whiteNoise}, we have to assume that $X$ is adapted to all $\mathcal F^{(i)}_x$ in the above mentioned way. However, it is well known that the construction of the Walsh integral should work for all processes $X$, which are adapted to the time-filtration $\mathcal F^{(1)}_t$ \cite{walshOriginal}. A more throughout discussion of this can be found in Section \ref{sectionWN}.

\item More generally, Malliavin calculus \cite{nualart} successfully constructs the Skorohod integral over white noise, even for non-adapted processes $X$.

\item The technique used in \cite{regByNoise} to analyze regularization by noise only uses local non-determinism properties of the underlying noise for their construction, which only need the conditional covariances $\mathrm{cov}(X(t)~\vert~ X(s_1),\dots, X(s_n))$ to be known for a Gaussian noise $X$. These techniques should be possible for certain noises defined through their spectral densities \cite{anisotropicGF}, even if they do not generate orthogonal filtrations.
\end{itemize}

\noindent All of the above examples can circumvent the use of conditional expectation by using the covariances of the underlying Gaussian processes in some way. This motivated us to consider the case, in which the germ $F_x(\psi)$ is not adapted to orthogonal filtrations, but we instead know its covariance $E(F_x(\psi_1)F_y(\psi_2))$.

Let us take a look at how the conditional expectation is used in the proof of Theorem \ref{stochastic_reconstruction}: We use it to establish an inequality of the form $\norm{\sum_{x\in\Delta_n} g_x}_{L_p}\lesssim \left(\sum_{x\in\Delta_n}\norm{g_x}^2_{L_p}\right)^{\frac 12}$ plus some remainder (see estimate \eqref{iteration}) for a technical random term $g_x$ defined in \eqref{def:gx}. The remainder gets controlled by the additional property \eqref{coherence2} of the stochastic coherence property, which allows us to get a better bound for $\norm{\sum_{x\in\Delta_n} g_x}_{L_p}$ than simply pulling the norm into the sum.

If $p=2$, there is no need for such a bound, as one can explicitly calculate $\norm{\sum_{x\in\Delta_n} g_x}_{L_2}^2 = \sum_{x\in\Delta_n}\norm{g_x}_{L_2}^2 + \sum_{x\neq y} E(g_x g_y)$. So what we need is an upper bound on $\sum_{x\neq y} E(g_x g_y)$ to replace the stochastic machinery used in Theorem \ref{stochastic_reconstruction}. So we need an additional property that allows us to bound $E(g_xg_y)$ for $x\neq y$.

In practice, this should be a bound on the term $\Pi_{i=1}^2(F_{x_i}-F_{y_i})((\psi_i)_{y_i}^{\epsilon_i})$ for $(x_1,y_1)$ and $(x_2,y_2)$ ``far enough'' from each other, and two test functions $\psi_1,\psi_2$ in $C_c^r$. To quantify how far apart $(x_1,y_1)$ and $(x_2,y_2)$ need to be, we introduce the following concept:

\begin{definition}
We call the following intervals effective supports of the corresponding expressions:

\begin{itemize}
\item For $(F_x-F_y)(\psi)$, the effective support is the smallest possible intervall $[a,b] := [\pi_1(a),\pi_1(b)]\times\dots\times[\pi_d(a),\pi_d(b)]\subset\R^d$, such that $x,y\in[a,b]$ and $\supp(\psi)\subset[a,b]$.

\item Given the reconstruction $f$ of Theorem \eqref{stochastic_recon_covariance}, and a test function $\psi$ with support in $[0,\tilde R]^d$, we say that the effective support of $(f-F_x)(\psi_x^\lambda)$ is given by $[\pi_1(x),\pi_1(x)+2\lambda^{s_1}\tilde R]\times\dots\times[\pi_d(x),\pi_d(x)+2\lambda^{s_d}\tilde R]$.
\end{itemize}

\noindent Furthermore, given an $e\le d$, we call $[a,b]_e := [\pi_1(a),\pi_1(b)]\times\dots\times[\pi_e(a),\pi_e(b)]$ as well as $[\pi_1(x),\pi_1(x)+2\lambda^{s_1}\tilde R]\times\dots\times[\pi_e(x),\pi_e(x)+2\lambda^{s_e}\tilde R]$ the stochastic effective support of the respective expressions.
\end{definition}

\noindent
Note that the effective support of $(F_x-F_y)(\psi)$ is precisely the smallest interval that the expression sees. For $(f-F_x)(\psi_x^\lambda)$ however, $x$ and $\supp(\psi_x^\lambda)$ are contained in the smaller interval $[\pi_1(x),\pi_1(x)+\lambda^{s_1}\tilde R]\times\dots\times[\pi_d(x),\pi_d(x)+\lambda^{s_d}\tilde R]$ without the factor $2$. The reason we use this factor is that the reconstruction theorem causes a ``fattening'' effect, where we need some extra distance between two test functions to put in another wavelet. The easiest way to do this is by extending the effective support to be slightly bigger. Instead of the factor $2$, one can use any factor $\eta\ge 1$, at the cost that the constants appearing in $\lesssim$ will depend on $\eta$.

With these definitions, we can now give a precise meaning to points $(x_1,y_1)$ $(x_2,y_2)$ being ``far enough'' from each other. We want them to be so far apart, that the (stochastic) effective supports of $(F_{x_i}-F_{y_i})(\psi_{y_i}^{\epsilon_i})$ are disjoint. This is motivated by two observations: 

\begin{enumerate}
\item If we think of the example of reconstructing the Walsh integral against white noise \eqref{whiteNoise}, we have that white noise $\xi(\psi)$ is independent to $\mathcal F^{(i)}_{x_i}$ if $\supp(\psi)\subset \R^{i-1}\times[x_i,\infty)\times\R^{d-i+1}$, and has expected value $E(\xi(\psi)) = 0$. This implies that $\abs{E\left(\prod_{i=1}^2 (F_{x_i}-F_{y_i})(\psi_i)\right)} = 0$, where $F_x(\psi) = X(x)\xi(\psi)$ and whenever the effective stochastic supports of this terms are disjoint. So in this example, disjoint effective stochastic supports give us a true improvement in the regularity over simply applying Hölder's inequality to the product.

\item Let $x_i,y_i,\psi_i, i=1,2$ be such that the effective stochastic supports of $(F_{x_i}-F_{y_i})(\psi_i)$ are disjoint and assume w.l.o.g. $\pi_1 (x_1)\le \pi_1 (x_2)$. Then the disjoint effective stochastic supports imply that $\pi_1(y_1)\le \pi_1(x_2)$ and $\supp(\psi_1)\subset (-\infty,\pi_1(x_2)]$ which shows that $(F_{x_1}-F_{y_1})(\psi_1)$ is $\mathcal F^{(1)}_{\pi_1(x_2)}$-measurable. It follows that

\begin{align*}
\abs{E\left(\prod_{i=1}^2 (F_{x_i}-F_{y_i})(\psi_i)\right)} &= \abs{E\left((F_{x_1}-F_{y_1})(\psi_1)E^{\mathcal F_{\pi_1(x_2)}^{(1)}} (F_{x_2}-F_{y_2})(\psi_2)\right)} \\
&\le \norm{(F_{x_1}-F_{y_1})(\psi_1)}_{L_2}\norm{E^{\mathcal F_{\pi_1(x_2)}^{(1)}} (F_{x_2}-F_{y_2})(\psi_2)}_{L_2}\,.
\end{align*}

\noindent So if we assume that inequality \eqref{coherence2} from the classical setting holds, disjoint effective stochastic supports also give us better bounds on $E\left(\abs{\prod_{i=1}^2 (F_{x_i}-F_{y_i})(\psi_i)}\right)$ than simply applying Hölder's inequality.
\end{enumerate}

\noindent
Let us formally write down the coherence property in this setting:

\begin{definition}[stochastic coherence in the covariance setting]\label{defCoherenceCov}
Let $(F_x)_{x\in\R^d}$ be a stochastic germ with $p=2$ and let $e\le d$. We call $(F_x)$ stochastically $\gamma$-coherent in the covariance setting, if there is an $\alpha <\gamma$, such that the following holds for all test functions $\psi_1,\psi_2\in C_c^r(\R^d)$ with $\norm{\psi_i}_{C_c^r} = 1$ and $\supp(\psi_i)\subset[0,\tilde R]^e\times[-\tilde R,\tilde R]^{d-e}$ for some $\tilde R>0$ for $i=1,2$, for any compact set $K\subset\R^d$ and $x_1,x_2,y_1,y_2\in K$ with $\pi_i(x_j)\le \pi_i(y_j)$ for $i=1,\dots, e, j=1,2$, such that the stochastic effective supports of $(F_{x_i}-F_{y_i})((\psi_i)_{y_i}^{\epsilon_i})$, $i=1,2$ are disjoint and $\epsilon_1,\epsilon_2>0$:

\begin{align}
\norm{(F_{x_1}-F_{y_1})((\psi_1)_{y_1}^{\epsilon_1})}_{L_2} &\lesssim \epsilon_1^\alpha(\abs{x-y}+\epsilon)^{\gamma-\frac E2-\alpha} \label{coherenceCov1}\\
\abs{E\left(\prod_{i=1}^2 (F_{x_i}-F_{y_i})((\psi_i)_{y_i}^{\epsilon_i})\right)} &\lesssim \prod_{i=1}^2\epsilon_i^\alpha(\abs{x_i-y_i}+\epsilon_i)^{\gamma-\alpha}\label{coherenceCov2}\,,
\end{align}

\noindent where the constant appearing in $\lesssim$ is allowed to depend on $K$ and $\tilde R$.
\end{definition}

\noindent Note that the stochastic dimension $e$ of our problem is no longer an intrinsic property of the germ $(F_x)_{x\in\R^d}$, but only appears in the coherence property. If there is any room for confusion, we will write that $(F_x)_{x\in\R^d}$ is stochastically $(\gamma,\alpha,e)$ coherent in the covariance setting.

We want to stress that this condition is not a generalization of \eqref{coherence2}, not even in $p=2$: While one can show a bound on $\sum_{x\neq y}E(g_x g_y)$ under the conditions of Definition \ref{defCoherence}, it is in general not true, that $\abs{E(g_x g_y)}\lesssim \norm{E^{\mathcal F^{(i)}_{\pi_i(x)}}g_x}_{L_2}\norm{E^{\mathcal F^{(i)}_{\pi_i(y)}}g_y}_{L_2}$ for $\pi_i(x)<\pi_i(y)$. So \eqref{coherence2} does not imply \eqref{coherenceCov2}.

Let us now formulate the main result of this section:

\begin{theorem}[stochastic reconstruction in the covariance setting]\label{stochastic_recon_covariance}
Let $(F_x)_{x\in\R^d}$ be a random germ for some $p\ge 2$ which is stochastically $(\gamma,\alpha, e)$-coherent in the covariance setting for some $\gamma > 0, \alpha<0$ and $e\le d$. Let $\tilde r := r\cdot\min_{i=1,\dots, d}(s_i)$ and assume $\alpha+\tilde r > 0$ and $\gamma-\frac E2+\tilde r > 0$.

Then, where is a unique (up to modifications) random distribution $f$, such that the following holds for any test function $\psi\in C_c^r$ with $\norm{\psi}_{C_c^r}=1$ and with support $\supp(\psi)\subset[\tilde C,\tilde R]^e\times[-\tilde R,\tilde R]^{d-e}$ for some $0<\tilde C<\tilde R$, and for any compact set $K\subset\R^d$, $x_1,x_2\in K$ and $\lambda\in(0,1]$ such that the stochastic effective supports of $(f-F_{x_i})(\psi)$ are disjoint for $i=1,2$:

\begin{align}
\norm{(f-F_{x_1})(\psi_{x_1}^\lambda)}_{L_2}&\lesssim \lambda^{\gamma-\frac E2} \label{uniqueCov1}\\
\abs{E\left(\prod_{i=1}^2(f-F_{x_i})(\psi_{x_i}^\lambda)\right)}&\lesssim\lambda^{2\gamma}\label{uniqueCov2}
\end{align} 

\noindent where the constant in $\lesssim$ again depends on $K$ and the constants $\tilde C, \tilde R$.
\end{theorem}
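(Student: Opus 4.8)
The overall strategy mirrors the construction Hairer uses and that underlies Theorem \ref{stochastic_reconstruction}: build the reconstruction as the $L_2$-limit of the wavelet-based sequence $f_n(\psi) = \sum_{y\in\Delta_n} F_y(\phi_y^n)\scalar{\phi_y^n,\psi}$ from \eqref{sequence1}, and control the increments $f_{n+1}-f_n$ by decomposing them over the dyadic grid. The key object, following the excerpt's own roadmap, is the technical term $g_x$ (cf.\ \eqref{def:gx}) arising from the difference of two successive approximations. The crux is the bound $\norm{\sum_{x\in\Delta_n} g_x}_{L_2}$. Here is where the covariance setting replaces the BDG/conditional-expectation machinery: since $p=2$, I would write the identity
\begin{equation*}
\norm{\textstyle\sum_{x\in\Delta_n} g_x}_{L_2}^2 = \sum_{x\in\Delta_n}\norm{g_x}_{L_2}^2 + \sum_{x\neq y} E(g_x g_y)\,,
\end{equation*}
and estimate the two pieces separately. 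The diagonal $\sum_x \norm{g_x}_{L_2}^2$ is handled by the pointwise coherence bound \eqref{coherenceCov1} exactly as in the deterministic/classical reconstruction argument, yielding a factor that decays like $2^{-n(\gamma-\frac E2)}$ summed against the number of grid points.

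The off-diagonal sum $\sum_{x\neq y} E(g_x g_y)$ is the heart of the matter and where \eqref{coherenceCov2} enters. Each $g_x$ is (a sum of) terms of the form $(F_{x}-F_{z})((\psi)_z^{\epsilon})$ at scale $\epsilon\sim 2^{-n}$, so $E(g_x g_y)$ is a covariance of exactly the type bounded in \eqref{coherenceCov2} — \emph{provided} the stochastic effective supports are disjoint. I would therefore split the off-diagonal sum according to whether the stochastic effective supports of the two factors are disjoint or not. For the disjoint pairs I apply \eqref{coherenceCov2} directly, obtaining a product bound $\prod_{i=1,2}\epsilon_i^\alpha(\abs{x_i-y_i}+\epsilon_i)^{\gamma-\alpha}$. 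For the non-disjoint pairs — those whose grid points are within $O(2^{-n})$ of each other in the stochastic coordinates — I use Cauchy--Schwarz, $\abs{E(g_xg_y)}\le \norm{g_x}_{L_2}\norm{g_y}_{L_2}$, together with \eqref{coherenceCov1}; crucially, the number of such near-diagonal pairs is comparable to the number of diagonal terms (only a bounded number of $y$ can fail disjointness with a given $x$ in each stochastic direction), so this contribution is of the same order as the diagonal and does not spoil the estimate.

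Combining the diagonal and both off-diagonal contributions, I expect to obtain $\norm{f_{n+1}-f_n}_{L_2}\lesssim 2^{-n\kappa}$ for some $\kappa>0$ whenever $\gamma>0$, giving a convergent telescoping series and hence a well-defined limit $f$ in $L_2$; the hypotheses $\alpha+\tilde r>0$ and $\gamma-\frac E2+\tilde r>0$ guarantee the wavelet pairings and the relevant geometric sums converge, just as in Theorem \ref{stochastic_reconstruction}. The bounds \eqref{uniqueCov1}--\eqref{uniqueCov2} on $(f-F_x)(\psi_x^\lambda)$ then follow by summing the per-level estimates, with \eqref{uniqueCov1} coming from the single-factor bound and \eqref{uniqueCov2} reproducing the product structure of \eqref{coherenceCov2} at the level of the reconstruction (again splitting into disjoint and near-diagonal scales, where the factor-$2$ fattening in the definition of the effective support is exactly what ensures enough room to insert wavelets and keep the relevant supports disjoint). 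Uniqueness up to modifications is argued as usual: given two reconstructions, their difference satisfies the covariance bound \eqref{uniqueCov2} with exponent $2\gamma>0$ together with the $L_2$ bound \eqref{uniqueCov1} at every scale, and testing against arbitrary $\psi$ while sending $\lambda\to 0$ forces the difference to vanish in $L_2$.

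The main obstacle I anticipate is the careful bookkeeping of the off-diagonal sum: one must verify that \eqref{coherenceCov2} is genuinely applicable — i.e.\ that the stochastic effective supports of the two $g$-factors are disjoint for all but $O(1)$ neighbors per point in each stochastic direction — and then sum the resulting product bound over the grid $\Delta_n\times\Delta_n$ with the correct combinatorial multiplicities and scaling weights $s_i$. Getting the exponent of the near-diagonal contribution to match (rather than dominate) the diagonal, and confirming that the homogeneous-norm factors $(\abs{x_i-y_i}+\epsilon_i)^{\gamma-\alpha}$ assemble into a summable expression under the anisotropic scaling, is the delicate part; everything else parallels the classical wavelet reconstruction argument.
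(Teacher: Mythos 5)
The existence half of your proposal is essentially the paper's argument: the paper also runs the wavelet telescoping $f^{(l)} = f^{(n)} + \sum_m (f^{(m+1)}-f^{(m)})$ and, for the critical term, expands the $L_2$-norm of $\sum_x g_x$ into diagonal plus off-diagonal covariances, using \eqref{coherenceCov1} for the diagonal and \eqref{coherenceCov2} for the off-diagonal pairs. Your treatment of the near-diagonal pairs (those with overlapping stochastic effective supports) by Cauchy--Schwarz plus the observation that each grid point has only $O(1)$ such neighbours is a harmless variant of what the paper does, namely splitting $\Delta_n$ into finitely many coarser shifted meshes $\Delta_n(r_n^{(0)})$ on which all distinct pairs have disjoint effective supports; both yield the same order of contribution.

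However, your uniqueness argument has a genuine gap, and it is exactly the one the paper warns about. You propose to take two reconstructions $f,g$, form $h=f-g$, and argue that $h$ inherits the bounds \eqref{uniqueCov1}--\eqref{uniqueCov2}. But expanding
\begin{equation*}
E\bigl(h(\psi_{x_1}^\lambda)\,h(\psi_{x_2}^\lambda)\bigr)
= E\Bigl(\bigl[(f-F_{x_1})-(g-F_{x_1})\bigr](\psi_{x_1}^\lambda)\cdot\bigl[(f-F_{x_2})-(g-F_{x_2})\bigr](\psi_{x_2}^\lambda)\Bigr)
\end{equation*}
produces the mixed terms $E\bigl((f-F_{x_1})(\psi_{x_1}^\lambda)(g-F_{x_2})(\psi_{x_2}^\lambda)\bigr)$, which are \emph{not} controlled by any hypothesis: \eqref{uniqueCov2} only bounds covariances of one reconstruction against itself, and Cauchy--Schwarz combined with \eqref{uniqueCov1} only gives $\lambda^{2\gamma-E}$, which is too weak. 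The same problem reappears when you try to show $h(P_n\psi)\to 0$: without conditional expectations there is no analogue of Lemma \ref{lem:stochTechnique}, and the off-diagonal terms $E(h(\phi_x^n)h(\phi_y^n))$ again contain uncontrollable mixed covariances. The paper avoids this entirely (Lemma \ref{lem:uniqueCov}) by never comparing two reconstructions directly: it shows that \emph{any} $f$ satisfying \eqref{uniqueCov1} and \eqref{uniqueCov2} must agree with the limit of the reconstructing sequence, by estimating
\begin{equation*}
\norm{f(P_n\psi)-f^{(n)}(\psi)}_{L_2}^2 = \norm{\sum_{x\in\Delta_n}(f-F_x)(\phi_x^n)\scalar{\phi_x^n,\psi}}_{L_2}^2 \lesssim 2^{-2n\gamma}\xrightarrow{n\to\infty}0\,,
\end{equation*}
where only covariances of $f-F_x$ against $f-F_y$ appear — precisely what \eqref{uniqueCov2} (after the mesh-splitting to ensure disjoint effective supports) and \eqref{uniqueCov1} control. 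Since every admissible $f$ equals the same limit $\lim_n f^{(n)}(\psi)$ on a dense set of test functions, uniqueness follows. Your proof would be complete if you replaced your uniqueness step with this argument.
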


\noindent Condition \eqref{uniqueCov2} is the weakest additional condition we could find, which still uniquely characterizes the reconstruction. However, it would be closer to the coherence condition to analyze the expression $\prod_{i=1}^2 (f-F_x)((\psi_i)_{x_i}^{\lambda_i})$ for different test functions $\psi_1,\psi_2$ and different $\lambda_1,\lambda_2\in(0,1]$. A simple corollary of the proof of Theorem \ref{stochastic_recon_covariance} shows that \eqref{uniqueCov2} still holds in this setting:

\begin{corollary}\label{cor:uniqueCovMult}
Let $(F_x)$ be as in Theorem \ref{stochastic_recon_covariance} and let $f$ be the reconstructed random distribution. Let $\psi_1,\psi_2\in C_c^r$ be two test functions with $\norm{\psi_i}_{C_c^r}=1$ and with support $\supp(\psi_i)\subset[\tilde C,\tilde R]^e\times[-\tilde R,\tilde R]^{d-e}$ for some $0<\tilde C<\tilde R$, $i=1,2$. For any compact set $K\subset\R^d$, $x_1,x_2\in K$ and $\lambda_1,\lambda_2\in(0,1]$ such that the stochastic effective supports of $(f-F_{x_i})((\psi_i)_{x_i}^{\lambda_i})$ are disjoint, it holds that:

\begin{equation*}
\abs{E\left(\prod_{i=1}^2(f-F_{x_i})((\psi_i)_{x_i}^{\lambda_i})\right)}\lesssim\lambda_1^\gamma\lambda_2^\gamma\,,
\end{equation*}

\noindent
where the constant in $\lesssim$ depends on $K, \tilde C$ and $\tilde R$.
\end{corollary}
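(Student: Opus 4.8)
The plan is to re-run the estimate that produces \eqref{uniqueCov2} in the proof of Theorem \ref{stochastic_recon_covariance}, but to refrain from setting the two test functions and the two scales equal. The key observation is that the only place where the symmetry $\psi_1=\psi_2$, $\lambda_1=\lambda_2$ is exploited in that argument is the final geometric summation over wavelet scales; the bilinear covariance bound \eqref{coherenceCov2}, which is the sole stochastic input, is already phrased for two arbitrary test functions $\psi_1,\psi_2$ at two arbitrary scales $\epsilon_1,\epsilon_2$. Since $f$ is the already-reconstructed distribution supplied by the theorem, we only need the upper bound, and the whole argument goes through while tracking $\lambda_1$ and $\lambda_2$ separately, the exponent $2\gamma$ simply splitting as $\gamma+\gamma$ across the two factors $\lambda_1^\gamma\lambda_2^\gamma$.

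Concretely, I would first expand each factor $(f-F_{x_i})((\psi_i)_{x_i}^{\lambda_i})$ through the reconstruction sequence \eqref{sequence1}: writing $f$ as the $L_2$-limit of $f_n$ and telescoping over the dyadic scales $n\ge n_i$ with $2^{-n_i}\sim\lambda_i$, one represents each factor as a convergent sum of germ increments of the type $(F_y-F_{x_i})(\text{wavelet})$ paired against father and mother wavelets localized near the stochastic effective support of $(f-F_{x_i})((\psi_i)_{x_i}^{\lambda_i})$. These are exactly the terms $g_x$ appearing in the proof of the theorem. Taking the expectation of the product then produces a double sum indexed by the two scale parameters and the two wavelet positions. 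For every pair of wavelets whose stochastic effective supports are disjoint --- which, by the disjointness hypothesis placed on $(f-F_{x_i})((\psi_i)_{x_i}^{\lambda_i})$, is the generic case --- I apply \eqref{coherenceCov2} and obtain a product of two one-sided factors of the form $\epsilon_i^\alpha(\abs{x_i-y_i}+\epsilon_i)^{\gamma-\alpha}$.

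It then remains to sum. Since $\gamma-\alpha>0$ and $\gamma>0$, each of the two single-scale sums converges geometrically and produces the factor $\lambda_i^\gamma$, so the disjoint part of the double sum factorizes into $\lambda_1^\gamma\lambda_2^\gamma$. For the finitely many near-boundary wavelet pairs whose supports are not disjoint, that is, those straddling the interface between the two effective supports, the estimate \eqref{coherenceCov2} is unavailable, and I would instead bound $\abs{E(g_x g_y)}$ by Cauchy--Schwarz followed by the single-factor estimate \eqref{coherenceCov1}. The geometry of the dyadic grid bounds the number of such pairs per scale, and summing their contribution gives the same order $\lambda_1^\gamma\lambda_2^\gamma$.

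I expect the only delicate point to be precisely this boundary layer: verifying that the wavelet pairs which fail the disjointness requirement of Definition \ref{defCoherenceCov} contribute no worse than $\lambda_1^\gamma\lambda_2^\gamma$ once Cauchy--Schwarz and \eqref{coherenceCov1} are invoked. This is, however, the very same estimate already carried out in the symmetric case of Theorem \ref{stochastic_recon_covariance}; the sole modification is the bookkeeping that keeps the two scales distinct, which does not interact with the stochastic content of the bound. Hence no new obstacle arises, which justifies presenting the statement as a corollary of the theorem's proof rather than as an independent result.
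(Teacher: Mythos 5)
Your main line coincides with the paper's own proof: run the argument for \eqref{uniqueCov2} in Theorem \ref{stochastic_recon_covariance} with two separate dyadic levels $n_1,n_2$ chosen so that $2^{-n_i}\approx\lambda_i$, decompose each factor $(f-F_{x_i})((\psi_i)_{x_i}^{\lambda_i})$ by \eqref{eq:decomp1} at its own level, bound every cross term through the bilinear estimate \eqref{coherenceCov2} exactly as in \eqref{ineq:calc1}, \eqref{ineq:calc2}, \eqref{ineq:calc3} with the index $i$ carried along, and sum the two geometric series separately so that $\lambda^{2\gamma}$ splits as $\lambda_1^\gamma\lambda_2^\gamma$. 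That is indeed the entire content of the paper's proof of the corollary.

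The genuine flaw is your ``boundary layer''. Under the corollary's hypothesis there are \emph{no} wavelet pairs for which \eqref{coherenceCov2} is unavailable. The stochastic effective support of $(f-F_{x_i})((\psi_i)_{x_i}^{\lambda_i})$ is defined with the factor-$2$ fattening $\prod_{j=1}^e[\pi_j(x_i),\pi_j(x_i)+2\lambda_i^{s_j}\tilde R]$ for precisely this purpose: as checked in the proof of Theorem \ref{stochastic_recon_covariance}, every term occurring in the expansions $(I)_{n_i}$, $(II)_m$, $(III)_m$, $m\ge n_i$, has its effective stochastic support contained in this fattened box (the needed inequality $2^{-n_i s_j}R\lesssim\lambda_i^{s_j}\tilde C/2$ follows from the choice of $n_i$ and carries over verbatim when the two scales differ). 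Hence disjointness of the two fattened boxes already makes \emph{every} cross pair admissible for \eqref{coherenceCov2}; no pair ``straddles the interface''. This is not a cosmetic point, because the fallback you propose for such pairs would not produce the stated bound: Cauchy--Schwarz together with the single-factor estimate \eqref{coherenceCov1} (equivalently \eqref{ineq1}) yields only factors of order $\lambda_i^{\gamma-\frac E2}$, and one checks already in the simplest case $d=e=1$ (where the putative layer consists of $O(1)$ pairs per scale) that the resulting contribution is of order $\lambda^{2\gamma-E}$ rather than $\lambda^{2\gamma}$. Your assertion that this is ``the very same estimate already carried out in the symmetric case'' conflates two different steps of the theorem's proof: Cauchy--Schwarz on nearby terms is used only in the \emph{existence} part, to bound $\norm{(III)_m}_{L_2}$, where a loss of $\lambda^{-\frac E2}$ is harmless; in the proof of the covariance bound \eqref{uniqueCov2} every single pair must be estimated through \eqref{coherenceCov2}. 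So your argument succeeds only because the set of pairs you route through the fallback is empty; the correct fix is to delete that step and insert the containment-of-effective-supports observation, which is exactly what licenses applying \eqref{coherenceCov2} across the board.
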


\noindent Since the proof of this result relies on the proof of Theorem \ref{stochastic_recon_covariance}, we will present it directly after said proof in Section \ref{sec:proofs}.

\section{Proofs of Theorem \ref{stochastic_reconstruction} and \ref{stochastic_recon_covariance}}\label{sec:proofs}

The proofs in this section make use of technical tools, which we did not yet introduce, namely the BDG-type inequality \eqref{BDG} and wavelet techniques. 
As we will make heavy use of wavelet techniques especially, let us fix a wavelet bases $(\phi,\Phi)$ here, as well as two constants $0<C<R$ such that $\supp(\phi),\supp(\hat\phi)\subset[C,R]^d$ holds for all $\hat\phi\in\Phi$. A proper introduction to wavelets can be found in the appendix.

\subsection{Proof of Theorem \ref{stochastic_reconstruction}}

The main idea of the proof is to take the reconstructing sequence

\begin{equation}\label{eq:reconstructingSequence}
f^{(n)}(\psi) := \sum_{x\in\Delta_n} F_x(\phi_x^n)\scalar{\phi_x^n,\psi}\,,
\end{equation}

\noindent and show the following three properties under the assumptions of Theorem \ref{stochastic_reconstruction}

\begin{enumerate}
\item $f^{(n)}$ converges to some distribution $f$ for $n\to\infty$.
\item $f$ fulfills inequalities \eqref{unique1} and \eqref{unique2}.
\item There is at most one distribution $f$ fulfilling inequalities \eqref{unique1} and \eqref{unique2}.
\end{enumerate}


\noindent Our strategy for the proof is to decompose $f^{(l)} = f^{(n)} + \sum_{k=n}^{l-1}(f^{(k+1)}-f^{(k)})$ and split a test function $\psi$ into $\psi = P_n\psi + \sum_{m=n}^\infty \hat P_m\psi$, so that it suffices to test $f^{(n+1)}-f^{(n)}$ against $P_n\psi$ and $\hat P_n\psi$. We first show bounds for these terms in Lemma \ref{lem:technical1} and \ref{lem:technical2}, before proving properties 1 and 2 in Lemma \ref{lem:prop1+2} and Lemma \ref{lem:CbetaUnnessecary}. Property 3 will be shown in Lemma \ref{lem:prop3}.

Before we start with the proofs, let us recall the assumptions of Theorem \ref{stochastic_reconstruction}:

\begin{enumerate}[i)]
\item $(F_x)$ is a random germ with stochastic dimension $e\le d$.
\item $(F_x)$ is stochastically $(\gamma,\alpha)$-coherent for some $\gamma>0,\alpha<0$.
\item $r$ is big enough, such that $\alpha+\tilde r>0, \gamma-\frac E2+\tilde r>0$, for $\tilde r$ as in Theorem \ref{stochastic_reconstruction}.
\end{enumerate}

\noindent With this in mind, we can state our first technical lemma. Since $\norm{(f^{(n+1)}-f^{(n)})(P_n \psi)}_{L_p}$ is rather technical to bound, we do so in a separate lemma and start with the other necessary bounds. As it turns out, instead of estimating $\norm{(f^{(n+1)}-f^{(n)})(\hat P_n\psi)}_{L_p}$, estimating $\norm{(f^{(n+1)}-f^{(n)}-F_x)(\hat P_n\psi)}_{L_p}$ allows us to combine the proofs of properties 1 and 2, so this is the term we want to bound from above:

\begin{lemma}\label{lem:technical1}
Let $(F_x)$ be as in Theorem \ref{stochastic_reconstruction}, choose some $\tilde C>0$ and let $B := \min_{i=1,\dots,e} \left(\frac {\tilde C}{2R}\right)^{-\frac 1{s_i}}$. Then it holds that for any $x$ in some compact $K\subset\R^d$, $B2^{-n}\le\lambda\le 1$ and any test function $\psi\in C_c^r$ with support in $[\tilde C,\infty)^e\times\R^{d-e}$, we have:

\begin{align}
\norm{(f^{(n+1)}-f^{(n)}-F_x)(\hat P_n\psi_x^\lambda)}_{L_p} &\lesssim 2^{-n(\alpha+\tilde r)}\lambda^{\gamma-\frac E2-\alpha-\tilde r}\norm{\psi}_{C_c^r}\label{eq:techIneq1}\\
\norm{E^{\mathcal F^{(i)}_{\pi_i(x)}}(f^{(n+1)}-f^{(n)}-F_x)(\hat P_n\psi_x^\lambda)}_{L_p}&\lesssim 2^{-n(\alpha+\tilde r)}\lambda^{\gamma-\alpha-\tilde r}\norm{\psi}_{C_c^r}\label{eq:techIneq2}\\
\norm{E^{\mathcal F_{\pi_i(x)}^{(i)}}(f^{(n+1)}-f^{(n)})(P_n\psi_x^\lambda)}_{L_p}&\lesssim 2^{-n\gamma}\norm{\psi}_{C_c^r}\,,\label{eq:techIneq3}
\end{align}

\noindent for $i=1,\dots,e$.
\end{lemma}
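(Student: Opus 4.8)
The plan is to reduce each of the three quantities to an absolutely convergent sum of germ differences tested against single localized wavelets, to which the coherence bounds \eqref{coherence} and \eqref{coherence2} apply termwise, and then to sum up using standard wavelet estimates. Write $x$ for the fixed reference point and recall from the multiresolution structure (appendix) that $V_{n+1}=V_n\oplus W_n$, that $\{\phi_z^{n+1}\}_z$ and $\{\phi_w^n\}_w\cup\{\hat\phi_w^n\}_w$ are orthonormal bases, and that $f^{(n)}(\hat P_n\psi)=0$ since $\hat P_n\psi\in W_n\perp V_n$. For \eqref{eq:techIneq1} and \eqref{eq:techIneq2}, expanding $\hat P_n\psi_x^\lambda$ in the fine basis and using linearity of $F_x$ gives the clean identity
\[(f^{(n+1)}-f^{(n)}-F_x)(\hat P_n\psi_x^\lambda)=\sum_{z\in\Delta_{n+1}}(F_z-F_x)(\phi_z^{n+1})\scalar{\phi_z^{n+1},\hat P_n\psi_x^\lambda}.\]
For \eqref{eq:techIneq3} I would instead keep the \emph{local} structure of the refinement relation: since $f^{(n)}(\phi_w^n)=F_w(\phi_w^n)$ and $\phi_w^n=\sum_z\scalar{\phi_z^{n+1},\phi_w^n}\phi_z^{n+1}$, one gets $(f^{(n+1)}-f^{(n)})(\phi_w^n)=\sum_z\scalar{\phi_z^{n+1},\phi_w^n}(F_z-F_w)(\phi_z^{n+1})$, hence $(f^{(n+1)}-f^{(n)})(P_n\psi_x^\lambda)=\sum_w\scalar{\phi_w^n,\psi_x^\lambda}\sum_z\scalar{\phi_z^{n+1},\phi_w^n}(F_z-F_w)(\phi_z^{n+1})$. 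The crucial point is that here the germ differences are local, $\abs{z-w}\lesssim 2^{-n}$, whereas in the first identity they range up to scale $\lambda$.

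Next I would apply coherence, handling two bookkeeping issues. First, \eqref{coherence}--\eqref{coherence2} are stated for $L^1$-normalized localizations $\psi_y^\epsilon$, while the $\phi_z^{n+1}$ are $L^2$-normalized; converting costs a factor $2^{-n\abs S/2}$, so that e.g. $\norm{(F_z-F_x)(\phi_z^{n+1})}_{L_p}\lesssim 2^{-n\abs S/2}2^{-n\alpha}(\abs{z-x}+2^{-n})^{\gamma-\frac E2-\alpha}$ by \eqref{coherence}. Second, one must verify the ordering hypothesis $\pi_i(x)\le\pi_i(z)$ of the coherence property. This is exactly where the positivity $\supp(\psi)\subset[\tilde C,\infty)^e\times\R^{d-e}$, the positive wavelet support $[C,R]^d$, and the bound $\lambda\ge B2^{-n}$ enter: the choice $B=\min_i(\tilde C/2R)^{-1/s_i}$ guarantees that the gap $\lambda^{s_i}\tilde C$ between $\pi_i(x)$ and $\supp(\psi_x^\lambda)$ dominates the $O(2^{-ns_i}R)$ fattening produced by $\hat P_n$ and the refinement, forcing $\pi_i(x)\le\pi_i(z)$ (respectively $\pi_i(x)\le\pi_i(w)\le\pi_i(z)$) for every contributing index. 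For \eqref{eq:techIneq2} the far index is $x$ itself, so \eqref{coherence2} may be applied with conditioning exactly at $\pi_i(x)$. For \eqref{eq:techIneq3} the far index is $w$, so \eqref{coherence2} bounds $E^{\mathcal F^{(i)}_{\pi_i(w)}}$; since $\pi_i(x)\le\pi_i(w)$ I then pass to $E^{\mathcal F^{(i)}_{\pi_i(x)}}$ by the tower property and conditional Jensen, which only decreases the $L_p$ norm, while the locality $\abs{z-w}\lesssim 2^{-n}$ collapses $(\abs{z-w}+2^{-n})^{\gamma-\alpha}$ to $2^{-n(\gamma-\alpha)}$, yielding the per-term bound $2^{-n\abs S/2}2^{-n\gamma}$.

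It then remains to sum. For the wavelet-tested identities I would use the vanishing moments of $\hat\phi$: Taylor expanding $\psi_x^\lambda$ and inserting the derivative bounds $\norm{(\psi_x^\lambda)^{(k)}}_\infty\lesssim\lambda^{-\abs S-(k\cdot s)}\norm{\psi}_{C_c^r}$ gives $\abs{\scalar{\hat\phi_w^n,\psi_x^\lambda}}\lesssim 2^{-n\abs S/2}(2^{-n}/\lambda)^{\tilde r}\lambda^{-\abs S}\norm{\psi}_{C_c^r}$, the exponent $\tilde r=r\min_i s_i$ arising from the worst (lowest-homogeneity) monomial of coordinate degree $r$. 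Combining this with the per-term coherence bounds and summing over the $O((\lambda 2^n)^{\abs S})$ contributing indices produces the factors $\lambda^{\gamma-\frac E2-\alpha-\tilde r}$ and $\lambda^{\gamma-\alpha-\tilde r}$ together with $2^{-n(\alpha+\tilde r)}$, which are precisely \eqref{eq:techIneq1} and \eqref{eq:techIneq2}. For \eqref{eq:techIneq3} the scaling functions $\phi_w^n$ carry no vanishing moments, so I would only use $\abs{\scalar{\phi_w^n,\psi_x^\lambda}}\lesssim 2^{-n\abs S/2}\lambda^{-\abs S}\norm{\psi}_{C_c^r}$; since $\sum_w\abs{\scalar{\phi_w^n,\psi_x^\lambda}}\lesssim 2^{n\abs S/2}\norm{\psi}_{C_c^r}$ and the refinement sum over $z$ has $O(1)$ terms, multiplying by the per-term bound $2^{-n\abs S/2}2^{-n\gamma}$ leaves exactly $2^{-n\gamma}$.

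The main obstacle is not any single estimate but the organization: one must choose the \emph{right} reorganization for each quantity (global recentering at $x$ for \eqref{eq:techIneq1}--\eqref{eq:techIneq2}, local refinement differences for \eqref{eq:techIneq3}) so that the available coherence exponent suffices termwise, and one must track the $L^2$-versus-$L^1$ normalization and the direction ordering carefully enough that $B$ genuinely delivers $\pi_i(x)\le\pi_i(\cdot)$. It is worth stressing that \eqref{eq:techIneq3} succeeds only because conditioning lets us invoke the stronger exponent $\gamma$ of \eqref{coherence2} on local increments; the analogous unconditioned bound on $(f^{(n+1)}-f^{(n)})(P_n\psi_x^\lambda)$, where only the weaker exponent $\gamma-\frac E2$ of \eqref{coherence} is available and may be negative, cannot be obtained by the triangle inequality and genuinely requires the cancellation of the BDG-type inequality \eqref{BDG}, which is why it is deferred to Lemma \ref{lem:technical2}.
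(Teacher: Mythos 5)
Your proposal is correct and takes essentially the same route as the paper's proof: the same wavelet reorganizations (recentering all germ differences at $x$ for \eqref{eq:techIneq1}--\eqref{eq:techIneq2}, and the local self-replication/refinement identity for \eqref{eq:techIneq3}), the same termwise use of \eqref{coherence} and \eqref{coherence2} justified by the positive supports together with the choice of $B$, and the same summation via the estimates \eqref{LemIneq1}--\eqref{LemIneq2} and the count of $O((2^n\lambda)^{\abs S})$ non-zero terms. Your explicit tower-property/conditional-Jensen step, passing from conditioning at $\pi_i(w)$ (where \eqref{coherence2} applies) to conditioning at $\pi_i(x)$, is a point the paper's write-up glosses over, and spelling it out is a clarification rather than a deviation.
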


\begin{proof}
Let us start with inequality \eqref{eq:techIneq1}. Since $\hat P_n\psi\in (V^n)^{\perp V_{n+1}}$, it follows that $f^{(n)}(\hat P_n\psi) = 0$. Using the definition of $f^{(n)}$ as well as $F_x(\hat\phi_y^n) = \sum_{z\in\Delta_{n+1}}F_x(\phi_z^{n+1})\scalar{\phi_z^{n+1},\hat\phi_y^m}$, we get

\begin{align*}
(f^{(n+1)}-f^{(n)}-F_x)(\hat P_n\psi_x^\lambda) &= \sum_{y\in\Delta_n}\sum_{\hat\phi\in\Phi}(f^{(n+1)}-F_x)(\hat\phi_y^n)\scalar{\hat\phi_y^n,\psi_x^\lambda}\\
&= \sum_{y\in\Delta_n}\sum_{z\in\Delta_{n+1}}\sum_{\hat\phi\in\Phi}(F_z-F_x)(\phi_z^{n+1})\scalar{\phi_z^{n+1},\hat\phi_y^n}\scalar{\hat\phi_y^n,\psi_x^\lambda}\,.
\end{align*}

\noindent For all non-zero terms in this sum, the supports of $\hat\phi_y^n$ and the support of $\psi_x^\lambda$ need to overlap, as well as the supports of $\phi_z^{n+1}$ and $\hat\phi_z^n$. Together with $\supp(\psi)\in [\tilde C,\infty)^d$ and $\supp(\phi), \supp(\hat\phi)\in[C,R]$, this implies that all non-zero terms fulfill $\pi_i(z)\ge \pi_i(y)-2^{-(n+1)s_i}R \ge \pi_i(x)+\lambda^{s_i}\cdot \tilde C- 2^{-ns_i}\cdot 2R\ge \pi_i(x)$ by our choice of $\lambda$ for $i=1\dots,e$, so we can use stochastic coherence. Note that the number of $z$, such that $\scalar{\phi_z^{n+1},\hat\phi_y^{n}}\neq 0$ for any fixed $y$ is of order $1$, and the number of $y$, such that $\scalar{\hat\phi_y^n,\psi_x^\lambda}\neq 0$ is of order $2^{n\abs S}\lambda^{\abs S}$. Furthermore, all non-zero terms will have $\abs{z-x}\lesssim\lambda+2^{-n}\lesssim\lambda$. Applying \eqref{LemIneq1} to $\abs{\scalar{\phi_z^{n+1},\hat\phi_y^n}}$ as well as \eqref{LemIneq2} to $\abs{\scalar{\hat\phi_y^n,\psi_x^\lambda}}$ and using stochastic coherence, we get:

\begin{align*}
\norm{\hat P_n(f^{(n+1)}-f^{(n)}-F_x)(\psi_x^\lambda)}_{L_p} &\le \sum_{y\in\Delta_n}\sum_{z\in\Delta_{n+1}}\sum_{\hat\phi\in\Phi}\underbrace{\norm{(F_z-F_x)(\phi_z^{n+1})}_{L_p}}_{\lesssim 2^{-n\frac {\abs S}2 -n\alpha}\lambda^{\gamma-\frac E2-\alpha}}\\&\qquad\qquad\qquad\qquad\times\underbrace{\abs{\scalar{\phi_z^{n+1},\hat\phi_y^n}}}_{\lesssim 1}\underbrace{\abs{\scalar{\hat\phi_y^n,\psi_x^\lambda}}}_{\lesssim 2^{-n\frac {\abs S}2-n\tilde r}\lambda^{-{\abs S}-\tilde r}\norm{\psi}_{C_c^r}} \\&\lesssim 2^{-n\alpha-n\tilde r}\lambda^{\gamma-\frac E2-\alpha-\tilde r}\norm{\psi}_{C_c^r}\,,
\end{align*}

\noindent which shows \eqref{eq:techIneq1}. \eqref{eq:techIneq2} follows analogously. To show \eqref{eq:techIneq3}, we decompose 

\begin{equation}\label{eq:techEqual1}
(f^{(n+1)}-f^{(n)})(P_n\psi_x^\lambda) = \sum_{y\in\Delta_n}(f^{(n+1)}-f^{(n)})(\phi_y^n)\scalar{\phi_y^n,\psi_x^\lambda}
\end{equation}

\noindent and calculate

\begin{align}
(f^{(n+1)}-f^{(n)})(\phi_x^n) &= \sum_{z\in\Delta_{n+1}}F_z(\phi_z^{n+1})\scalar{\phi_z^{n+1},\phi_x^{n}}-\sum_{y\in\Delta_n}F_y(\phi_y^n)\scalar{\phi_y^n,\phi_x^n}\nonumber\\ 
&=\sum_{z\in\Delta_{n+1}}\sum_{k\in\Delta_{n+1}}F_z(\phi_z^{n+1})\scalar{\phi_z^{n+1},a_k^{(n+1)}\phi_{x+k}^{n+1}} -F_x(\phi_x^n) \nonumber\\
&= \sum_{k\in\Delta_{n+1}} a_k^{(n+1)} F_{x+k}(\phi_{x+k}^{(n+1)}) - \sum_{k\in\Delta_{n+1}} a_k^{(n+1)} F_{x}(\phi_{x+k}^{(n+1)})\nonumber\\
&=\sum_{k\in\Delta_{n+1}} a_k^{(n+1)}(F_{x+k}-F_x)(\phi_{k+x}^{(n+1)}).\label{eq:(III)Sum}
\end{align}

\noindent Note that $a_k^{(n+1)} \neq 0$ only for $\abs k\le dR2^{-n}$, so the sum is finite. Furthermore, $\pi_i(k)\ge 0$ for $i=1,\dots,n$ and all non-zero terms and the support of $\phi$ is positive, so we can use stochastic reconstruction to show:

\begin{align}
\norm{E^{\mathcal F^{(i)}_{\pi_i(x)}}(f^{(n+1)}-f^{(n)})(\phi_x^n)}_{L_p}&\le \sum_{k\in\Delta_{n+1}} \abs{a_k^{(n+1)}}\norm{E^{\mathcal F_{\pi_i(x)}^{(i)}}(F_{x+k}-F_x)(\phi_{k+x}^{(n+1)})}_{L_p}\nonumber\\
&\lesssim 2^{-\frac{(n+1)\abs S}{2}-\alpha(n+1)}(\abs k+2^{-n-1})^{\gamma-\alpha}\nonumber\\
&\lesssim 2^{-n\gamma-n\frac{\abs S}{2}}\,.\label{ineq2}
\end{align}

\noindent Putting this into \eqref{eq:techEqual1} gives us

\begin{align*}
\norm{E^{\mathcal F^{(i)}_{\pi_i(x)}}(f^{(n+1)}-f^{(n)})(P_n\psi_x^\lambda)}_{L_p} &\le \sum_{y\in\Delta_n}\norm{E^{\mathcal F^{(i)}_{\pi_i(x)}}(f^{(n+1)}-f^{(n)})(\phi_x^n)}_{L_p}\underbrace{\abs{\scalar{\phi_y^n,\psi_x^\lambda}}}_{\le2^{-n\frac{\abs S}2}\lambda^{-\abs S}\norm{\psi}_{C_c^r}}\\
&\lesssim 2^{-n\gamma}\norm{\psi}_{C_c^r}\,,
\end{align*}

\noindent where we used \eqref{LemIneq1} as well as the fact that the order of non-zero terms is given by $2^{n\abs S}\lambda^{\abs S}$.
\end{proof}

\noindent The last inequality we need for the main result is a bound on $\norm{(f^{(n+1)}-f^{(n)})(P_n\psi)}$. If one uses the techniques used in Lemma \ref{lem:technical1}, this ends up being $\lesssim 2^{-n(\gamma-\frac E2)}$, which is not good enough for our result. One can improve this bound with the use of the BDG type inequality \eqref{BDG}.

Since we will this technique both in the proof of existence and uniqueness of the reconstruction, let us formulate the technique itself in its own lemma:

\begin{lemma}\label{lem:stochTechnique}
Let $h$ be a random distribution, and assume there exists a constant $\tilde R>0$ such that for $i=1,\dots, e, n\in\mathbb N$ and $x\in\Delta_n$, $h(\phi_x^n)$ is $\mathcal F_{\hat S_i(x)}^{(i)}$-measurable for $\hat S_i(x) = \pi_i(x)+2^{-n s_i}\tilde R$. Further, assume that $h$ fulfills the inequalities

\begin{align}
\norm{h(\phi_x^n)}_{L_p}&\lesssim 2^{-n\gamma-n\frac{\abs S-E}{2}}\label{eq:ineq1}\\
\norm{E^{\mathcal F_{\pi_i(x)}^{(i)}}h(\phi_x^n)}_{L_p}&\lesssim 2^{-n\gamma-n\frac{\abs S}{2}}\,,\label{eq:ineq2}
\end{align}

\noindent for $i=1,\dots, e$. Then it holds that for each $x_0$ in any constant $K\subset\R^d$, $1\le\lambda\le 2^{-n}$ and $\psi\in C_c^r$ with support in $[-\hat R,\hat R]^d$ for some $\hat R>0$, we have

\begin{equation*}
\norm{h(P_n\psi_{x_0}^\lambda)}_{L_p}\lesssim 2^{-n\gamma}\lambda^{-\frac E2}\norm{\psi}_{C_c^r}\,,
\end{equation*}

\noindent where the constant in $\lesssim$ is allowed to depend on $K,\tilde R$ and $\hat R$.
\end{lemma}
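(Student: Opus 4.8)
The plan is to test against the wavelet projection directly. Writing $P_n\psi_{x_0}^\lambda=\sum_{x\in\Delta_n}\scalar{\phi_x^n,\psi_{x_0}^\lambda}\phi_x^n$, I set $g_x:=\scalar{\phi_x^n,\psi_{x_0}^\lambda}h(\phi_x^n)$, so that $h(P_n\psi_{x_0}^\lambda)=\sum_{x\in\Delta_n}g_x$ with \emph{deterministic} coefficients that therefore pull out of every conditional expectation and do not disturb the martingale structure. Only $x$ whose support meets that of $\psi_{x_0}^\lambda$ contribute, so the number of admissible values of $\pi_i(x)$ is $N_i\sim(2^n\lambda)^{s_i}$ and the total count is $\sim(2^n\lambda)^{\abs S}$, while $\abs{\scalar{\phi_x^n,\psi_{x_0}^\lambda}}\lesssim 2^{-n\abs S/2}\lambda^{-\abs S}\norm{\psi}_{C_c^r}$ by \eqref{LemIneq1}. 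Feeding \eqref{eq:ineq1} into the plain triangle inequality yields $2^{-n\gamma+nE/2}\norm{\psi}_{C_c^r}$; the entire point is that this overshoots the target by the factor $(2^n\lambda)^{E/2}=\prod_{i=1}^e N_i^{1/2}$, which I must recover from the measurability hypothesis and from \eqref{eq:ineq2}.

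The mechanism in a single stochastic direction $i$ is the stochastic-sewing split. Ordering the admissible values of $\pi_i(x)$ as a lattice $t_1<t_2<\dots$ of spacing $2^{-ns_i}$ and freezing the remaining coordinates, I write $g_x=E^{\mathcal F^{(i)}_{\pi_i(x)}}g_x+\bigl(g_x-E^{\mathcal F^{(i)}_{\pi_i(x)}}g_x\bigr)$. The conditional-mean part is summed crudely, but now using the sharper bound \eqref{eq:ineq2}; a direct count shows it contributes only $\lesssim 2^{-n\gamma}\norm{\psi}_{C_c^r}$, already below the target since $\lambda\le1$. The centred part is, after summing the remaining coordinates, a sum of martingale differences in $t_k$, because $h(\phi_x^n)$ is $\mathcal F^{(i)}_{\hat S_i(x)}$-measurable and has vanishing $\mathcal F^{(i)}_{\pi_i(x)}$-conditional mean; here the BDG-type inequality \eqref{BDG} replaces the sum by its square function $\bigl(\sum_k\norm{\cdot}_{L_p}^2\bigr)^{1/2}$ (valid since $p\ge2$), trading the count $N_i$ for $N_i^{1/2}$.

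The full estimate proceeds by induction over the stochastic directions $i=1,\dots,e$, applying this split and \eqref{BDG} once per direction. Expanding all splits produces finitely many ($2^e$) branches, indexed by the set of directions in which the centred (square-function) part was kept versus those in which the conditional-mean part was kept. Orthogonality of the filtrations is exactly what keeps the induction alive: since conditioning in distinct directions commutes, a term $E^{\mathcal F^{(j)}_{\pi_j(x)}}g_x$ still satisfies the measurability and the conditional bound required to run the martingale argument in the remaining directions. Bookkeeping the exponents then shows that the branch using the square function in \emph{every} stochastic direction realises exactly $2^{-n\gamma}\lambda^{-E/2}\norm{\psi}_{C_c^r}$, whereas every branch retaining a conditional-mean factor is strictly smaller (each such factor replaces a gain $N_i^{1/2}$ by a power of $\lambda\le1$, via \eqref{eq:ineq2}); summing the $O(1)$ branches gives the claim.

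The main obstacle is twofold, and both parts are genuinely needed for rigour. First, the measurability only holds up to the gap $\hat S_i(x)=\pi_i(x)+2^{-ns_i}\tilde R$, so for $\tilde R\ge1$ the centred increments $h(\phi_x^n)-E^{\mathcal F^{(i)}_{\pi_i(x)}}h(\phi_x^n)$ do \emph{not} form a martingale difference sequence along consecutive lattice points; I resolve this by partitioning the index $k$ into $O(\tilde R)=O(1)$ residue classes modulo $\lceil\tilde R\rceil+1$, within each of which consecutive terms are separated by more than the gap, so the tower property restores the martingale property and \eqref{BDG} applies to each class separately. Second, iterating the one-directional estimate while keeping \eqref{eq:ineq1} and \eqref{eq:ineq2} intact across several distinct filtrations is precisely where the commuting (orthogonality) assumption is indispensable; without it the intermediate conditional expectations would lose all adaptedness in the other directions and the induction would collapse.
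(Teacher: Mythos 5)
Your proposal is correct and takes essentially the same route as the paper's own proof: the same decomposition $g_x = h(\phi_x^n)\scalar{\phi_x^n,\psi_{x_0}^\lambda}$, the same Doob-type split combined with the BDG inequality \eqref{BDG} applied once per stochastic direction (with orthogonality ensuring the partially conditioned terms remain adapted and still satisfy \eqref{eq:ineq1} and \eqref{eq:ineq2}), and the same cure for the measurability gap $\hat S_i(x)=\pi_i(x)+2^{-ns_i}\tilde R$ by splitting $\Delta_n$ into finitely many shifted coarser sub-meshes. The only differences are cosmetic bookkeeping conventions: your $2^e$ branches versus the paper's backward induction (which bounds each conditional-mean branch crudely as soon as it appears), and residue classes modulo $\lceil\tilde R\rceil+1$ versus the paper's $\lceil\tilde R\rceil^e$ shifted nets $\Delta_n^{a,b}(r_n^{(k)})$.
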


\begin{remark}
Note that any $h$ with stochastic dimension $e$ will automatically fulfill the measurability condition given above. The reason we want to work with this weaker condition is that we want to apply the above lemma to $h=f^{(n+1)}-f^{(n)}$, which is in general not adapted but has a ``fattening of the support'' happening. More rigorously, given a test function $\psi$ with support in $(-\infty,y]\times\R^{d-1}$, $(f^{(n+1)}-f^{(n)})(\psi)$ is not $\mathcal F_{\pi_1(y)}^{(1)}$ measurable, but $\mathcal F_{\pi_1(y)+R2^{-ns_1}}^{(1)}$.
\end{remark}

\begin{remark}
We want to stress that this lemma is the only place, where we need \eqref{BDG}. Thus, this can be seen as the technique that gives us the improvement in regularity over classical reconstruction.
\end{remark}

\begin{proof}
We decompose

\begin{equation}\label{def:gx}
h(P_n\psi_{x_0}^\lambda) = \sum_{x\in \Delta_n}\underbrace{h(\phi_x^n)\scalar{\phi_x^n,\psi_{x_0}^\lambda}}_{=:g_x}.
\end{equation}

\noindent Our strategy will be to use \eqref{BDG} on the outermost sum and then iterate this step over all stochastic directions. To this end, we specify the definition 

\begin{equation*}
\Delta_n^{a,b} := \left\{\sum_{i=a}^b 2^{-ns_i}l_i\vec e_i ~\bigg\vert~ l_i\in\mathbb Z\right\}
\end{equation*} 

\noindent to be the rescaled mesh in the variables $a$ to $b$. Unfortunately, naively applying \eqref{BDG} does not work for our sum: If we set $Z_y := \sum_{x\in\Delta_n^{2,d}} g_{(y,x)}$, so that $h(P_n\psi_{x_0}^\lambda) = \sum_{y\in\Delta_n^{1,1}} Z_y$, we recall that $Z_y$ is $\mathcal F^{(1)}_{\hat S_1(y)}$-measurable. \eqref{BDG} would now allow us to condition $Z_y$ onto $\mathcal F_{\hat S_1(y)-2^{-ns_1}}^{(1)}$, which results in a random variable we do not control.

Instead, we are going to split the sum into $\lceil\tilde R\rceil^e$ many sums in which the nets have a bigger mesh, to get the right conditioning. To this end, let $0\le r_i\le \lceil \tilde R\rceil-1, i=1,\dots, e$ be natural numbers and set

\begin{equation*}
r_n^{(k)} := S^{2^{-n}}(0,\dots,0,r_{k+1},\dots,r_{e},0,\dots,0) \in\Delta_n^{k+1,d}
\end{equation*}  

\noindent for $k = 0,\dots, e$. We define the rescaled and shifted net for each such $r_n^{(k)}$ by

\begin{equation*}
\Delta_n^{a,b}(r^{(k)}_n) = \left\{\sum_{i=a}^b \lceil C_i\rceil\cdot 2^{-ns_i}l_i\vec e_i+r_n^{(k)}~\bigg\vert~ l_i\in\mathbb Z\right\}\,.
\end{equation*} 

\begin{figure}
\begin{tikzpicture}
\node(Delta12) at (0, 0) {\includegraphics[scale=1.2]{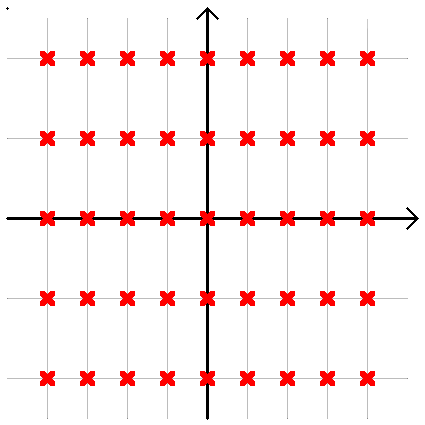}};
\node(BoxDelta12) at (1.5, 1.5) {\colorbox{white}{\fbox{\color{red} $\Delta_n^{1,2}$}}};

\node(Delta11) at (5, 0) {\includegraphics[scale=1.2]{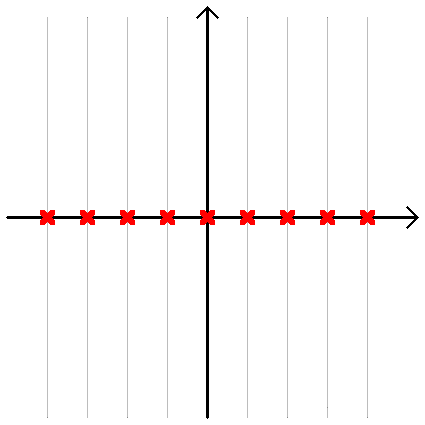}};
\node(BoxDelta11) at (6.5, 1.5) {\colorbox{white}{\fbox{\color{red} $\Delta_n^{1,1}$}}};

\node(Delta22) at (10, 0) {\includegraphics[scale=1.2]{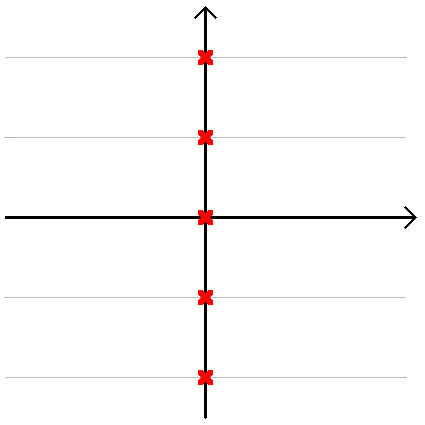}};
\node(BoxDelta22) at (11.5, 1.5) {\colorbox{white}{\fbox{\color{red} $\Delta_n^{2,2}$}}};
\end{tikzpicture}
\end{figure}

\begin{figure}
\begin{tikzpicture}
\node(DeltaR) at (0,0) {\includegraphics[scale=1.2]{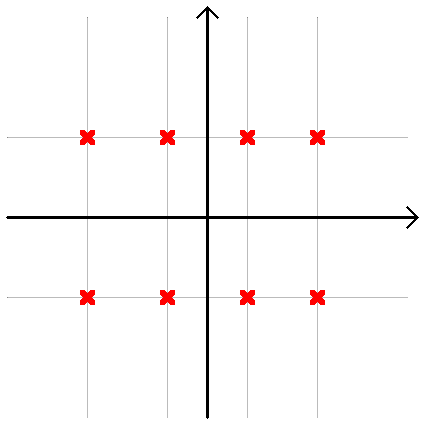}};
\node(BoxDeltaR) at (1.5, 1.5) {\colorbox{white}{\fbox{\color{red} $\Delta_n^{1,2}(r_n^{(0)})$}}};
\end{tikzpicture}
\end{figure}

\noindent As these definitions are rather technical, we have illustrated $\Delta_n^{1,2}, \Delta_n^{1,1}, \Delta_n^{2,2}$ as well as $\Delta_n^{1,2}(r_n^{(0)})$ in the case $e=d=2$, $\lceil \tilde R \rceil = 2$ and $r=(1,1)$. We split the expression

\begin{equation*}
h(P_n\psi_{x_0}^\lambda) = \sum_{r_n^{(0)}} \sum_{x\in\Delta_n^{1,d}(r_n^{(0)})}g_x
\end{equation*}

\noindent into $\lceil\tilde R\rceil^e$ many sums, where the first sum runs over all possible $r_n^{(0)}$. It suffices to find a bound for the sum over $x$ as the sum over $r_n^{(0)}$ will merely add the constant factor $\lceil \tilde R\rceil^e$. Using \eqref{BDG}, we now do indeed get

\begin{align}
\norm{\sum_{x\in\Delta_n^{1,d}(r_n^{(0)})}g_x}_{L_p} \lesssim  &\sum_{y\in\Delta_n^{1,1}(r_1)}\norm{\sum_{x\in\Delta_n^{2,d}(r_n^{(1)})}E^{\mathcal F^{(1)}_y} g_{(y,x)}}_{L_p}\nonumber\\&+\left(\sum_{y\in\Delta_n^{1,1}(r_1)}\norm{\sum_{x\in\Delta_n^{2,d}(r_n^{(1)})}g_{(y,x)}-E^{\mathcal F^{(1)}_y}g_{(y,x)}}_{L_p}^2\right)^{\frac 12},\label{iteration}
\end{align}

\noindent We want to iterate this step over all stochastic directions. To this end, we use the simplified notation $(y,x):= y+x$ for $y\in\Delta_n^{a,b}, x\in\Delta_n^{c,d}$ with $[a,b]\cup[c,d]=\emptyset$ and define

\begin{align*}
g_x &:= h(\phi_x^n)\scalar{\phi_x^n,\psi_{x_0}^\lambda} &&x\in\Delta_n^{1,d}\\
g_{y,x} &:= g_{(y,x)}-E^{\mathcal F^{(1)}_y}g_{(y,x)} &&x\in\Delta_n^{2,d}, y\in\Delta_n^{1,1} \\
g_{y_1,y_2,x} &= g_{y_1,(y_2,x)}-E^{\mathcal F^{(2)}_{y_2}}g_{y_1,(y_2,x)} &&x\in\Delta_n^{3,d}, y_1\in\Delta^{1,1}_n, y_2\in\Delta_n^{2,2}\\ &\vdots \\
g_{y_1,\dots,y_e,x} &:=  g_{y_1,\dots,y_{e-1}(y_e,x)}-E^{\mathcal F^{(e)}_{y_e}}g_{y_1,\dots,y_{e-1}(y_e,x)} &&x\in\Delta_n^{e+1,d}, y_1\in\Delta_n^{1,1},\dots,y_e\in\Delta_n^{e,e},
\end{align*}

\noindent with the goal of finding bounds for $\norm{\sum_{x\in\Delta_n^{i+1,d}(r_n^{(i)})}g_{y_1,\dots,y_i,x}}_{L_p}$ using a backward induction over $i=0,\dots,e$. Note that since $(\mathcal F_{t}^{(i)})_t$, $i=1,\dots,e$ are orthogonal, $g_{y_1,\dots,y_i,x}$ has the same adaptedness properties as $g_x$ for $i=1,\dots,e$, so we can apply \eqref{BDG} to $\norm{\sum_{x\in\Delta_n^{i+1,d}(r_n^{(i)})}g_{y_1,\dots,y_i,x}}_{L_p}$. 

Recall, that \eqref{LemIneq1} gives us

\begin{equation}
\abs{\scalar{\phi_x^n,\psi_{x_0}^\lambda}}\lesssim 2^{-\frac{n\abs S}2}\lambda^{-\abs S}\norm{\psi}_{C_c^r}.
\end{equation}  

\noindent Together with \eqref{eq:ineq1}, this implies 

\begin{align}
\norm{g_x}_{L_p} &\le \norm{h(\phi_{x}^n)}_{L_p}\abs{\scalar{\phi_x^n,\psi_{x_0}^\lambda}} \nonumber\\
&\lesssim 2^{-n\gamma-n\frac{\abs S-E}{2}}2^{-n\frac {\abs S}2}\lambda^{-\abs S}\norm{\psi}_{C_c^r} \nonumber\\
&= 2^{-n\gamma-n\abs S+n\frac E2}\lambda^{-\abs S}\norm{\psi}_{C_c^r},\label{g1}
\end{align}

\noindent and analogously, using \eqref{eq:ineq2}

\begin{equation}\label{g2}
\norm{E^{\mathcal F^{(1)}_y}g_{(y,x)}}_{L_p} \lesssim 2^{-n\gamma-n\abs S}\lambda^{-\abs S}\norm{\psi}_{C_c^r}.
\end{equation}

\noindent Observe that for any $i=1,\dots,e$, $g_{y_1,\dots,y_i,x}$ is just a finite linear combination of $g_{(y_1,\dots,y_i,x)}$ conditioned on different sigma-algebras. By the contraction properties of the conditional expectation, \eqref{g1} and \eqref{g2} thus hold for any $g_{y_1,\dots,y_i,x}, i=0,\dots, e$.

We claim, that the following holds for $i=0,\dots,e$:

\begin{equation*}
\norm{\sum_{x\in\Delta_n^{i+1,d}(r_n^{(i)})} g_{y_1,\dots,y_i,x}}_{L_p}\lesssim 2^{-n\gamma-n\frac {E_i}2}\lambda^{-\frac {E+E_i}{2}}\norm{\psi}_{C_c^r},
\end{equation*}

\noindent where $E_i := \sum_{j=1}^i s_j$. As said above, we show this claim by backwards induction. Note that the $g_x\neq 0$ implies $\scalar{\phi_x^n,\psi_{x_0}^\lambda}\neq 0$, which is only true for $\cong 2^{ns_i}\lambda^{s_i}$ many $x$ for every coordinate $i = 1,\dots,d$. Thus, using \eqref{g1}, we get the induction start

\begin{equation}\label{g3}
\sum_{x\in\Delta_n^{e+1,d}(r_n^{(e)})}\norm{g_{y_1,\dots,y_e,x}}_{L_p} \lesssim 2^{-n\gamma-n\abs S+n\frac E2}\lambda^{-\abs S}\cdot2^{n(\abs S-E)}\lambda^{\abs S-E}\norm{\psi}_{C_c^r} = 2^{-n\gamma-n\frac E2}\lambda^{-E}\norm{\psi}_{C_c^r}.
\end{equation}

\noindent For the induction step, observe that

\begin{align*}
\sum_{x\in\Delta_n^{i+1,d}(r_n^{(i)})}\norm{E^{\mathcal F^{(i)}_{y_i}}g_{y_1,\dots,y_{i-1},(y_i,x)}}_{L_p} &\lesssim 2^{-n\gamma-n\abs S}\lambda^{-\abs S}\cdot2^{n(\abs S-E_i)}\lambda^{\abs S-E_i}\norm{\psi}_{C_c^r} \\&= 2^{-n\gamma -nE_i} \lambda^{-E_i}\norm{\psi}_{C_c^r}
\end{align*}

\noindent holds, and that summing over the squares and taking the square root has the effect of multiplying a factor of $2^{\frac {s_in}2}\lambda^{\frac {s_i}2}$:

\begin{equation*}
\left(\sum_{k=1}^{\lfloor 2^{s_in}\lambda^{s_i}\rfloor} a^2\right)^{\frac 12} \le 2^{\frac {s_in}2}\lambda^{\frac{s_i}{2}}a.
\end{equation*}

\noindent Now, using \eqref{iteration}, we get

\begin{align*}
\norm{\sum_{x_\in\Delta_n^{i+1,d}(r_n^{(i)})} g_{y_1,\dots,y_i,x}}_{L_p} &\lesssim \underbrace{\sum_{y_{i+1}\in\Delta_n^{i+1,i+1}(\pi_{i+1}(r_n))}\sum_{x\in\Delta_n^{i+2,d}(r_n^{(i+1)})} \norm{E^{\mathcal F^{(i+1)}_{y_{i+1}}} g_{y_1,\dots,y_i,(y_{i+1},x)}}_{L_p}}_{\lesssim 2^{-n\gamma-nE_i}\lambda^{-E_i}\norm{\psi}_{C_c^r}} \\&\qquad+ \Bigg(\sum_{y_{i+1}\in\Delta_n^{i+1,i+1}(\pi_{i+1}(r_n))}\underbrace{\norm{\sum_{x\in\Delta_n^{i+2,d}(r_n^{(i+1)})}g_{y_1,\dots,y_{i+1},x}}_{L_p}^2}_{\lesssim \left(2^{-n\gamma-n\frac{E_{i+1}}2}\lambda^{-\frac{E+E_{i+1}}{2}}\norm{\psi}_{C_c^r}\right)^2}\Bigg)^{\frac 12} \\
&\lesssim 2^{-n\gamma-n\frac {E_i}2}\lambda^{-\frac{E+E_i}{2}}\norm{\psi}_{C_c^r}.
\end{align*}

\noindent This shows the claim. Therefore, $i = 0$ implies

\begin{equation*}
\norm{h(P_n\psi_{x_0}^\lambda)}_{L_p} = \norm{\sum_{x\in\Delta_n}g_x}_{L_p} \lesssim \sum_{r_n^{(0)}}\norm{\sum_{x\in\Delta_n^{1,d}(r_n^{(0)})}g_x}_{L_p} \lesssim 2^{-n\gamma}\lambda^{-\frac E2}\norm{\psi}_{C_c^r},
\end{equation*}

\noindent which finishes the proof of the lemma.
\end{proof}

\noindent It is now immediate to show the last bound we need:

\begin{lemma}\label{lem:technical2}
Let $(F_x)$ be as in Theorem \ref{stochastic_reconstruction}. Then it holds that for any $x_0$ in some compact $K\subset\R^d$, $1\le\lambda\le 2^{-n}$ and $\psi\in C_c^r$ with support in $[-\tilde R,\tilde R]^d$ for some $\tilde R>0$, we have:

\begin{equation}\label{eq:techIneq5}
\norm{(f^{(n+1)}-f^{(n)})(P_n\psi_{x_0}^\lambda)}_{L_p} \lesssim 2^{-n\gamma}\lambda^{-\frac E2}\norm{\psi}_{C_c^r}.
\end{equation}

\noindent There the constant in $\lesssim$ is allowed to depend on $K$ and $\tilde R$.
\end{lemma}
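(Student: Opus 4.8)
The plan is to obtain \eqref{eq:techIneq5} as an immediate consequence of Lemma \ref{lem:stochTechnique} applied to $h := f^{(n+1)} - f^{(n)}$: the asserted bound is exactly the conclusion of that lemma (with its $\hat R$ taken to be $\tilde R$ and the same range of $\lambda$), so the whole proof reduces to checking the three hypotheses of Lemma \ref{lem:stochTechnique} at the single level $n$ appearing in $P_n$. The common starting point for all three checks is the refinement identity \eqref{eq:(III)Sum},
\begin{equation*}
(f^{(n+1)} - f^{(n)})(\phi_x^n) = \sum_{k \in \Delta_{n+1}} a_k^{(n+1)}(F_{x+k} - F_x)(\phi_{k+x}^{n+1}),
\end{equation*}
a finite sum in which $a_k^{(n+1)} \neq 0$ forces $\abs{k} \lesssim 2^{-n}$, and along the stochastic directions $\pi_i(k) \ge 0$ for $i = 1, \dots, e$.

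First I would verify the fattened measurability hypothesis. Since $\phi_{k+x}^{n+1}$ has support bounded in direction $i$ by $\pi_i(x+k) + 2^{-(n+1)s_i}R$ and $0 \le \pi_i(k) \lesssim 2^{-ns_i}$ on the surviving terms, the stochastic-dimension property \eqref{measurability} shows that both $F_{x+k}(\phi_{k+x}^{n+1})$ and $F_x(\phi_{k+x}^{n+1})$ are $\mathcal F^{(i)}_{\pi_i(x) + 2^{-ns_i}\tilde R}$-measurable for a single constant $\tilde R$ depending only on $R$, $d$ and $s$. This is precisely the situation Lemma \ref{lem:stochTechnique} was tailored to: $h$ is not genuinely of stochastic dimension $e$, but only adapted up to the $2^{-ns_i}\tilde R$ fattening of the support.

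Next, the conditional bound \eqref{eq:ineq2} needs no new computation, being literally inequality \eqref{ineq2} established inside the proof of Lemma \ref{lem:technical1}. For the unconditional bound \eqref{eq:ineq1} I would rerun the same estimate with \eqref{coherence} in place of \eqref{coherence2}: because $\pi_i(x) \le \pi_i(x+k)$ on the surviving terms, coherence applies to each $(F_{x+k} - F_x)(\phi_{k+x}^{n+1}) = -(F_x - F_{x+k})(\phi_{k+x}^{n+1})$ and gives
\begin{equation*}
\norm{(F_{x+k} - F_x)(\phi_{k+x}^{n+1})}_{L_p} \lesssim 2^{-(n+1)\abs{S}/2}\, 2^{-(n+1)\alpha}\,(\abs{k} + 2^{-(n+1)})^{\gamma - \frac E2 - \alpha},
\end{equation*}
where the prefactor $2^{-(n+1)\abs{S}/2}$ records the passage from the $L^2$-normalized wavelet to the $L^1$-normalized localization underlying the coherence estimate. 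Summing the $O(1)$ surviving terms, each of order $2^{-n(\gamma - E/2 - \alpha)}$ in the last factor, and using $\sum_k \abs{a_k^{(n+1)}} \lesssim 1$, the $2^{\pm n\alpha}$ contributions cancel and the exponent collapses to $2^{-n\gamma - n(\abs{S} - E)/2}$, which is \eqref{eq:ineq1}.

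With the three hypotheses in hand, Lemma \ref{lem:stochTechnique} yields \eqref{eq:techIneq5} directly. I expect no genuine obstacle: the real work — the iterated BDG estimate producing the $\lambda^{-E/2}$ gain — is entirely encapsulated in Lemma \ref{lem:stochTechnique}, so this lemma is pure bookkeeping. The only points warranting mild care are tracking the normalization factor $2^{-(n+1)\abs{S}/2}$ correctly and confirming that the fattened supports all fit below $\pi_i(x) + 2^{-ns_i}\tilde R$ for one uniform $\tilde R$; both are routine.
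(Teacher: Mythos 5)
Your proposal is correct and follows exactly the paper's own route: the paper likewise proves the unconditional bound \eqref{ineq1} ``analogously to \eqref{ineq2}'' via the refinement identity \eqref{eq:(III)Sum} and coherence \eqref{coherence}, notes that $(f^{(n+1)}-f^{(n)})(\phi_x^n)$ is $\mathcal F^{(i)}_{\pi_i(x)+(2^{-ns_i}+2^{-(n+1)s_i})R}$-measurable, and then invokes Lemma \ref{lem:stochTechnique} with $h=f^{(n+1)}-f^{(n)}$. Your write-up merely makes explicit the wavelet-normalization and cancellation-of-$\alpha$ bookkeeping that the paper leaves implicit, and these details are correct.
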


\begin{proof}
Analogously to \eqref{ineq2}, one shows that

\begin{equation} \label{ineq1}
\norm{(f^{(n+1)}-f^{(n)})(\phi_x^n)}_{L_p} \lesssim 2^{-n\gamma-n\frac{\abs S-E}{2}}\,.
\end{equation}

\noindent Observe that $(f^{(n+1)}-f^{(n)})(\phi_x^n)$ is $\mathcal F^{(i)}_{\hat S_i(x)}$-measurable for $\hat S_i(x) := \pi_i(x) +(2^{-ns_i}+2^{-(n+1)s_i})R$. This together with \eqref{ineq1} and \eqref{ineq2} shows that $h = f^{(n+1)}-f^{(n)}$ fulfills the conditions of Lemma \ref{lem:stochTechnique}, which immediately gives us \eqref{eq:techIneq5}.

\end{proof}

\noindent We proceed with the proof of properties 1 and 2. To do so, we first make the additional assumption that $F_x$ is a random distribution with enough regularity to invoke the second part of Lemma \ref{lem:PnPsiConverges}. Without this assumption, the reconstructing sequence \eqref{eq:reconstructingSequence} does not converge, but it is still possible to construct a distribution $f$ such that property 2 holds. This construction will be shown in Lemma \ref{lem:CbetaUnnessecary}.

\begin{lemma}\label{lem:prop1+2}
Let $(F_x)$ be as in Theorem \ref{stochastic_reconstruction} and assume that for all $x\in\R^d$, $F_x\in C^\beta(L_p)$ with $\beta+\tilde r>0$. Then for each $\psi\in C_c^r$, $f^{(n)}(\psi)$ converges against a random variable $f(\psi)$ in $L_p(\Omega)$ and $f:C_c^r\to L_p(\Omega)$ is a random distribution with stochastic dimension $e$ and fulfills \eqref{unique1} and \eqref{unique2}. 
\end{lemma}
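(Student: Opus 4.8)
The plan is to prove convergence (property~1) and the two bounds \eqref{unique1}, \eqref{unique2} (property~2) together, using the reorganisation built into Lemma \ref{lem:technical1}, namely subtracting $F_x$ inside the wavelet pieces. Throughout I combine the telescoping $f^{(l)}-f^{(n)}=\sum_{k=n}^{l-1}(f^{(k+1)}-f^{(k)})$ with the wavelet split $\psi=P_k\psi+\sum_{m\ge k}\hat P_m\psi$, observing that $(f^{(k+1)}-f^{(k)})(\psi)=(f^{(k+1)}-f^{(k)})(P_k\psi)+(f^{(k+1)}-f^{(k)})(\hat P_k\psi)$ because $f^{(k+1)}-f^{(k)}$ pairs with a test function only through the coefficients $\scalar{\phi_z^{k+1},\cdot}$ and $\scalar{\phi_y^k,\cdot}$, hence annihilates $\sum_{m>k}\hat P_m\psi$; likewise $f^{(N)}(\psi)=f^{(N)}(P_N\psi)$ for every $\psi$.

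First I establish convergence for a general $\psi\in C_c^r$ and the distribution property. By translating $\psi$ (which preserves $\norm{\psi}_{C_c^r}$ and keeps the base point in a compact set) I may assume $\supp(\psi)\subset[\tilde C,\infty)^e\times\R^{d-e}$, take $\lambda=1$ and base point $x_0$, which is exactly the configuration of Lemmas \ref{lem:technical1} and \ref{lem:technical2}. I bound $\norm{(f^{(k+1)}-f^{(k)})(\psi)}_{L_p}$ by three contributions: the $P_k$-part by \eqref{eq:techIneq5}, giving $\lesssim 2^{-k\gamma}$; the centred wavelet part $(f^{(k+1)}-f^{(k)}-F_{x_0})(\hat P_k\psi)$ by \eqref{eq:techIneq1}, giving $\lesssim 2^{-k(\alpha+\tilde r)}$; and the leftover $F_{x_0}(\hat P_k\psi)$ by the wavelet estimate \eqref{LemIneq2} together with $F_{x_0}\in C^\beta(L_p)$ and the vanishing moments of $\hat\phi$, giving $\lesssim 2^{-k(\beta+\tilde r)}$. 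Since $\gamma>0$, $\alpha+\tilde r>0$ and $\beta+\tilde r>0$, these are summable, so $(f^{(n)}(\psi))_n$ is Cauchy and converges to some $f(\psi)$; the same bounds plus $\norm{f^{(k_0)}(\psi)}_{L_p}\lesssim\norm{\psi}_{C_c^r}$ (a locally finite sum of random distributions) yield $\norm{f(\psi)}_{L_p}\lesssim\norm{\psi}_{C_c^r}$, so $f$ is a random distribution. For the stochastic dimension, if $\supp(\psi)\subset\R^{i-1}\times(-\infty,y]\times\R^{d-i}$ then each nonzero term $F_x(\phi_x^n)\scalar{\phi_x^n,\psi}$ is $\mathcal F^{(i)}_{y+R2^{-ns_i}}$-measurable by \eqref{measurability}, hence so is $f^{(n)}(\psi)$; passing to the $L_p$-limit and using completeness and right-continuity of $(\mathcal F^{(i)}_t)_t$ shows $f(\psi)$ is $\mathcal F^{(i)}_{y}$-measurable.

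For property~2, fix a positive-support $\psi$ with $\norm{\psi}_{C_c^r}=1$ and $\lambda\in(0,1]$, and let $N=N(\lambda)$ be minimal with $B2^{-N}\le\lambda$, so $2^{-N}$ is comparable to $\lambda$. Writing $f-F_x=(f^{(N)}-F_x)+\sum_{k\ge N}(f^{(k+1)}-f^{(k)})$, splitting each increment into its $P_k$- and $\hat P_k$-parts, and using that $\sum_{k\ge N}F_x(\hat P_k\psi_x^\lambda)=F_x(\psi_x^\lambda)-F_x(P_N\psi_x^\lambda)$ (justified by the second part of Lemma \ref{lem:PnPsiConverges} since $F_x\in C^\beta(L_p)$), the leftover $F_x$-terms telescope and combine with $(f^{(N)}-F_x)(\psi_x^\lambda)$ into a single boundary term, giving
\begin{align*}
(f-F_x)(\psi_x^\lambda)&=(f^{(N)}-F_x)(P_N\psi_x^\lambda)+\sum_{k\ge N}(f^{(k+1)}-f^{(k)})(P_k\psi_x^\lambda)\\
&\quad+\sum_{k\ge N}(f^{(k+1)}-f^{(k)}-F_x)(\hat P_k\psi_x^\lambda).
\end{align*}
The boundary term equals $\sum_{y\in\Delta_N}(F_y-F_x)(\phi_y^N)\scalar{\phi_y^N,\psi_x^\lambda}$; by positive support and the choice of $N$ every nonzero $y$ satisfies $\pi_i(y)\ge\pi_i(x)$, so \eqref{coherence} (together with \eqref{LemIneq1} and the order $2^{N\abs S}\lambda^{\abs S}$ of summands) gives $\lesssim\lambda^{\gamma-\frac E2}$. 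The two sums are controlled by \eqref{eq:techIneq5} ($\sum_{k\ge N}2^{-k\gamma}\lambda^{-\frac E2}\lesssim\lambda^{\gamma-\frac E2}$) and \eqref{eq:techIneq1} ($\sum_{k\ge N}2^{-k(\alpha+\tilde r)}\lambda^{\gamma-\frac E2-\alpha-\tilde r}\lesssim\lambda^{\gamma-\frac E2}$), using $\gamma>0$ and $\alpha+\tilde r>0$; this is \eqref{unique1}. Applying $E^{\mathcal F^{(i)}_{\pi_i(x)}}$ to the same decomposition (legitimate, as all series converge in $L_p$ and conditioning is an $L_p$-contraction) and invoking \eqref{coherence2}, \eqref{eq:techIneq3} and \eqref{eq:techIneq2} respectively, each piece sums to $\lesssim\lambda^\gamma$, which is \eqref{unique2}.

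The main obstacle is not any single estimate but the bookkeeping of the telescoping/wavelet double series: one must justify the rearrangement into the boundary term plus the two tails, which relies on the absolute $L_p$-summability established above together with the convergence $P_N\psi_x^\lambda\to\psi_x^\lambda$ (equivalently $\sum_m\hat P_m\psi_x^\lambda=\psi_x^\lambda-P_N\psi_x^\lambda$) when tested against the $C^\beta(L_p)$-distribution $F_x$ — precisely where the hypothesis $F_x\in C^\beta(L_p)$ with $\beta+\tilde r>0$ and Lemma \ref{lem:PnPsiConverges} are needed. The only genuinely probabilistic subtlety is the stochastic-dimension claim, where the fattening $y+R2^{-ns_i}$ of the measurability is absorbed in the limit by right-continuity of the filtrations. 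This auxiliary regularity assumption on $F_x$ is removed afterwards in Lemma \ref{lem:CbetaUnnessecary}.
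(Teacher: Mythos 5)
Your proposal is correct and follows essentially the same route as the paper: the same decomposition into the boundary term $(f^{(N)}-F_x)(P_N\psi_x^\lambda)$, the $P_k$-increments controlled by Lemma \ref{lem:technical2}, and the centred $\hat P_k$-increments controlled by Lemma \ref{lem:technical1}, with the hypothesis $F_x\in C^\beta(L_p)$ entering through Lemma \ref{lem:PnPsiConverges} and the measurability claim settled by completeness and right-continuity of the filtrations. The only cosmetic difference is that you prove Cauchy-ness of $f^{(n)}(\psi)$ directly by bounding the extra term $F_{x_0}(\hat P_k\psi)\lesssim 2^{-k(\beta+\tilde r)}$, whereas the paper shows $(f^{(n)}-P_nF_x)(\psi)$ is Cauchy and then adds the convergent sequence $F_x(P_n\psi)$ — the same estimate in a different order.
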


\begin{proof}
The idea of the proof is to show that $(f^{(l)}-P_l F_x)(\psi_x^\lambda)$ converges to some random variable in $L_p$ as $l\to\infty$ and that its $L_p$-norm is bound by $\lambda^{\gamma-\frac E2}$, while the $L_p$- norm of its conditional expectation is bound by $\lambda^\gamma$. Since $P_lF_x\to F_x$ is convergent thanks to the additional assumption and Lemma \ref{lem:PnPsiConverges}, it follows that $f^{(l)}$ converges to some $f$, and that $(f-F_x)(\psi_x^\lambda) = \lim_{l\to\infty} (f^{(l)}-P_l F_x)(\psi_x^\lambda)$ fulfills \eqref{unique1} and \eqref{unique2}.

Let us fix a test function $\psi$ with support in $[\tilde C,\infty)^d$ for some $\tilde C>0$, an $x\in K$ as well as a $\lambda>0$. We choose $n$ in such a way, that $B2^{-n}\le\lambda\le B2^{-n+1}$ for $B := \min_{i=1,\dots,d}\left(\frac {\tilde C}{2R}\right)^{-\frac 1{s_i}}$. For $l\ge n$, we decompose $f^{(l)} = f^{(n)}+\sum_{m=n}^{l-1}(f^{(m+1)}-f^{(m)})$ as well as $P_l\psi = P_n\psi +\sum_{m=n}^{l-1} \hat P_m\psi$ to get the decomposition

\begin{align}
f^{(l)}(\psi_x^\lambda)-F_x(P_l\psi_x^\lambda) &= \underbrace{(f^{(n)}-P_nF_x)(\psi_x^\lambda)}_{(I)}+\sum_{m=n}^{l-1}\underbrace{\hat P_m(f^{(m+1)}-f^{(m)}-F_x)(\psi_x^\lambda)}_{(II)_m}\nonumber\\ &\qquad+\sum_{m=n}^{l-1}\underbrace{P_m(f^{(m+1}-f^{(m)})(\psi_x^\lambda)}_{(III)_m}\,,\label{eq:decomp1}
\end{align}

\noindent where we used $f^{(l)}(\psi_x^\lambda) = f^{(l)}(P_l \psi_x^\lambda)$, which follows from $f^{(l)}\in V_l$ \eqref{Vl}. $(II)_m$ and $(III)_m$ are bound by Lemma \ref{lem:technical1} and \ref{lem:technical2}, so we only need to find a bound for $(I)$. Using \eqref{LemIneq1}, \eqref{LemIneq2} as well as stochastic coherence, the following holds:

\begin{align*}
\norm{(I)}_{L_p} &\le \sum_{y\in\Delta_n}\norm{(F_y-F_x)(\phi_y^n)}_{L_p}\abs{\scalar{\phi_y^n,\psi_x^\lambda}}\nonumber\\
&\lesssim \lambda^{\gamma-\frac E2}\norm{\psi}_{C_c^r}\,.
\end{align*}

\noindent Note that the positive support of $\psi$ together with our choice of $n$ implies $\pi_i(y) \ge \pi_i(x)+\lambda^{s_i}\tilde C -2^{-ns_i}R \ge \pi_i(x)$ by our choice of $n$, which justifies our use of stochastic coherence. Analogously

\begin{equation*}
\norm{E^{\mathcal F_{\pi_i(x)}^{(i)}}(I)}_{L_p}\lesssim \lambda^\gamma\norm{\psi}_{C_c^r}\,.
\end{equation*}

\noindent We now have all inequalities we need and want to argue that $(f^{(n)}-P_nF_x)(\psi)$ is a Cauchy sequence. By Lemma \ref{lem:technical1} and \ref{lem:technical2} it holds that for any $n\le k\le l$:

\begin{align}
\sum_{m=k}^{l-1}\norm{(II)_m}_{L_p}+\sum_{m=k}^{l-1}\norm{(III)_{m}}_{L_p}&\lesssim \sum_{m=k}^{l-1}(2^{-m(\alpha+\tilde r)}\lambda^{\gamma-\frac E2-\alpha-\tilde r}+2^{-m\gamma}\lambda^{-\frac E2})\norm{\psi}_{C_c^r}\nonumber\\
&\lesssim (2^{-k(\alpha+\tilde r)}\lambda^{\gamma-\frac E2-\alpha-\tilde r}+2^{-k\gamma}\lambda^{-\frac E2})\norm{\psi}_{C_c^r}\,,\label{eq:calc2}
\end{align}

\noindent where we used $\alpha+\tilde r>0$. For an arbitrary $\tilde\psi\in C_c^r$, we choose $\lambda = 1$ and $x\in\R^d$ such that $\tilde\psi = \psi_x^\lambda$ for some $\psi$ with support in $[\tilde C,\infty)^d$. \eqref{eq:calc2} than implies that $f^{(n)}(\tilde\psi)-F_x(P_n\tilde\psi)$ is a Cauchy-sequence, and thus converges in $L_p(\Omega)$. Since $F_x(P_n\tilde\psi)\rightarrow F_x(\tilde\psi)$ in $L_p(\Omega)$, it follows that $f^{(n)}(\tilde\psi)$ converges to a limit $f(\tilde\psi)$. To show that $f$ is a random distribution, note that by construction, $f$ is linear in $\tilde \psi$. Furthermore, the above inequalities show that for $\lambda = 1\approx 2^{-n}$:

\begin{equation*}
\norm{f(\psi)-F_x(\psi)}_{L_p} = \norm{(I)}_{L_p} +\sum_{m=n}^\infty\norm{(II)_m}_{L_p}+\sum_{m=n}^\infty\norm{(III)_m}_{L_p}\lesssim\norm{\psi}_{C_c^r}. 
\end{equation*}

\noindent Since $F_x$ is already a random distribution, it follows that $\norm{f(\psi)}_{L_p}\lesssim\norm{\psi}_{C_c^r}$, which shows that $f$ is a random distribution.

To show that $f$ has stochastic dimension $e$, simply observe that $f^{(n)}(\psi) = \sum_{x} F_x(\phi_x^n)\scalar{\phi_x^n,\psi}$ is $\mathcal F^{(i)}_{y+2^{-ns_i}R}$ measurable for any $\psi$ with support in $\R^{i-1}\times(-\infty,y]\times\R^{d-i}$. The completeness and right continuity of $\mathcal F^{(i)}$ now implies, that the limit $f(\psi)$ is $\mathcal F^{(i)}_y$-measurable for $i=1,\dots,e$.

It remains to show \eqref{unique1} and \eqref{unique2}. We use \eqref{eq:calc2} and $2^{-n}\approx\lambda$ to get for $l\to\infty$

\begin{align*}
\norm{(f-F_x)(\psi_x^\lambda)}_{L_p} &\lesssim \left(\lambda^{\gamma-\frac E2} +\sum_{m=n}^\infty 2^{-m(\alpha+\tilde r)}\lambda^{\gamma-\frac E2-\alpha\tilde r}+\sum_{m=n}^\infty 2^{-m\gamma}\lambda^{-\frac E2}\right)\norm{\psi}_{C_c^r}\\
&\lesssim \lambda^{\gamma-\frac E2}\norm{\psi}_{C_c^r}\,.
\end{align*}

\noindent
Analogously, it follows that

\begin{equation*}
\norm{E^{\mathcal F_{\pi_i(x)}^{(i)}}(f-F_x)(\psi_x^\lambda)}_{L_p}\lesssim\lambda^{\gamma}\norm{\psi}_{C_c^r}\,.
\end{equation*}

\noindent This finishes the proof.
\end{proof}


\noindent While the above proof needs $F_x\in C^\beta(L_p)$ for sufficiently large $\beta$ to get $f^{(n)}\to f$ for $n\to\infty$, a closer look at the proof reveals that the property is not needed to get $f^{(n)}-P_nF_x$ to converge to some random distribution $G_x$, which should formally be $f-F_x$. So we can get rid of the assumption $F_x\in C^\beta(L_p)$ by setting

\begin{equation*}
f(\psi) := \lim_{n\to\infty} (f^{(n)}-P_n F_x)(\psi) +F_x(\psi)\,.
\end{equation*}

\noindent As it turns out, this gives us the reconstruction if we do not have $F_x\in C^\beta(L_p)$, as the next lemma shows.

\begin{lemma}\label{lem:CbetaUnnessecary}
Under the assumptions of Theorem \ref{stochastic_reconstruction}, there is a random distribution $f$ on $C_c^r$ with stochastic dimension $e$, such that \eqref{unique1} and \eqref{unique2} hold.
\end{lemma}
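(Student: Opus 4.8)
The plan is to bypass the convergence of the reconstructing sequence $f^{(n)}$ itself — which genuinely requires $F_x\in C^\beta(L_p)$ — and instead build $f$ from the differences $f^{(n)}-P_nF_x$, which do converge under the weaker hypotheses. For an arbitrary test function $\psi\in C_c^r$ I would first shift it, choosing a base point $x_0\in\R^d$ lying to the left of $\supp(\psi)$ in every stochastic coordinate, so that $\psi=\varphi_{x_0}^1$ for some $\varphi$ with positive support in the directions $1,\dots,e$ and, moreover, $\pi_i(x_0)\le y$ whenever $\psi$ is supported in $(-\infty,y]$ in coordinate $i$. The Cauchy estimate \eqref{eq:calc2} from the proof of Lemma \ref{lem:prop1+2}, which nowhere used the regularity of $F_x$, then shows that $(f^{(n)}-P_nF_{x_0})(\psi)$ is Cauchy in $L_p(\Omega)$; denoting its limit by $G_{x_0}(\psi)$, I set
$$f(\psi):=G_{x_0}(\psi)+F_{x_0}(\psi)=\lim_{n\to\infty}\bigl(f^{(n)}-P_nF_{x_0}\bigr)(\psi)+F_{x_0}(\psi).$$

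The crucial new point, and the \emph{main obstacle}, is that this definition must not depend on the auxiliary base point $x_0$: since $f^{(n)}(\psi)$ alone need not converge, the two terms cannot be separated and the dependence on $x_0$ is not visibly innocuous. Given two admissible base points $x_0,x_0'$, the reconstructing sequence cancels in the difference of the two limits, which exist by the Cauchy argument applied to each, leaving
$$G_{x_0}(\psi)-G_{x_0'}(\psi)=\lim_{n\to\infty}\bigl(P_nF_{x_0'}-P_nF_{x_0}\bigr)(\psi)=\lim_{n\to\infty}(F_{x_0'}-F_{x_0})(P_n\psi).$$
Here I would invoke the first part of Lemma \ref{lem:PnPsiConverges}, namely $P_n\psi\to\psi$ in $C_c^r$ (a statement about test functions requiring no regularity of the germ), together with the continuity of the random distribution $F_{x_0'}-F_{x_0}:C_c^r\to L_p$, to conclude that this limit equals $(F_{x_0'}-F_{x_0})(\psi)=F_{x_0'}(\psi)-F_{x_0}(\psi)$. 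Substituting back shows $f(\psi)$ via $x_0$ and via $x_0'$ differ by $0$, so $f$ is well-defined.

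With well-definedness in hand, the remaining properties follow by re-reading the proof of Lemma \ref{lem:prop1+2}. Linearity is immediate from linearity of each $G_{x_0}$ and $F_{x_0}$. For the bound $\norm{f(\psi)}_{L_p}\lesssim\norm{\psi}_{C_c^r}$ I take $\lambda=1$, write $\psi=\varphi_{x_0}^1$, bound $\norm{G_{x_0}(\psi)}_{L_p}$ by \eqref{eq:calc2} and add $\norm{F_{x_0}(\psi)}_{L_p}\lesssim\norm{\psi}_{C_c^r}$, valid since $F_{x_0}$ is a random distribution. The estimates \eqref{unique1} and \eqref{unique2} are then literally the bounds on $(f-F_x)(\psi_x^\lambda)=G_x(\psi_x^\lambda)$ and on its conditional expectation already derived at the end of the proof of Lemma \ref{lem:prop1+2}, with $x$ now in the role of the base point.

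Finally, for the stochastic dimension I would argue directly on $f(\psi)=G_{x_0}(\psi)+F_{x_0}(\psi)$. Fixing $i\in\{1,\dots,e\}$ and $\psi$ supported in $\R^{i-1}\times(-\infty,y]\times\R^{d-i}$, and choosing the base point with $\pi_i(x_0)\le y$ as above, each approximant $(f^{(n)}-P_nF_{x_0})(\psi)$ is $\mathcal F^{(i)}_{y+(R-C)2^{-ns_i}}$-measurable: indeed $f^{(n)}(\psi)$ and $P_nF_{x_0}(\psi)=F_{x_0}(P_n\psi)$ only see the fattened support $(-\infty,y+(R-C)2^{-ns_i}]$ in coordinate $i$, by the wavelet support $\supp(\phi)\subset[C,R]^d$ and the adaptedness condition \eqref{measurability}. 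Completeness and right-continuity of $\mathcal F^{(i)}$ then force the $L_p$-limit $G_{x_0}(\psi)$ to be $\mathcal F^{(i)}_y$-measurable, while $F_{x_0}(\psi)$ is $\mathcal F^{(i)}_{\max(\pi_i(x_0),y)}=\mathcal F^{(i)}_y$-measurable by \eqref{measurability}; hence $f(\psi)$ is $\mathcal F^{(i)}_y$-measurable, which is precisely stochastic dimension $e$.
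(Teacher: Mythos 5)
Your construction follows the same skeleton as the paper's proof: define $G_{x_0}(\psi):=\lim_{n\to\infty}(f^{(n)}-P_nF_{x_0})(\psi)$ via the Cauchy estimate \eqref{eq:calc2} (which indeed never uses $F_x\in C^\beta(L_p)$), set $f:=G_{x_0}+F_{x_0}$, prove independence of the base point, and read off \eqref{unique1}, \eqref{unique2} and the stochastic dimension from $(f-F_x)(\psi_x^\lambda)=G_x(\psi_x^\lambda)$; your measurability argument at the end is correct and even more detailed than the paper's. However, there is a genuine gap at precisely the step you call the main obstacle. To pass from $G_{x_0}(\psi)-G_{x_0'}(\psi)=\lim_{n\to\infty}(F_{x_0'}-F_{x_0})(P_n\psi)$ to $(F_{x_0'}-F_{x_0})(\psi)$ you invoke ``the first part of Lemma \ref{lem:PnPsiConverges}, namely $P_n\psi\to\psi$ in $C_c^r$'' for an arbitrary $\psi\in C_c^r$. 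The lemma does not say this: it gives $P_n\psi\to\psi$ in $C_c^r$ only for $\psi\in C_c^q$ with $\tilde q>r+\abs{S}$, and its proof genuinely uses this regularity gap (the tail bound $\norm{\hat P_m\psi}_{C_c^r}\lesssim 2^{-m(\tilde q-r-\abs{S})}\norm{\psi}_{C_c^q}$ is only summable because of it). So the statement does require extra regularity --- of the test function. For $\psi$ that is merely $C_c^r$, the paper provides no convergence of $P_n\psi$ in the $C_c^r$-topology; the only convergence available for free is in $L_2(\R^d)$, a topology in which $F_{x_0'}-F_{x_0}$ is not continuous. Nor can you fall back on the second part of Lemma \ref{lem:PnPsiConverges}: that would require $F_{x_0'}-F_{x_0}\in C^\alpha(L_p)$ with $\alpha+\tilde r>0$, which is exactly the regularity hypothesis this lemma is meant to remove (Remark \ref{rem:allDistAreCalpha} only gives $\alpha=-\abs{S}-r$).

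The repair is a density step, and it is how the paper closes the argument. Your telescoping identity is valid, and for $\psi\in C_c^q$ with $\tilde q>r+\abs{S}$ Lemma \ref{lem:PnPsiConverges} does apply, giving $f_{x_0}(\psi)=f_{x_0'}(\psi)$ on $C_c^q$. Both $f_{x_0}$ and $f_{x_0'}$ are continuous maps $C_c^r\to L_p(\Omega)$ by the uniform bound $\norm{f_{x_0}(\psi)}_{L_p}\lesssim\norm{\psi}_{C_c^r}$ that you already establish; since $C_c^q$ is dense in $C_c^r$, they then agree on all of $C_c^r$. (The paper obtains the agreement on $C_c^q$ differently: using Remark \ref{rem:allDistAreCalpha} it applies Lemma \ref{lem:prop1+2} to the germ viewed on $C_c^q$, so that both $f_{x_0}(\psi)$ and $f_{x_0'}(\psi)$ equal $\lim_{n\to\infty}f^{(n)}(\psi)$ there; your telescoping computation is a slightly cleaner route to the same identity. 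Either way, the extension from $C_c^q$ to $C_c^r$ needs the density-plus-continuity argument, which is absent from your proposal.) With that one addition your proof is complete and essentially coincides with the paper's.
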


\begin{proof}
By Remark \ref{rem:allDistAreCalpha}, we get that for all $x\in\R^d$, $F_x\in C^{-\abs S-r}(L_p)$. So if we restrict ourselves to the case $\psi\in C^q_c$, $q+\abs S-r>0$, we get that $F_x$ fulfills the conditions of Lemma \ref{lem:prop1+2} and can thus be reconstructed to get a random distribution $f(\psi)$ for $\psi\in C_c^q$. As stated above, the proof of Lemma \ref{lem:prop1+2} shows that for any $\psi\in C_c^r$, $(f^{(l)}-P_lF_x)(\psi)$ converges against some random $C_c^r$ distribution $\psi\mapsto G_x(\psi)$ as $l\to\infty$. 

Therefore, we set $f_x:= G_x+F_x$ and show that $f_x =: f$ is independent of $x\in\R^d$ and thus a well-defined random $C_c^r$ distribution. This is a slight abuse of notation, as we defined $f(\psi)$ twice for $\psi\in C_c^q$. However, by the proof of Lemma \ref{lem:prop1+2}, it is clear that for all $\psi\in C_c^q$, $f(\psi) = \lim_{l\to\infty} (f^{(l)}-P_l F_x)(\psi) + F_x(\psi) = (G_x+F_x)(\psi) = f_x(\psi)$, justifying the notation.

Note that this would imply that $(f-F_x)(\psi) = G_x(\psi) = \lim_{l\to\infty} (f^{(l)}-P_lF_x)(\psi)$ in $L_p$, which means that $f$ has stochastic dimension $e$ and fulfills \eqref{unique1} and \eqref{unique2}.

So it only remains to show that $f_x(\psi)$ is independent of $x$. Let $x,y\in\R^d$. Then $f_x(\psi) = f_y(\psi)$ a.s. for all $\psi\in C_c^q$, since $f_x(\psi) = f_y(\psi) = \lim_{n\to\infty} f^{(n)}(\psi)$ in $L_p$ holds on $C_c^q$. Since $f_x, f_y :C_c^r\to L_p(\Omega)$ are continuous and $C_c^q\subset C_c^r$ is dense, it follows that $f_x = f_y$ a.s.
\end{proof}

\noindent
It remains to prove the uniqueness of the reconstructed germ. We do so in the next lemma, which finishes the proof of Theorem \ref{stochastic_reconstruction}:

\begin{lemma}\label{lem:prop3}
There is at most one random distribution $f$ (up to modification) with stochastic dimension $e$, which fulfills \eqref{unique1} and \eqref{unique2}.
\end{lemma}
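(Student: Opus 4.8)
The plan is to prove uniqueness by a standard difference argument: suppose $f$ and $g$ are two random distributions, each with stochastic dimension $e$ and each satisfying \eqref{unique1} and \eqref{unique2}, and set $h := f - g$. Since both satisfy \eqref{unique1}, the triangle inequality gives the ``diagonal'' bound $\norm{h(\psi_x^\lambda)}_{L_p} \lesssim \lambda^{\gamma - \frac E2}$, and since both satisfy \eqref{unique2}, the conditional bound $\norm{E^{\mathcal F^{(i)}_{\pi_i(x)}} h(\psi_x^\lambda)}_{L_p} \lesssim \lambda^\gamma$ for $i = 1, \dots, e$. Crucially, $h$ inherits stochastic dimension $e$, since it is a difference of two random distributions each having this property. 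The goal is to show $h(\psi) = 0$ almost surely for every test function $\psi \in C_c^r$, which means $f$ and $g$ agree up to modification.

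First I would reduce the statement to testing $h$ against the wavelet-projected pieces, exactly as in the existence proof. Applying the wavelet decomposition $\psi = P_n\psi + \sum_{m \ge n} \hat P_m \psi$, it suffices to estimate $\norm{h(P_n \psi_x^\lambda)}_{L_p}$ and $\norm{h(\hat P_m \psi_x^\lambda)}_{L_p}$. For the high-frequency terms $h(\hat P_m \psi_x^\lambda)$, I would expand $h(\hat\phi_y^m) = \sum_z (\text{coeff}) h(\phi_z^{m+1})$ and use the diagonal bound $\norm{h(\phi_z^m)}_{L_p} \lesssim 2^{-m\gamma - m\frac{\abs S - E}{2}}$ (which follows from \eqref{unique1} applied at scale $\lambda \approx 2^{-m}$), together with the wavelet estimates \eqref{LemIneq1} and \eqref{LemIneq2}, to obtain a bound that sums in $m$ because $\gamma > 0$. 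For the low-frequency term $h(P_n \psi_x^\lambda)$, the naive diagonal bound only yields $2^{-n(\gamma - \frac E2)}$, which does not vanish fast enough on its own; here is where the stochastic improvement is essential.

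The main obstacle — and the heart of the argument — is controlling $\norm{h(P_n \psi_x^\lambda)}_{L_p}$, and my plan is to apply Lemma \ref{lem:stochTechnique} directly to $h$. The two hypotheses of that lemma are precisely the inequalities $\norm{h(\phi_x^n)}_{L_p} \lesssim 2^{-n\gamma - n\frac{\abs S - E}{2}}$ and $\norm{E^{\mathcal F^{(i)}_{\pi_i(x)}} h(\phi_x^n)}_{L_p} \lesssim 2^{-n\gamma - n\frac{\abs S}{2}}$, which I derive by setting $\psi = \phi$ and $\lambda \approx 2^{-n}$ in the uniqueness bounds \eqref{unique1} and \eqref{unique2}. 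Since $h$ has stochastic dimension $e$, the measurability requirement ($h(\phi_x^n)$ is $\mathcal F^{(i)}_{\hat S_i(x)}$-measurable) holds automatically by the remark following Lemma \ref{lem:stochTechnique}. The lemma then yields $\norm{h(P_n \psi_x^\lambda)}_{L_p} \lesssim 2^{-n\gamma} \lambda^{-\frac E2}$, which decays to $0$ as $n \to \infty$ because $\gamma > 0$.

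Combining these, I would write $\norm{h(\psi_x^\lambda)}_{L_p} \le \norm{h(P_n \psi_x^\lambda)}_{L_p} + \sum_{m \ge n} \norm{h(\hat P_m \psi_x^\lambda)}_{L_p}$ and let $n \to \infty$ with $\lambda$ fixed; both contributions carry a factor $2^{-n\gamma}$ and hence vanish, forcing $\norm{h(\psi_x^\lambda)}_{L_p} = 0$. Thus $h(\psi) = 0$ almost surely for every localized test function of the prescribed form, and by the linearity and density arguments used in Lemma \ref{lem:CbetaUnnessecary} (localizations span a dense subset and $h$ is continuous on $C_c^r$) this extends to all $\psi \in C_c^r$. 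Therefore $f = g$ up to modification, completing the proof. The only subtlety to handle carefully is the support restriction on $\psi$: the bounds \eqref{unique1}–\eqref{unique2} are stated for $\psi$ with positive support in the stochastic directions, so I would first verify the conclusion for such $\psi$ and then recover arbitrary test functions via recentering, just as in the remark following Theorem \ref{stochastic_reconstruction}.
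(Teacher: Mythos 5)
Your proposal is correct and takes essentially the same route as the paper: both pass to the difference $h=f-g$, note that $h$ inherits stochastic dimension $e$ and the bounds \eqref{unique1}--\eqref{unique2} by the triangle inequality, and then apply Lemma \ref{lem:stochTechnique} to $h$ (with hypotheses obtained by substituting the wavelet $\phi_x^n$ for $\psi_x^\lambda$), concluding $\norm{h(P_n\psi)}_{L_p}\lesssim 2^{-n\gamma}\to 0$ and finishing by density. The only difference is that the paper invokes Lemma \ref{lem:PnPsiConverges} to write $h(\psi)=\lim_{n\to\infty}h(P_n\psi)$ directly, so your separate estimate of the high-frequency terms $h(\hat P_m\psi_x^\lambda)$ is redundant (and, minor point, its summability in $m$ rests on the assumption $\gamma-\frac E2+\tilde r>0$ rather than on $\gamma>0$ alone).
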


\begin{proof}
Let $f,g$ be two random distributions as in the above lemma, and consider $h=f-g$. By Lemma \ref{lem:PnPsiConverges}, we have $h(\psi) = \lim_{n\to\infty} h(P_n\psi)$ in $L_p$ for $\psi\in C^{q}_c$ for a sufficiently large $q$. Since $C^{q}_c$ is dense in $C_c^r$, it suffices to show that $h(P_n\psi)$ converges to $0$ in $L_p$ for $\psi\in C_c^{q}$. For a test function $\psi$ with support in $[\tilde C,\tilde R]^d$ for some $0<\tilde C<\tilde R$, we can apply \eqref{unique1} to get

\begin{align*}
\norm{h(\psi_x^\lambda)}_{L_p}&\le\norm{(f-F_x)(\psi_x^\lambda)}_{L_p}+\norm{(g-F_x)(\psi_x^\lambda)}_{L_p}\\&\lesssim \lambda^{\gamma-\frac E2}\,,
\end{align*}

\noindent and anlaogously, using \eqref{unique2} and for $i=1\dots,e$:

\begin{equation*}
\norm{E^{\mathcal F^{(i)}_{\pi_i(x)}}h(\psi_x^\lambda)}_{L_p}\lesssim\lambda^\gamma\,.
\end{equation*}

If we substitute $\psi_x^\lambda$ with $\phi_x^n$ and note that $h$ has stochastic dimension $e$, we see that $h$ fulfills the conditions of Lemma \ref{lem:stochTechnique}. It follows that $\norm{h(P_n\psi)}_{L_p}\lesssim 2^{-n\gamma}$ goes to $0$, which finishes the proof.
\end{proof}

\subsection{Proof of Theorem \ref{stochastic_recon_covariance}}

\noindent
The strategy for the prove of Theorem \ref{stochastic_recon_covariance} is the same as for Theorem \ref{stochastic_reconstruction}: For $l\ge n$, we use the decomposition \eqref{eq:decomp1} to show that $(f^{(n)}-P_nF_x)(\psi)$ is a Cauchy sequence and thus convergent in $L_2(\Omega)$. By choosing a smart $n$, this decomposition will also be used to show the bounds \eqref{uniqueCov1} and \eqref{uniqueCov2}.

The main difference in the proofs is the uniqueness part: For Theorem \ref{stochastic_reconstruction}, we took two distributions $f,g$ fulfilling \eqref{unique1} and \eqref{unique2} and analyzed $h=f-g$. However, if one follows this approach in the covariance setting, the term $h(\psi_x^\lambda)h(\psi_y^\lambda)$ contains mixed expressions $E\left((f-F_x)(\psi_x)(g-F_y)(\psi_y)\right)$, which are hard to bound from above. So we use a different tactic to show uniqueness for the covariance setting:

\begin{lemma}\label{lem:uniqueCov}
Under the assumptions of Theorem \ref{stochastic_recon_covariance}, there is at most one (up to modification) random distribution $f$, such that \eqref{uniqueCov1} and \eqref{uniqueCov2} hold.
\end{lemma}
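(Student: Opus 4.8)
The plan is to sidestep the obstruction described above—the uncontrollable mixed terms $E\big((f-F_x)(\cdot)(g-F_y)(\cdot)\big)$—by comparing an arbitrary reconstruction $f$ not against a rival reconstruction, but against the canonical reconstructing sequence $f^{(n)}$ from \eqref{eq:reconstructingSequence}. The construction in the existence part of Theorem \ref{stochastic_recon_covariance} produces a reconstruction $\tilde f$ as the $L_2$-limit $f^{(n)}(\psi)\to\tilde f(\psi)$ for $\psi$ in a sufficiently regular space $C_c^q$ (using the $G_x$-trick of Lemma \ref{lem:CbetaUnnessecary} to handle the regularity of $F_x$). It therefore suffices to show that any $f$ satisfying \eqref{uniqueCov1} and \eqref{uniqueCov2} coincides with this $\tilde f$, which forces all reconstructions to agree up to modification. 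The decisive observation is that
\[
(P_n f - f^{(n)})(\psi) = \sum_{x\in\Delta_n}(f-F_x)(\phi_x^n)\,\scalar{\phi_x^n,\psi},
\]
so only the \emph{matched-index} quantities $u_x := (f-F_x)(\phi_x^n)$ appear. These are precisely the expressions controlled by \eqref{uniqueCov1} and by the covariance bound of Corollary \ref{cor:uniqueCovMult}, both applied to the \emph{same} reconstruction $f$; no cross terms are ever created.

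Next I would record the two relevant estimates. Using the wavelet normalization $\phi_x^n = 2^{-n\abs{S}/2}\phi_x^{2^{-n}}$, applying \eqref{uniqueCov1} and Corollary \ref{cor:uniqueCovMult} with $\lambda=2^{-n}$ gives
\[
\norm{u_x}_{L_2}\lesssim 2^{-n\gamma-n\frac{\abs{S}-E}{2}},\qquad \abs{E(u_x u_y)}\lesssim 2^{-2n\gamma-n\abs{S}},
\]
the latter valid whenever the stochastic effective supports of $u_x$ and $u_y$ are disjoint (note that $\phi$ has support in $[C,R]^d$ with $C>0$, so the positivity requirement of the covariance setting is met). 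Since $p=2$, I expand the variance
\[
\norm{(P_n f - f^{(n)})(\psi)}_{L_2}^2 = \sum_{x,y\in\Delta_n} E(u_x u_y)\,\scalar{\phi_x^n,\psi}\scalar{\phi_y^n,\psi}
\]
and split the double sum into the ``disjoint'' region, where I insert the covariance bound, and the ``overlapping'' region, where I use Cauchy--Schwarz together with $\norm{u_x}_{L_2}$. Using $\abs{\scalar{\phi_x^n,\psi}}\lesssim 2^{-n\abs{S}/2}$ from \eqref{LemIneq1}, the disjoint region contains $\lesssim 2^{2n\abs{S}}$ pairs and the overlapping region $\lesssim 2^{n(2\abs{S}-E)}$ pairs (overlap pins down $y$ to $O(1)$ grid points in each of the $e$ stochastic directions while leaving the remaining $d-e$ directions free). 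A direct bookkeeping of the exponents shows that \emph{both} regions contribute $\lesssim 2^{-2n\gamma}$, whence $\norm{(P_n f - f^{(n)})(\psi)}_{L_2}\lesssim 2^{-n\gamma}\to 0$.

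To conclude, I would combine $P_n f(\psi)\to f(\psi)$ (Lemma \ref{lem:PnPsiConverges}, valid for $\psi\in C_c^q$ with $q$ large enough) with $f^{(n)}(\psi)\to\tilde f(\psi)$ to obtain $f(\psi)=\tilde f(\psi)$ almost surely on $C_c^q$; density of $C_c^q$ in $C_c^r$ and the continuity of $f,\tilde f$ then yield $f=\tilde f$ up to modification.

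The main obstacle is conceptual rather than computational: one must realize that testing $P_n f - f^{(n)}$ keeps every index matched, so that the honest covariance bound \eqref{uniqueCov2} applies and the destructive cross terms never arise. The technical crux is then the verification that both regions of the variance expansion decay like $2^{-2n\gamma}$; this works precisely because the improvement $\lambda^{2\gamma}$ of \eqref{uniqueCov2} over the Cauchy--Schwarz bound $\lambda^{2\gamma-E}$ supplies exactly the factor $2^{-nE}$ needed to absorb the $2^{2n\abs{S}}$ count of disjoint pairs.
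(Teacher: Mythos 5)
Your proposal is correct and follows essentially the same route as the paper: both compare $f(P_n\psi)$ with $f^{(n)}(\psi)$ via the identity $f(P_n\psi)-f^{(n)}(\psi)=\sum_{x\in\Delta_n}(f-F_x)(\phi_x^n)\scalar{\phi_x^n,\psi}$, bound the resulting variance expansion using \eqref{uniqueCov1} on near pairs and \eqref{uniqueCov2} on far pairs (both decaying like $2^{-2n\gamma}$), and conclude by Lemma \ref{lem:PnPsiConverges} together with density of $C_c^q$ in $C_c^r$. The only differences are cosmetic: the paper handles the overlapping pairs by splitting $\Delta_n^{1,e}$ into finitely many coarser offset submeshes rather than by your Cauchy--Schwarz count of the overlap region, and you should invoke the assumed bound \eqref{uniqueCov2} directly (the wavelet terms use the same test function $\phi$ and the same scale $2^{-n}$, so it applies) rather than Corollary \ref{cor:uniqueCovMult}, which the paper states only for the constructed reconstruction and not for an arbitrary $f$ satisfying the two inequalities.
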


\begin{proof}
We will show that any $f$ fulfilling \eqref{uniqueCov1} and \eqref{uniqueCov2} needs to fulfill

\begin{equation*}
f(\psi) = \lim_{n\to\infty} f^{(n)}(\psi)\,
\end{equation*}

\noindent for $\psi\in C_c^q$ for high enough $q$, and $f^{(n)}$ as in \eqref{eq:reconstructingSequence}, where the right-hand-side limit is in $L_2(\Omega)$. Thus, $f(\psi)$ is uniquely defined for $\psi\in C_c^q$ and since $C_c^q\subset C_c^r$ is dense, this shows the claim.

So assume $f$ fulfills \eqref{uniqueCov1} and \eqref{uniqueCov2}. Note that $f(\psi) = \lim_{n\to\infty}f(P_n\psi)$ in $L_2$ holds for $\psi\in C_c^q$ by Lemma \ref{lem:PnPsiConverges} for high enough $q$. Using \eqref{uniqueCov1}, \eqref{uniqueCov2} and counting the non-zero terms, we can formally calculate

\begin{align}
\norm{f(P_n\psi)-f^{(n)}(\psi)}_{L_2}^2 &= \norm{\sum_{x\in\Delta_n}(f-F_x)(\phi_x^n)\scalar{\phi_x^n,\psi}}_{L_2}^2 \nonumber\\
&\le \sum_{x\in\Delta_n^{1,e}}\left(\sum_{y\in\Delta_n^{e+1,d}}\norm{(f-F_{(x,y)})(\phi_{(x,y)}^n)\scalar{\phi_{(x,y)}^n,\psi}}_{L_2}\right)^2\nonumber\\
&\qquad +\sum_{x_1\neq x_2\in\Delta_n^{1,e}}\sum_{y_1,y_2\in\Delta_n^{e+1,d}} E\left(\prod_{i=1}^2 (f-F_{(x_i,y_i)})(\phi_{(x_i,y_i)}^n)\scalar{\phi_{(x_i,y_i)}^n,\psi}\right) \nonumber\\
&\lesssim 2^{-2n\gamma}\xrightarrow{n\to\infty} 0\,,\label{ineq:formalCalcCov}
\end{align}

\noindent where the calculation is formal at the moment since we applied \eqref{uniqueCov2} without checking that $(f-F_{(x_i,y_i)})(\phi^n_{(x_i,y_i)})$ have disjoint effective stochastic supports for $i=1,2$. Looking at said effective stochastic support, we see that it is given by $[\pi_1(x_i),\pi_1(x_i)+2\cdot 2^{-ns_1}R]\times\dots\times[\pi_e(x_i),\pi_e(x_i)+2\cdot 2^{-ns_e}R]$, so the mesh size of $\Delta_n^{1,e}$ of $2^{-ns_i}$ for $i=1\dots,e$ is not big enough to ensure that the effective stochastic supports are disjoint for all $x_1\neq x_2\in\Delta_n^{1,e}$. 

However, comparing the mesh size $2^{-n s_i}$ with the size of the effective stochastic support $2\cdot 2^{-ns_i} R$ in the direction $i=1,\dots,e$, we see that for each $x_1\in\Delta_n^{1,e}$, only a constant number of $x_2\in \Delta_n^{1,e}$ will cause an overlapping effective stochastic support. So it suffices to introduce finitely many offsets $r^{(0)}_n$ and split the mesh $\Delta_n^{1,e}$ into finitely many meshes $\Delta_n^{1,e}(r_n^{(0)})$. \eqref{ineq:formalCalcCov} holds true if we replace $\Delta_n^{1,e}$ with $\Delta_n^{1,e}(r_n^{(0)})$ and summing over all $r_n^{(0)}$ only adds a constant factor to the $\lesssim$. It follows that $\norm{f(P_n\psi)-f^{(n)}(\psi)}$ vanishes for $\psi\in C_c^q$, which shows the claim.
\end{proof}

\noindent
With this, we can tackle the proof of Theorem \ref{stochastic_recon_covariance}. As the proof is very similar to the proof of Theorem \ref{stochastic_reconstruction}, we will skip over some of the calculations and focus on highlighting the broader arguments.

\begin{proof}[Proof of Theorem \ref{stochastic_recon_covariance}]
We only show the proof under the additional assumption that for any $x\in\R^d$, $F_x\in C^{\beta}(L_2)$ for a $\beta\in\R$ such that $\beta+\tilde r>0$. The general case can be recovered from this case just as before. 

For $n\le l$, recall our decomposition \eqref{eq:decomp1} for a test function $\psi$ with support in $[\tilde C, \tilde R]^d$ for some $0<\tilde C<\tilde R$, $0<\lambda\le 1$ and $n$ such that $B2^{-n}\le \lambda\le B2^{-n+1}$ for $B=\min_{i=1,\dots,d}\left(\frac{\tilde C}{2R}\right)^{-\frac 1{s_i}}$:

\begin{align*}
f^{(l)}(\psi_x^\lambda)-F_x(P_l\psi_x^\lambda) &= \underbrace{(f^{(n)}-P_nF_x)(\psi_x^\lambda)}_{(I)_n}+\sum_{m=n}^{l-1}\underbrace{\hat P_m(f^{(m+1)}-f^{(m)}-F_x)(\psi_x^\lambda)}_{(II)_m}\\&\qquad+\sum_{m=n}^{l-1}\underbrace{P_m(f^{(m+1)}-f^{(m)})(\psi_x^\lambda)}_{(III)_m}\,.
\end{align*}

\noindent
The bound $\norm{(II)_m}_{L_2}\lesssim 2^{-m(\alpha+\tilde r)}\lambda^{\gamma-\frac E2-\alpha-\tilde r}\norm{\psi}_{C_c^r}$ holds by the same argument as in Theorem \ref{stochastic_reconstruction}, so we only need to bound $\norm{(III)_m}_{L_p}$ to show convergence. Note that \eqref{ineq1} still holds, and one can use \eqref{coherenceCov2} to replace \eqref{ineq2} with

\begin{equation}
\abs{E\left(\prod_{i=1}^2(f^{(m+1)}-f^{(m)})(\phi_{x_i}^m)\right)} \lesssim 2^{-2m\gamma-m\abs S}\,,\label{ineqCov2}
\end{equation}

\noindent for $\abs{x_1^{(e)}-x_2^{(e)}} \ge 2eR2^{-m}$ (where $x_i^{(e)} = (\pi_1(x_i),\dots,\pi_e(x_i))$), since the stochastic effective support of all terms in the sum $(f^{(m+1)}-f^{(m)})(\phi_{x_i}^m) = \sum_{k\in\Delta^{n+1}} a_k^{(n+1)}(F_{x+k}-F_x)(\phi_{k+x}^{(n+1)})$ (see \eqref{eq:(III)Sum}) are contained in $[\pi_1(x_i),\pi_1(x_i)+2R2^{-ms_1}]\times\dots\times[\pi_e(x_i),\pi_e(x_i)+2R2^{-ms_e}]$. Recall that $(III)_m$ is of the form \[(III)_m = \sum_{x\in\Delta_m^{1,e},y\in\Delta_m^{e+1,d}}g_{(x,y)}\,\] where $g_x := (f^{m+1}-f^{(m)})(\phi_x^m)\scalar{\phi_x^m,\psi_{x_0}^\lambda}$. Just as in the proof of Lemma \ref{lem:uniqueCov}, we do not have a satisfying upper bound for $\abs{E(g_{(x_1,y_1)}g_{(x_2,y_2)})}$ with $x_1\neq x_2\in\Delta_m^{1,e}, y_1,y_2\in\Delta_m^{e+1,d}$, unless $\abs{x_1^{(e)}-x_2^{(e)}}$ is big enough to ensure disjoint effective stochastic supports of the involved terms. This can be achieved by splitting the net $\Delta_n$ into different nets $\Delta_n(r_n^{(0)})$, as before. This will only add a constant factor to our calculations, so we suppress this notation and write with a slight abuse of notation:

\begin{align*}
\norm{\sum_{x\in\Delta_m^{1,e}, y\in\Delta_m^{e+1,d}} g_{(x,y)}}_{L_2}^2&\le \sum_{x\in\Delta_m^{1,e}}\left(\sum_{y\in\Delta_m^{e+1,d}}\norm{g_{(x,y)}}_{L_2}\right)^2+ \sum_{x_1\neq x_2 \in \Delta_m^{1,e}}\sum_{y_1,y_2\in\Delta_m^{e+1,d}}E(g_{(x_1,y_1)} g_{(x_2,y_2)}) \\
&\lesssim 2^{-2m\gamma}(\lambda^{-E}+1)\norm{\psi}_{C_c^r}^2\,,
\end{align*}

\noindent where one uses the same bounds \eqref{ineq1}, \eqref{ineqCov2} and \eqref{LemIneq1} while counting the order of non-zero terms. This shows that

\begin{equation*}
\norm{(III)_m}_{L_2}\lesssim 2^{-m\gamma}\lambda^{-\frac E2}\norm{\psi}_{C_c^r}\,,
\end{equation*}

\noindent
which is sufficient to show that $f^{(n)}(\psi)$ is cauchy and thus convergent. As before, the limit $f$ is a random distribution.

It remains to show \eqref{uniqueCov1} and \eqref{uniqueCov2}. Note that \eqref{uniqueCov1} follows exactly as \eqref{unique1} in Theorem \ref{stochastic_reconstruction}, so we only need to focus on \eqref{uniqueCov2}. To show this, we use again the decomposition \eqref{eq:decomp1} and write $(I)_n(\psi_{x}^\lambda) = (f^{(n)}-P_n F_{x})(\psi_{x}^\lambda)$. Before we calculate $\abs{E((I)_n(\psi_{x_1}^\lambda) (I)_n(\psi_{x_2}^\lambda))}$, we need to check that there is no problem with the effective stochastic supports of any involved term: It holds that

\begin{equation}\label{eq:(I)}
(I)_n(\psi_{x}^\lambda) = \sum_{y\in\Delta_n}(F_y-F_x)(\phi_y^n)\scalar{\phi_y^n,\psi_{x}^\lambda}\,,
\end{equation}

\noindent which has for all non-zero terms and for $j=1,\dots, e$: $\pi_j(y) \ge \pi_j(x)$ and $\pi_j(y)\le \pi_j(x)+\tilde R\lambda^{s_j}$ by our choice of $n$. Thus the effective stochastic support of $(F_x-F_y)(\phi_y^n)$ is a subset of $\prod_{j=1}^e[\pi_j(x),\pi_j(x)+\tilde R\lambda^{s_j}+2^{-ns_j}R]$. Using again our choice of $n$, we get that $2^{-ns_j}R\lesssim \lambda^{s_i} \tilde C/2 \le \lambda^{s_i}\tilde R$ and thus $\pi_j(x)+\tilde R\lambda^{s_j}+2^{-n s_j}R \le \pi_j(x)+2\tilde R\lambda^{s_i}$. Thus, the stochastic effective support of all terms in the sum $I_n(\psi_{x}^\lambda)$ are contained in the one of $(f-F_{x})(\psi_{x}^\lambda)$. So as long as $(f-F_{x_i})(\psi_{x_i}^\lambda)$ has disjoined stochastic effective supports for $i=1,2$, we can apply the coherence property \eqref{coherenceCov2} to all terms appearing in $(I)_n(\psi_{x_1}^\lambda)(I_n)(\psi_{x_2}^\lambda)$. Similarly, it holds that the stochastic effective supports of terms in $(II)_m(\psi_x^\lambda)$ and $(III)_m(\psi_x^\lambda)$ (where we define $(II)_m(\psi_x^{\lambda}), (III)_m(\psi_x^\lambda)$ analogously to $(I)_n(\psi_x^\lambda)$) are contained in the one of $(f-F_x)(\psi_x^\lambda)$, so there is never a problem applying \eqref{coherenceCov2}.

So let $x_1,x_2$ be chosen in such a way, that $(f-F_{x_i})(\psi_{x_i}^\lambda)$ has disjoined stochastic support. We use \eqref{eq:(I)} to calculate

\begin{align*}
\abs{E((I)_n(\psi_{x_1}^\lambda) (I)_n(\psi_{x_2}^\lambda))} &\le \sum_{y_1,y_2\in\Delta_n}\abs{E\left(\prod_{i=1}^2 (F_{y_i}-F_{x_i})(\phi_{y_i}^n)\scalar{\phi_{y_i}^n,\psi_{x_i}^\lambda}\right)} \\
&\lesssim 2^{-2n\gamma-n\abs{S}}\sum_{y_1,y_2\in\Delta_n}\abs{\prod_{i=1}^2 \scalar{\phi_{y_i}^n,\psi_{x_i}^\lambda}} \\
&\lesssim 2^{-2n\gamma}\approx \lambda^{2\gamma}\,,
\end{align*} 

\noindent since the number of non-zero terms is of order $2^{2n\abs{S}}\lambda^{2\abs S}$. More generally, by using the identities

\begin{align}
(II)_m(\psi_x^\lambda) &= \sum_{y\in\Delta_m}\sum_{z\in\Delta_{m+1}}\sum_{\hat\phi\in\Phi}(F_z-F_x)(\phi_z^{m+1})\scalar{\phi_z^{m+1},\hat\phi_y^m}\scalar{\hat\phi_y^m,\psi_x^\lambda} \label{eq:(II)}\\
(III)_m(\psi_x^\lambda) &= \sum_{y\in\Delta_m}(f^{(m+1)}-f^{(m)})(\phi_y^m)\scalar{\phi_y^m,\psi_x^\lambda} \nonumber\\
&= \sum_{y\in\Delta_m}\sum_{k\in\Delta^{m+1}} a_k^{(m+1)}(F_{y+k}-F_y)(\phi_{y+k}^{(m+1)})\scalar{\phi_y^m,\psi_x^\lambda}\,\label{eq:(III)}
\end{align}

\noindent
we see that $(I)_n(\psi_x^\lambda)$ interacts with the respective other terms by adding a factor

\begin{align}\label{ineq:calc1}
\abs{E((I)_n(\psi_{x_1}^\lambda)(\bigstar_1))} &= \abs{E\left(\left(\sum_{y_1\in\Delta_n}(F_{y_1}-F_{x_1})(\phi_{y_1}^n)\scalar{\phi_{y_1}^n,\psi_{x_1}^\lambda}\right)\cdot(\bigstar_2)\right)}\\
&\lesssim \lambda^\gamma(\bigstar_3)\,,\nonumber
\end{align}

\noindent where $(\bigstar_1)$ is given by $(I)_n(\psi_{x_2}^\lambda)$ $(II)_m(\psi_{x_2}^\lambda)$ or $(III)_m(\psi_{x_2}^\lambda)$, $(\bigstar_2)$ is given by \eqref{eq:(I)}, \eqref{eq:(II)} or \eqref{eq:(III)}, respectively and $(\bigstar_3)$ is given by $\lambda^{a}2^{-mb}$ for certain rates $a,b$ depending on $(\bigstar_1)$. A similar analysis leads to the bounds

\begin{align}\label{ineq:calc2}
\abs{E((II)_m(\psi_{x_1}^\lambda)(\bigstar_1))} &= \abs{E\left(\left(\sum_{y_1\in\Delta_m}\sum_{z_1\in\Delta_{m+1}}\sum_{\hat\phi\in\Phi}(F_{z_1}-F_{x_1})(\phi_{z_1}^{m+1})\scalar{\phi_{z_1}^{m+1},\hat\phi_{y_1}^m}\scalar{\hat\phi_{y_1}^m,\psi_{x_1}^\lambda}\right)\cdot(\bigstar_2)\right)} \\
&\lesssim 2^{-m(\alpha+\tilde r)}\lambda^{\gamma-\alpha-\tilde r}(\bigstar_3)\nonumber
\end{align}

\noindent as well as

\begin{align}\label{ineq:calc3}
\abs{E((III)_m(\psi_{x_1}^\lambda)(\bigstar_1))} &= \abs{E\left(\left(\sum_{y_1\in\Delta_m}\sum_{k_1\in\Delta^{m+1}} a_{k_1}^{(m+1)}(F_{y_1+k_1}-F_{y_1})(\phi_{y_1+k_1}^{(m+1)})\scalar{\phi_{y_1}^m,\psi_{x_1}^\lambda}\right)\cdot(\bigstar_2)\right)}\\
&\lesssim 2^{-m\gamma}(\bigstar_3)\,.\nonumber
\end{align}

\noindent Combining these gives us the bounds

\begin{align*}
\abs{E((II)_{l}(\psi_{x_1}^\lambda)(II)_k(\psi_{x_2}^\lambda))} &\lesssim 2^{-(k+l)(\alpha+\tilde r)}\lambda^{2\gamma-2(\alpha+\tilde r)}\\
\abs{E((III)_k(\psi_{x_1}^\lambda)(III)_l(\psi_{x_2}^\lambda))} &\lesssim 2^{-(k+l)\gamma}\\
\abs{E((I)_n(\psi_{x_1}^\lambda)(II)_m(\psi_{x_2}^\lambda))} &\lesssim 2^{-m(\alpha+\tilde r)}\lambda^{2\gamma-(\alpha+\tilde r)}\\
\abs{E((I)_n(\psi_{x_1}^\lambda)(III)_m(\psi_{x_2}^\lambda))} &\lesssim 2^{-m\gamma}\lambda^{\gamma}\\
\abs{E((II)_k(\psi_{x_1}^\lambda)(III)_l(\psi_{x_2}^\lambda))} &\lesssim 2^{-k(\alpha+\tilde r)-l\gamma}\lambda^{\gamma-(\alpha+\tilde r)}\,.
\end{align*}

\noindent Note that $\gamma>0, \alpha+\tilde r>0$ implies that all of the above terms are summable in $k,l,m$, respectively. Summing those expressions over their respective indices in the decomposition \eqref{eq:decomp1} and using $2^{-n}\cong\lambda$ shows

\begin{align*}
\abs{E\left(\prod_{i=1}^2 (f-F_{x_i})(\psi_{x_i}^\lambda)\right)}&\lesssim\lambda^{2\gamma}\,,
\end{align*}

\noindent which finishes the proof.
\end{proof} 

\begin{remark}
A surprising detail of this proof is that the existence part only makes use of \eqref{coherenceCov2} for $\psi_1=\psi_2$ and $\epsilon_1=\epsilon_2$, while the general case was used to show \eqref{uniqueCov2}. This implies that the reconstruction is possible under the weaker condition that \eqref{coherenceCov2} holds only for $\psi_1=\psi_2$ and $\epsilon_1=\epsilon_2$, but in this case, we do not know how to uniquely characterize the limit. 
\end{remark}

\noindent Last but not least, let us proof Corollary \ref{cor:uniqueCovMult}

\begin{proof}[Proof of Corollary \ref{cor:uniqueCovMult}]
This proof is essentially identical to the proof of \eqref{uniqueCov2} in Theorem \ref{stochastic_recon_covariance}. The only difference is that we use two natural numbers $n_1,n_2$ such that $B 2^{-n_i}\le\lambda_i\le B 2^{-n_i+1}$ for $i=1,2$, and decompose $(f-F_x)((\psi_i)_{x_i}^{\lambda_i})$ with \eqref{eq:decomp1} with the respective $n_i$. One observes that the calculations \eqref{ineq:calc1}, \eqref{ineq:calc2} and \eqref{ineq:calc3} still hold if we add the corresponding indexes $i$ everywhere. This directly gives us 

\begin{equation*}
\abs{E\left(\prod_{i=1}^2 (f-F_{x_i})((\psi_i)_{x_i}^{\lambda_i})\right)}\lesssim \lambda_1^\gamma\lambda_2^\gamma\,.
\end{equation*}
\end{proof}

\section{Stochastic reconstruction is stochastic sewing in 1 dimension}\label{sectionSewing}

As we discussed in the introduction, sewing and reconstruction are closely related to one another. It is well known \cite{BrouxZambotti},\cite{Zorin-Kranich}, that for a two-parameter process $(A(s,t))_{s,t\in[0,T]}$ which fulfills the conditions of the classical sewing lemma, the germ given by

\begin{equation*}
F_s(t) = \frac{\partial}{\partial t}A(s,t)
\end{equation*}

\noindent fulfills the conditions of the reconstruction theorem (as formulated in \cite{caravenna}). Here, $\frac{\partial}{\partial t}$ should be seen as a distributional derivative, i.e.

\begin{equation*}
F_s(\psi) = -\int_{-\infty}^{\infty} A(s,t)\psi'(t) dt
\end{equation*}

\noindent for any $\psi\in C_c^r$. 

\begin{remark}
We should note that the classical sewing lemma defines $A(s,t)$ only on $\Delta := \{(s,t)\in[0,T]^2~\vert~ s\le t\}$ and assumes $A(s,s) = 0$ for all $s$. In this case, we can extend $A$ anti-symmetrically by setting $A(t,s) := -A(s,t)$ for all $s\le t$.
\end{remark}

\noindent Using the above germ $(F_s)_{s\in[0,T]}$ and under the assumptions of the sewing lemma, it holds that $(F_s)_{s\in\R}$ can be reconstructed to get a distribution $f$ and that $f = \frac\partial{\partial t}I$ is the distributional derivative of the process $(I(t))_{t\in[0,T]}$ gained from the sewing lemma. 

\noindent Thus, sewing can indeed be seen as the one-dimensional case of reconstruction. As a result of \cite{brault}, Lemma 3.10, we can reverse the time derivative and regain $I(t)$ as ``$f(1_{[0,t]})$'', which is of course only rigorous as the function $z$ from \cite{brault}, Lemma 3.10. This allows one to construct the process $I$ from the reconstruction $f$.

Given all this, it should come as little surprise, that the statement stays true in the stochastic case: indeed, the stochastic reconstruction theorem is nothing else than the distributional version of Khoa Lê's stochastic sewing Lemma in one dimension. We recall said stochastic sewing lemma \cite{le}:

\begin{theorem}[stochastic sewing lemma]
Let $2\le p < \infty$, and let $(A(s,t))_{s,t\in[0,T]}$ be a two parameter process in $L_p(\Omega)$, which is adapted to some complete, right continuous filtration $\mathcal F_t$ in the sense, that $A(s,t)\in\mathcal F_t$ for all $0\le s\le t\le T$. Suppose that there is a $\gamma>0$, such that for all $0\le s\le u\le t\le T$, it holds that

\begin{align*}
\norm{A(s,t)-A(s,u)-A(u,t)}_{L_p}&\lesssim \abs{t-s}^{\frac 12+\gamma} \\
\norm{E^{\mathcal F_s}(A(s,t)-A(s,u)-A(u,t))}_{L_p}&\lesssim \abs{t-s}^{1+\gamma}.
\end{align*}

\noindent Then, there is a unique (up to modifications) process $(I(t))_{t\in[0,T]}$ in $L_p(\Omega)$, which is adapted to $\mathcal F_t$, has $I(0) = 0$ and fulfills for all $0\le s\le t\le T$

\begin{align*}
\norm{I(t)-I(s)-A(s,t)}_{L_p}&\lesssim \abs{t-s}^{\frac 12+\gamma} \\
\norm{E^{\mathcal F_s}(I(t)-I(s)-A(s,t))}_{L_p}&\lesssim \abs{t-s}^{1+\gamma}.
\end{align*}
\end{theorem}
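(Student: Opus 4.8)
The plan is to identify the stochastic sewing lemma with the one-dimensional case of Theorem \ref{stochastic_reconstruction}. I would work in dimension $d=1$ with the canonical scaling (so $\tilde r = r$), take $e=1$ and $E=1$ with the single filtration $(\mathcal F_t)$, and first extend $A$ antisymmetrically to all of $\R$ via $A(t,s):=-A(s,t)$, so that $A(s,t)$ is $\mathcal F_{\max(s,t)}$-measurable for every pair. I then define the germ $F_s(\psi):=-\int_\R A(s,t)\psi'(t)\,dt$ on $C_c^r$ for some fixed $r\ge 2$. A standard telescoping argument off the sewing hypotheses shows $\|A(s,t)\|_{L_p}$ is bounded on compacts, so each $F_s$ is a continuous linear map into $L_p$, i.e. a random germ; and the measurability of $A$ gives $(F_s)$ stochastic dimension $e=1$, since for $\psi$ supported in $(-\infty,y]$ the integrand $A(s,t)$ is $\mathcal F_{\max(s,y)}$-measurable throughout.

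The key computation is the coherence verification. Writing $\delta A_{s,u,t}:=A(s,t)-A(s,u)-A(u,t)$ and using $\int(\psi_u^\epsilon)'\,dt=0$, the $t$-independent term $A(s,u)$ is annihilated, so $(F_s-F_u)(\psi_u^\epsilon)=-\int \delta A_{s,u,t}\,(\psi_u^\epsilon)'(t)\,dt$. Because $\psi$ has positive support we stay in the regime $s\le u\le t$ with $|t-s|\lesssim|u-s|+\epsilon$, and $\int|(\psi_u^\epsilon)'|\,dt\lesssim\epsilon^{-1}$. Minkowski's inequality then turns the first sewing bound into $\|(F_s-F_u)(\psi_u^\epsilon)\|_{L_p}\lesssim\epsilon^{-1}(|u-s|+\epsilon)^{\frac12+\gamma}$, which is exactly \eqref{coherence} with $\alpha=-1$ and reconstruction-exponent equal to the sewing $\gamma$ (since $\gamma-\frac{E}{2}-\alpha=\gamma+\frac12$). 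Pulling $E^{\mathcal F_s}$ inside the integral and invoking the conditional sewing bound gives $\|E^{\mathcal F_s}(F_s-F_u)(\psi_u^\epsilon)\|_{L_p}\lesssim\epsilon^{-1}(|u-s|+\epsilon)^{1+\gamma}$, which is \eqref{coherence2} since $\gamma-\alpha=\gamma+1$. Hence $(F_s)$ is stochastically $(\gamma,-1)$-coherent, and with $r\ge 2$ the conditions $\alpha+\tilde r>0$ and $\gamma-\frac{E}{2}+\tilde r>0$ hold.

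Theorem \ref{stochastic_reconstruction} now produces a unique adapted random distribution $f$ satisfying \eqref{unique1} and \eqref{unique2}, namely $\|(f-F_s)(\psi_s^\lambda)\|_{L_p}\lesssim\lambda^{\gamma-\frac12}$ and $\|E^{\mathcal F_s}(f-F_s)(\psi_s^\lambda)\|_{L_p}\lesssim\lambda^{\gamma}$. I would then invert the distributional derivative as in \cite{brault}, Lemma 3.10, defining $I(t):=f(z_t)$ where $z_t$ is the primitive of (a mollification of) $1_{[0,t]}$; this yields an adapted process with $I(0)=0$. Using $1_{[s,t]}'=\delta_s-\delta_t$ and $A(s,s)=0$ one has, formally, $(f-F_s)(1_{[s,t]})=(I(t)-I(s))-A(s,t)$. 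Since $1_{[s,t]}=(t-s)\,\psi_s^{t-s}$ for $\psi=1_{[0,1]}$, the two reconstruction bounds at scale $\lambda=t-s$ pick up the extra factor $t-s$ and translate into precisely $\|I(t)-I(s)-A(s,t)\|_{L_p}\lesssim|t-s|^{\frac12+\gamma}$ and $\|E^{\mathcal F_s}(I(t)-I(s)-A(s,t))\|_{L_p}\lesssim|t-s|^{1+\gamma}$. For uniqueness, any competitor $J$ with $J(0)=0$ and the same two bounds has distributional derivative $\partial_t J$ satisfying \eqref{unique1}--\eqref{unique2} for the germ $F$ (the reverse translation), hence $\partial_t J=f$ by the uniqueness clause of Theorem \ref{stochastic_reconstruction}; two primitives agreeing at $0$ coincide, so $J=I$ up to modification.

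The step I expect to be the main obstacle is the rigorous passage between the pointwise increment formulation and the distributional one, since $1_{[s,t]}$ is not an admissible test function: one must regularize it and control the approximation error, which is exactly what \cite{brault}, Lemma 3.10 supplies. A secondary, purely technical point is matching the domains: Theorem \ref{stochastic_reconstruction} lives on $\R$ and insists on strictly positive test-function support, whereas the sewing lemma lives on $[0,T]$ under $s\le t$; this is reconciled by the antisymmetric extension of $A$ together with the locality remark that permits restricting to compact sets $K\subset[0,\infty)$.
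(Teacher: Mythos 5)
Your reduction to Theorem \ref{stochastic_reconstruction} cannot prove the stochastic sewing lemma as stated, because the germ $F_s(\psi)=-\int_\R A(s,t)\psi'(t)\,dt$ need not be well defined under the sewing hypotheses alone. Those hypotheses constrain only $\delta A_{s,u,t}=A(s,t)-A(s,u)-A(u,t)$, and the kernel of $\delta$ contains all exact increments: take $A(s,t)=g(t)-g(s)$ for a deterministic, everywhere finite but not locally integrable function $g$ (e.g.\ $g(t)=\abs{t-1/2}^{-1}$ for $t\neq 1/2$, $g(1/2)=0$). Then $\delta A\equiv 0$, every $A(s,t)$ lies in $L_p(\Omega)$, adaptedness is trivial, and the sewing lemma holds with $I(t)=g(t)-g(0)$; yet $\int A(s,t)\psi'(t)\,dt$ diverges for any test function whose support meets $1/2$. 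In particular your claim that ``a standard telescoping argument off the sewing hypotheses shows $\norm{A(s,t)}_{L_p}$ is bounded on compacts'' is false: telescoping expresses $A(s,t)$ through other uncontrolled values $A(t_i,t_{i+1})$ plus $\delta A$ terms, so it can never bound $A$ by $\delta A$ (and even an $L_p$ bound would not yield the pathwise or Bochner integrability needed to define the integral, absent any measurability of $t\mapsto A(s,t)$). This is exactly why the paper, in Section \ref{sectionSewing}, imposes the additional standing hypothesis that $A(s,\cdot)\in L_1^{loc}(\R)$ almost surely, and describes the correspondence as holding only ``under weak additional assumptions needed for the distributional framework''. With that hypothesis added, your argument would prove a strictly weaker statement than the one you were asked to prove.

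For context: the paper does not prove this theorem at all; it is recalled from Lê \cite{le}, whose proof works directly with partitions and the Doob-decomposition/BDG estimate \eqref{BDG}, with no distributions. What the paper proves in Section \ref{sectionSewing} runs in the opposite direction to your proposal: it takes the process $I$ supplied by Lê's lemma as given, verifies stochastic $(\gamma,-1)$-coherence of the germ (by the same computation as yours, whose exponent bookkeeping for \eqref{coherence} and \eqref{coherence2} is indeed correct), shows that $\tilde f=\frac{\partial}{\partial t}I$ satisfies \eqref{unique1} and \eqref{unique2}, and concludes $f=\tilde f$ by uniqueness — a route that only ever tests against genuine $C_c^r$ functions. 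Your direction, constructing $I$ from $f$ via \cite{brault} and recovering the pointwise increment bounds, additionally needs $F_s(1_{[s,t]})=A(s,t)$, i.e.\ pointwise evaluation of the path $A(s,\cdot)$ at $s$ and $t$; for paths that are merely locally integrable this is meaningless, so this translation step also requires regularity of $A$ in $t$ (e.g.\ a.s.\ continuity) that the statement does not grant. Both the existence step and the translation step of your proposal therefore rest on hypotheses absent from the lemma.
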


\noindent Note that we can easily extend $A(s,t)$ to be a process over $\R^2$, by setting $A(s,t) = A(0,t)$ for $s\le 0, t\in[0,T]$ and $A(s,t) = A(s,T)$ for $t\ge T,s\in[0,T]$. Should both parameter $s,t\notin[0,T]$, we simply set $A(s,t) = A(0,T)$ if $s\le 0$ and $t\ge T$ and $A(s,t) = 0$, if both are less than $0$ or greater than $T$. This leads to a process $I(t)$, which is the same for $t\in[0,T]$ and constant outside of this interval.

We want to set $F_s(t) = \frac{\partial}{\partial t}A(s,t)$, as above. To do so, we need to assume that almost surely $A(s,\cdot)$ is locally integrable. In practice, most candidates for $A$ are almost surely continuous or at least piecewise continuous, so this additional assumption is reasonable to make. Under this assumption, we define the germ

\begin{equation*}
F_s(\psi) = -\int_{-\infty}^{\infty} A(s,t)\psi'(t) dt
\end{equation*}

\noindent as before. Note that $\mathcal F_t$ fulfills our assumptions of completeness and right-continuity by assumption, and is trivially orthogonal as we only have one filtration. It follows that the germ $(F_s)$ has stochastic dimension $e=d=1$. We can now show the following:

\begin{theorem}
Let $A(s,t)$ fulfill the assumptions of the stochastic sewing lemma, and assume $A(s,\cdot)$ is in $L_1^{loc}(\R)$ almost surely. Then, $(F_s)_{s\in\R}$ is stochastically $\gamma$-coherent with stochastic dimension $1$. Let $I(t)$ be the process we get from the sewing lemma from $A(s,t)$, and let $f$ be the distribution gained from Theorem \ref{stochastic_reconstruction} applied to $(F_x)$. It holds that

\begin{equation*}
f(\psi) = -\int_{-\infty}^\infty I(t)\psi'(t) dt,
\end{equation*}

\noindent for all $\psi\in C_c^r$ a.s.
\end{theorem}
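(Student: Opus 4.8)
The plan is to split the proof into two independent pieces: first verifying that $(F_s)_{s\in\R}$ is stochastically $\gamma$-coherent with stochastic dimension $1$, and then identifying the reconstruction $f$ with the distributional derivative of $I$ via the uniqueness statement Lemma \ref{lem:prop3}. The single algebraic observation driving both pieces is that any quantity constant in $t$ integrates to zero against the derivative $(\psi_x^\lambda)'$ of a compactly supported test function, since $\int (\psi_x^\lambda)'(t)\,dt = 0$. This is exactly what converts the three-index sewing objects into two-index germ increments, and it is where the sewing structure enters.

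For coherence, fix $\psi\in C_c^r$ with $\norm{\psi}_{C_c^r}=1$ and $\supp(\psi)\subset[0,\tilde R]$, and take $x\le y$. Writing $\delta A(x,y,t) := A(x,t)-A(x,y)-A(y,t)$ and using $A(x,t)-A(y,t)=A(x,y)+\delta A(x,y,t)$, the constant term $A(x,y)$ drops out and I obtain the identity $(F_x-F_y)(\psi_y^\epsilon) = -\int \delta A(x,y,t)(\psi_y^\epsilon)'(t)\,dt$. Since $\supp((\psi_y^\epsilon)')\subset[y,y+\epsilon\tilde R]$ we have $x\le y\le t$ and $\abs{t-x}\lesssim\abs{x-y}+\epsilon$ on this support, while $\int\abs{(\psi_y^\epsilon)'(t)}\,dt\lesssim\epsilon^{-1}$. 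Plugging in the two sewing bounds on $\delta A$ (and commuting $E^{\mathcal F_x}$ with the Bochner integral) then yields $\norm{(F_x-F_y)(\psi_y^\epsilon)}_{L_p}\lesssim\epsilon^{-1}(\abs{x-y}+\epsilon)^{\frac 12+\gamma}$ and $\norm{E^{\mathcal F_x}(F_x-F_y)(\psi_y^\epsilon)}_{L_p}\lesssim\epsilon^{-1}(\abs{x-y}+\epsilon)^{1+\gamma}$. With $E=1$ and $\alpha=-1$ these are precisely \eqref{coherence} and \eqref{coherence2} for the coherence exponent $\gamma$ (taking $r\ge 2$ so that the parameter constraint $\alpha+\tilde r>0$ of Theorem \ref{stochastic_reconstruction} holds). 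The measurability \eqref{measurability} follows because, for $\supp(\psi)\subset(-\infty,y]$, each $A(s,t)$ with $t\le y$ is $\mathcal F_{\max(s,y)}$-measurable (using adaptedness for $t\ge s$ and antisymmetry together with completeness for $t<s$), so the integral defining $F_s(\psi)$ is as well.

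For the identification, set $g(\psi) := -\int_{-\infty}^\infty I(t)\psi'(t)\,dt$. A short estimate using $\sup_{t\in K}\norm{I(t)}_{L_p}<\infty$ (itself a consequence of $I(0)=0$ and the sewing bounds) shows $g$ is a random distribution, and $g$ has stochastic dimension $1$ since $I(t)\in\mathcal F_t$. The same cancellation, now applied to $R(x,t):=I(t)-I(x)-A(x,t)$ via $I(t)-A(x,t)=I(x)+R(x,t)$, gives $(g-F_x)(\psi_x^\lambda) = -\int R(x,t)(\psi_x^\lambda)'(t)\,dt$ for $\psi$ supported in $[\tilde C,\tilde R]$. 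The remainder bounds $\norm{R(x,t)}_{L_p}\lesssim\abs{t-x}^{\frac 12+\gamma}$ and $\norm{E^{\mathcal F_x}R(x,t)}_{L_p}\lesssim\abs{t-x}^{1+\gamma}$ from the sewing lemma, combined with $\abs{t-x}\lesssim\lambda$ and $\int\abs{(\psi_x^\lambda)'}\lesssim\lambda^{-1}$ on the support, deliver exactly \eqref{unique1} and \eqref{unique2}. Thus $g$ is a stochastic-dimension-$1$ random distribution satisfying the characterizing inequalities, and Lemma \ref{lem:prop3} forces $f=g$ up to modification, which is the claim.

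The genuinely delicate points are not the estimates but the bookkeeping: one must ensure the extension of $A$ to $\R^2$ (constant outside $[0,T]$) preserves the sewing bounds on the compact regions actually seen by the localized test functions, and that the antisymmetrized $A$ together with completeness gives the measurability \eqref{measurability} in the regime $t<s$. Everything else reduces to the single cancellation identity plus Hölder and conditional-expectation contraction estimates, so I expect the main obstacle to be presentational—phrasing the vanishing of the constant-in-$t$ terms cleanly enough that both the coherence computation and the uniqueness computation fall out of the one observation.
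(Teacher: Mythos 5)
Your proposal is correct and follows essentially the same route as the paper: the same cancellation identity (constant-in-$t$ terms vanish against $\psi'$), the same sup-over-support times $\int\abs{\psi'}\lesssim\lambda^{-1}$ estimates with the sewing bounds yielding $\alpha=-1$, and the same identification of $f$ with $-\int I(t)\psi'(t)\,dt$ via the uniqueness statement. The only differences are presentational: you spell out the measurability, the parameter constraint $r\ge 2$, and the random-distribution property of $g$, which the paper handles in brief remarks surrounding its proof.
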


\begin{proof}
Let us check the coherence property: Observe that

\begin{align*}
(F_s-F_u)(\psi) = \int_{-\infty}^\infty (A(s,t)-A(s,u)-A(u,t))\psi'(t) dt,
\end{align*}

\noindent where we used that $A(s,u)\int_{-\infty}^\infty \psi'(t) = 0$ since $\psi$ is only compactly supported. It follows that for all $s\le u$ and $\psi$ with compact, positive support

\begin{align*}
\norm{(F_s-F_u)(\psi_u^\lambda)}_{L_2} &\le \sup_{t\in \supp(\psi_u^\lambda)}\norm{A(s,t)-A(s,u)-A(u,t)}_{L_2}\int_{-\infty}^\infty \abs{(\psi_u^\lambda)'(t)} dt \\&\lesssim (\abs{u-s}+\lambda)^{\frac 12+\gamma}\lambda^{-1},
\end{align*}

\noindent and analogously

\begin{equation*}
\norm{E^{\mathcal F_s}(F_s-F_u)(\psi_u^\lambda)}_{L_2} \lesssim (\abs{u-s}+\lambda)^{1+\gamma}\lambda^{-1}.
\end{equation*}

\noindent Thus, $(F_s)_{s\in\mathbb R}$ is indeed stochastically $\gamma$-coherent with $\alpha = -1$ and there exists a reconstruction $f$. It remains to show that $f=\frac\partial{\partial t} I$. To this end, we calculate

\begin{align*}
\left(F_s-\frac\partial{\partial t} I\right)(\psi) = -\int_{-\infty}^\infty (A(s,t) - (I(t)-I(s)))\psi'(t) dt,
\end{align*}

\noindent where we again used that $I(s) \int\psi'(t) dt = 0$. It follows that for all test functions with strictly positive support

\begin{align*}
\norm{\left(F_s-\frac \partial{\partial t} I\right)(\psi_s^\lambda)}_{L_2} &\le \sup_{t\in \supp(\psi_s^\lambda)}\norm{A(s,t)-(I(t)-I(s))}_{L_2}\int_{-\infty}^\infty \abs{(\psi_s^\lambda)'(t)}dt \\
&\lesssim \lambda^{-\frac 12+\gamma},
\end{align*}

\noindent and analogously,

\begin{equation*}
\norm{E^{\mathcal F_s}\left(F_s-\frac\partial{\partial t} I\right)(\psi_s^\lambda)}_{L_2}\lesssim \lambda^\gamma.
\end{equation*}

\noindent By the uniqueness of the reconstruction, this concludes the proof.
\end{proof}

\section{Gaussian Martingale Measure}\label{sectionGMM}

As an application of stochastic reconstruction, we show that integration against Gaussian martingale measures can be seen as a product of a process in $C^\alpha$ and a distribution in $C^\beta$, similar to the young product between distributions presented in the introduction. The martingale properties of the measure will allow us to do this reconstruction up to $\alpha+\beta>-\frac 12$, in comparison to the classical assumption $\alpha+\beta>0$.

We begin by introducing the notation of martingale measures, which can be found in \cite{davar} or \cite{walshOriginal}.

\subsection{Gaussian martingale measures and Walsh-type Integration}

Loosely speaking, a Gaussian martingale measure is a Gaussian family $(W_t(A))_{t\ge 0, A\in \mathcal A_K}$ for some subset $\mathcal A_K\subset B(\R^d)$, such that

\begin{itemize}
\item $W_t(A)$ is a martingale in $t$, and
\item $W_t(A)$ is a measure in $A$ in the sense that $W_t(\emptyset) = 0$ and for all disjoint sets $(A_n)_{n\in\mathbb N}$, $W_t\left(\bigcup_n A_n\right) = \sum_n W_t(A_n)$ (where both equations hold almost surely).
\end{itemize}

\noindent To get a Gaussian measure (or a Gaussian family in general), we need to clarify its covariance. This is given through the notion of a covariance measure:

\begin{definition}
Let $K$ be a symmetric, positive definite and $\sigma$-finite signed measure on $(\R^d\times\R^d,B(\R^d)\otimes B(\R^d))$. Then, $K$ is called a \emph{covariance measure}, if there is a symmetric, positive definite, and $\sigma$-finite measure $\abs{K}$, such that for all $A,B\in B(\R^d)$:

\begin{equation*}
\abs{K(A\times B)}\le\abs{K}(A\times B).
\end{equation*}
\end{definition}

\noindent We set 

\begin{equation*}
\norm{f}_K^2 := \int_{\R^d\times\R^d} f(x)f(y) K(dx,dy),
\end{equation*}

\noindent and define $\norm{f}_{\abs K}$ analogously. We further set $\mathcal A_K := \{A\in B(\R^d)~\vert~K(A\times A)<\infty\}$. This allows us to formally define the Gaussian martingale measure $W$ as follows:

\begin{definition}
Let $K$ be a covariance measure. A \emph{Gaussian martingale measure} is a family of centered Gaussian variables

\begin{equation*}
(W_t(A)~\vert~t\ge 0,A\in\mathcal A_k),
\end{equation*}

\noindent such that

\begin{enumerate}[i)]
\item $W_0(A) = 0$.
\item For all $A\in \mathcal A_K$, $(W_t(A))_{t\ge 0}$ is a continuous martingale with respect to the filtration $\sigma(W_s(A)~\vert~0\le s\le t,A\in\mathcal A_K)$.
\item Almost surely, $W_t(\emptyset) = 0$ and $W_t\left(\bigcup_n A_n\right) = \sum_n W_t(A_n)$ holds for all disjoint sequences $(A_n)_{n\in\mathbb N}$.
\item Its covariance is given by

\begin{equation*}
E(W_s(A)W_t(B)) = (s\land t) K(A\times B).
\end{equation*}
\end{enumerate}
\end{definition}

\begin{remark}
One can show that iii) is redundant in the sense that it can be derived from the other three properties. Since we decided not to present this result, we included the property in the definition.
\end{remark}

\noindent We also denote the closure of the filtration of $W_t$ as

\begin{equation*}
\mathcal F_t := \overline{\sigma(W_s(A)~\vert~0\le s\le t,A\in\mathcal A_K)}\,,
\end{equation*}

\noindent which is complete, right continuous, and, since we only have one filtration, fulfills the orthogonality assumption. The most common example of such a measure is space-time white noise, which is a Gaussian martingale measure with covariance measure $K^{WN}(dx,dy) = dx \,\delta_x(dy)$, which should be read as

\begin{equation*}
\int_{\R^d\times\R^d} f(x)g(y) K^{WN}(dx,dy) = \int_{\R^d}f(x)g(x) dx\,.
\end{equation*}

\noindent The construction of the integral over such martingale measure is rather straight-forward: We call processes of the form

\begin{equation*}
H(s,x) = \sum_{k=1}^{n-1}\sum_{l=1}^{L_k} h_{k,l} 1_{(t_k,t_{k+1}]}(s) 1_{A_{k,l}}(x)
\end{equation*}

\noindent for $\mathcal F_{t_k}$ measurable random variables $h_{k,l}\in L_{\infty}(\Omega)$ and sets $A_{k,l}\in\mathcal A_K$ \emph{elementary processes}, and define the integral over such processes by

\begin{equation*}
\int_0^\infty\int_{\R^d} H(s,x) W(ds,dx) := \sum_{k=1}^{n-1}\sum_{l=1}^{L_k} h_{k,l}(W_{t_{k+1}}(A_{k,l})-W_{t_k}(A_{k,l})).
\end{equation*} 

\noindent We want to extend this definition to a certain $L_2$ space. To do this, let $\mathcal P$ be the predictable $\sigma$-algebra, which is generated by the elementary processes. We set

\begin{equation*}
L_2(W) := \left\{H \text{ predictable}, E\left(\int_0^\infty \norm{H(s,\cdot)}_{\abs K}^2ds\right)<\infty \right\}.
\end{equation*}

\noindent The following result can be found in \cite{walshOriginal}:

\begin{lemma}
The set of elementary processes is dense in $L_2(W)$.
\end{lemma}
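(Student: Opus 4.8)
The plan is to prove density by a Dynkin $\pi$--$\lambda$ argument localized to bounded ``boxes'', followed by truncation and an exhaustion of $\R^d$. Write $\bar{\mathcal E}$ for the closed linear span of the elementary processes inside the Hilbert space $L_2(W)$; the goal is to show $\bar{\mathcal E}=L_2(W)$. The only genuine analytic input is that bounded monotone pointwise convergence of the integrands, together with a fixed $L_2(W)$-dominating function, forces convergence in $L_2(W)$ via dominated convergence for the kernel norm $\norm{f}_{\abs K}^2=\int_{\R^d\times\R^d} f(x)f(y)\,\abs K(dx,dy)$. Concretely, if $1_{E_n}\uparrow 1_E$ with all slices contained spatially in a fixed $B\in\mathcal A_K$, then for each $(\omega,s)$ the integrand $1_{E\setminus E_n}(s,x)\,1_{E\setminus E_n}(s,y)$ decreases to $0$ and is dominated by $1_B(x)1_B(y)$, which is $\abs K$-integrable since $\abs K(B\times B)<\infty$; two applications of dominated convergence (once in the $\abs K$-integral, once in $E\int_0^\infty\!\cdot\,ds$) then give $\norm{1_E-1_{E_n}}_{L_2(W)}\to 0$.

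First I would fix $T>0$ and a set $B\in\mathcal A_K$ and argue on the box $\Omega\times(0,T]\times B$. The indicators $1_F(\omega)\,1_{(a,b]}(s)\,1_A(x)$ with $0\le a<b\le T$, $F\in\mathcal F_a$, and $A\in\mathcal A_K$ with $A\subseteq B$ are themselves elementary processes (take the coefficient $h=1_F$); they form a $\pi$-system (intersections stay of this form, using $\mathcal F_a\subseteq\mathcal F_{a'}$ for $a\le a'$ and closure of $\mathcal A_K$ under intersection) which generates the trace of the predictable $\sigma$-algebra $\mathcal P$ on the box. Let $\Lambda$ be the collection of predictable sets $E$ inside the box with $1_E\in\bar{\mathcal E}$. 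Then $\Lambda$ contains the whole box (take $F=\Omega\in\mathcal F_0$, $(a,b]=(0,T]$, $A=B$) and is a $\lambda$-system: it is stable under proper differences since $1_{E_1\setminus E_2}=1_{E_1}-1_{E_2}\in\bar{\mathcal E}$ for $E_2\subseteq E_1$, and under increasing unions by the dominated convergence input above. By Dynkin's theorem $\Lambda$ contains the entire trace $\sigma$-algebra, so $1_E\in\bar{\mathcal E}$ for every predictable $E$ inside the box. Taking linear combinations and then monotone limits of simple functions (again using the dominated convergence input, dominated by the box indicator) shows every bounded predictable $H$ supported in the box lies in $\bar{\mathcal E}$. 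Finally, for arbitrary $H\in L_2(W)$ I set $H_n:=H\,1_{\{\abs H\le n\}}\,1_{(0,n]}(s)\,1_{B_n}(x)$, where $(B_n)$ is an increasing sequence in $\mathcal A_K$ exhausting $\R^d$ (available by $\sigma$-finiteness of $\abs K$), and verify $\norm{H-H_n}_{L_2(W)}\to 0$ by dominated convergence with dominating function $H$; since each $H_n$ is bounded and supported in a box, $H_n\in\bar{\mathcal E}$, whence $H\in\bar{\mathcal E}$.

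The hard part is not combinatorial but measure-theoretic: the whole argument must be run inside the weighted space $L_2(W)$ rather than in a plain $L^2$ of a product measure, because the spatial norm $\norm{\cdot}_{\abs K}$ is a double-integral (kernel) norm. The step demanding care is therefore the dominated-convergence claim for $\norm{\cdot}_{\abs K}$ together with the requirement that $\abs K$ be $\sigma$-finite so that the spatial exhaustion $B_n\uparrow\R^d$ can be carried out within $\mathcal A_K$. Localizing to boxes is precisely what lets us apply Dynkin's theorem despite the constant process $1\notin L_2(W)$, and it is also what guarantees the uniform $\abs K$-integrable dominating slice $1_B$ needed at every monotone-limit step.
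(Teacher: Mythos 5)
Your proof is correct, but there is nothing in the paper to compare it against: the paper does not prove this lemma at all, it simply cites Walsh's notes \cite{walshOriginal}, where the statement is established by essentially the argument you give. Your scheme --- predictable rectangles $F\times(a,b]\times A$ with $F\in\mathcal F_a$ as a generating $\pi$-system, a Dynkin-class argument for the indicators reachable in the closed span, monotone limits controlled by dominated convergence in the kernel norm, then simple-function approximation, truncation and exhaustion --- is the standard route, and the localization to boxes $\Omega\times(0,T]\times B$ is exactly the care the argument requires, both because the constant process is not in $L_2(W)$ and because it supplies the $\abs K$-integrable dominating slice $1_B(x)1_B(y)$ at every monotone-limit step. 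What your write-up buys over the paper's citation is self-containedness, and it also makes visible where the structural hypotheses enter; on that score two small flags are worth recording, both traceable to the paper's definitions rather than to your argument. First, you use $\abs{K}(B\times B)<\infty$ for $B\in\mathcal A_K$, whereas the paper defines $\mathcal A_K$ by finiteness of $K(A\times A)$; since $L_2(W)$ is normed with $\norm{\cdot}_{\abs K}$, the lemma is only well posed (the elementary processes must themselves lie in $L_2(W)$) if $\mathcal A_K$ is read with the dominating measure $\abs K$, i.e.\ Walsh's convention, which is what you implicitly adopt. Second, your final exhaustion step needs an increasing sequence $B_n\in\mathcal A_K$ with $\bigcup_n B_n=\R^d$, at least up to an $\abs K$-negligible set; $\sigma$-finiteness of $\abs K$ as a measure on the product space $\R^d\times\R^d$ does not formally produce exhausting sets of the special product form $B_n\times B_n$, and in Walsh's framework this exhaustion property is built into the definition of $\sigma$-finiteness of the martingale measure itself. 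Under that intended reading neither point is a gap, and your proposal is a complete substitute for the citation.
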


\noindent Our notion of an integral extends to the space $L_2(W)$, as the following theorem states:

\begin{theorem}[Walsh Integral]

The above-given integral can be extended uniquely to a continuous, linear map

\begin{align*}
L_2(W)&\rightarrow M^2_0 \\
H&\mapsto \int_0^t\int_{\R^d} H(s,x) W(ds,dx),
\end{align*}

\noindent where $M^2_0$ is the set of square-integrable martingales starting from 0.

\end{theorem}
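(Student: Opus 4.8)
The plan is to use the standard continuous-extension (bounded linear transformation) argument built on an It\^o-type isometry. Concretely, I would first define, for an elementary process $H$, the candidate map $H\mapsto I_\cdot(H)$, where $I_t(H):=\int_0^t\int_{\R^d} H\,dW$, and check that it takes values in $M^2_0$; then prove an isometry controlling the $M^2_0$-norm of $I_\cdot(H)$ by the $L_2(W)$-norm of $H$; and finally invoke density of the elementary processes together with completeness of $M^2_0$ to extend the map uniquely and continuously to all of $L_2(W)$.

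For the first step, if $H=\sum_k\sum_l h_{k,l}1_{(t_k,t_{k+1}]}\otimes 1_{A_{k,l}}$, then $I_t(H)$ is a finite sum of terms $h_{k,l}\bigl(W_{t_{k+1}\wedge t}(A_{k,l})-W_{t_k\wedge t}(A_{k,l})\bigr)$. Since each $W_\cdot(A)$ is a continuous martingale with $W_0(A)=0$ and each $h_{k,l}$ is bounded and $\mathcal F_{t_k}$-measurable, the process $t\mapsto I_t(H)$ is a square-integrable martingale starting at $0$, hence an element of $M^2_0$.

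The heart of the proof is the isometry. Expanding the square of the terminal value $I_\infty(H)$ and taking expectations, the cross terms from two distinct time intervals $k<k'$ vanish: conditioning on $\mathcal F_{t_{k'}}$, the factor $W_{t_{k'+1}}(A_{k',l'})-W_{t_{k'}}(A_{k',l'})$ has conditional mean zero by the martingale property in ii), while every remaining factor is $\mathcal F_{t_{k'}}$-measurable. For terms on a common interval I would use that joint Gaussianity together with the covariance formula $E(W_s(A)W_t(B))=(s\wedge t)K(A\times B)$ makes the increment over $(t_k,t_{k+1}]$ independent of $\mathcal F_{t_k}$ with covariance $(t_{k+1}-t_k)K(A_{k,l}\times A_{k,l'})$, so that the $h_{k,l}$ factor out of the conditional expectation. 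Collecting the survivors yields
\begin{equation*}
E\left[\left(\int_0^\infty\int_{\R^d} H\,dW\right)^2\right]=E\left[\int_0^\infty\norm{H(s,\cdot)}_K^2\,ds\right].
\end{equation*}
Using the defining domination $\abs{K(A\times B)}\le\abs K(A\times B)$ of the covariance measure one checks the pointwise bound $\norm{H(s,\cdot)}_K^2\le\norm{H(s,\cdot)}_{\abs K}^2$, so the right-hand side is at most $\norm{H}_{L_2(W)}^2:=E\bigl[\int_0^\infty\norm{H(s,\cdot)}_{\abs K}^2\,ds\bigr]$. Since the $M^2_0$-norm of an $L_2$-bounded martingale is determined by its terminal value, $\norm{I_\cdot(H)}_{M^2_0}^2=E[I_\infty(H)^2]$, and this shows that $H\mapsto I_\cdot(H)$ is a bounded linear map from elementary processes into $M^2_0$.

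Finally, the preceding lemma states that the elementary processes are dense in $L_2(W)$, and $M^2_0$ is complete for its norm. A bounded linear map defined on a dense subspace of a normed space, with values in a Banach space, admits a unique continuous linear extension; applying this yields the desired map $L_2(W)\to M^2_0$, automatically linear, continuous, and unique, and agreeing with the elementary definition. The main obstacle is the isometry computation itself, in particular justifying the vanishing of the cross terms via the conditional martingale property and the independence of increments, and carefully transferring the bound from the signed measure $K$ to its dominating measure $\abs K$.
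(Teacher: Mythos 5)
Your proposal is correct and follows essentially the same route as the proof the paper relies on: the paper does not prove this theorem itself but cites Walsh's lecture notes, where the construction is exactly yours — define the integral on elementary processes, verify membership in $M^2_0$, establish the It\^o-type isometry with cross terms killed by conditioning and Gaussian independence of increments, dominate by the measure $\abs K$, and extend by density of elementary processes plus completeness of $M^2_0$. The one detail to state carefully is the domination step, which should read $\norm{H(s,\cdot)}_K^2\le \norm{\abs{H(s,\cdot)}}_{\abs K}^2$ (absolute values of the integrand inside the dominating norm), consistent with how the $L_2(W)$-norm is meant in Walsh's framework.
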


\noindent The proof can be found in \cite{walshOriginal}.

By a simple approximation argument, one can show that the \emph{Itô-isometry} holds for all $H\in L_2(W)$ and $t\ge 0$:

\begin{equation*}
E\left(\left(\int_0^t\int_{\R^d} H(s,x) W(ds,dx)\right)^2\right) = E\left(\int_0^t \norm{H(s,\cdot)}_K^2ds\right).
\end{equation*}

\noindent With this, we can tackle the main result of this section:

\subsection{Reconstruction of the Walsh-type Integral}

Let $W$ be a Gaussian martingale measure, and let $X(t,x)$ be a predictable process. We want to show, that the distribution associated with the Walsh-type integral

\begin{equation*}
\psi\mapsto \int_0^\infty\int_{\R^d} \psi(s,x)X(s,x)W(ds,dx)\,,
\end{equation*}

\noindent can be constructed with the stochastic reconstruction theorem. Note that the integral is a martingale in time $t$, making our stochastic dimension $e=1$. 

We need a local approximation for the integral, which we get by the constant approximating $X(s,x)\approx X(t,y)$ for $(s,x)$ close to $(t,y)$. This motivates using the germ $F_{t,y} = X(t,y) dW$, i.e.

\begin{equation*}
F_{t,y}(\psi) := X(t,y)\int_0^\infty \int_{\R^d} \psi(s,x)W(ds,dx).
\end{equation*}

\begin{remark}
While this allows us to formally define the germ on any test function, it is only really useful on test functions with a support on the right-hand-side of $t$, so that $\tau\mapsto X(t,y)\int_0^\tau\int_{R^d}\psi(s,x)W(ds,dx)$ is a martingale. Our left-sided version of the reconstruction theorem mirrors this property.
\end{remark}

\noindent For simplicity, we restrict ourselves to $L_2(\Omega)$, but one can always use Gaussianity to extend our result into $L_p$ spaces for more general $p$. The regularity of $X$ is governed by its Hölder-continuity, i.e. we assume $X\in C^\alpha(L_2(\Omega))$ for some $\alpha>0$, where we use the classical scaling $s=(1,1,\dots,1)$. It is however not clear a priori how to measure the regularity of $W$. To this end, let $K$ be the covariance measure of $W$. We say that $K$ has a scaling $\delta>0$ if it holds that

\begin{equation*}
K(\lambda dx,\lambda dy) = \lambda^\delta K(dx,dy).
\end{equation*}

\noindent The following lemma shows, that $W$ as a random distribution is $-d+\frac 12+\frac\delta 2$-Hölder continuous, if its covariance measure has the scaling $\delta$:

\begin{lemma}\label{lemma4}
Let $K$ have scaling $\delta$, as above. It holds that:

\begin{equation*}
\int_0^\infty\norm{\phi_{t,y}^\lambda(s,\cdot)}_K^2 ds \lesssim \lambda^{2\alpha}
\end{equation*}

\noindent for $\alpha = -d-\frac 12+\frac\delta2$. We say that $K$ is of homogeneity $\alpha.$
\end{lemma}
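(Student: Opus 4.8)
The plan is to compute the integral directly via two scaling substitutions---one in time, one in space---the latter exploiting the homogeneity $K(\lambda\,dx,\lambda\,dy)=\lambda^\delta K(dx,dy)$. First I would write out both the $K$-norm and the localization explicitly. Under the canonical scaling on $\R^{1+d}$ we have $\abs S=1+d$, so $\phi_{t,y}^\lambda(s,x)=\lambda^{-(1+d)}\phi\!\left(\tfrac{s-t}{\lambda},\tfrac{x-y}{\lambda}\right)$ and hence
\[
\int_0^\infty \norm{\phi_{t,y}^\lambda(s,\cdot)}_K^2\,ds = \frac{1}{\lambda^{2(1+d)}}\int_0^\infty\!\!\int_{\R^d\times\R^d}\phi\!\left(\tfrac{s-t}{\lambda},\tfrac{x-y}{\lambda}\right)\phi\!\left(\tfrac{s-t}{\lambda},\tfrac{x'-y}{\lambda}\right)K(dx,dx')\,ds.
\]
By the It\^o isometry this is exactly $\norm{W(\phi_{t,y}^\lambda)}_{L_2}^2$, so the estimate is precisely what is needed to place $W$ in $C^\alpha(L_2)$.

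Next I would rescale. Substituting $s=t+\lambda\tau$ produces one factor of $\lambda$ and replaces the time argument by $\tau$. For the spatial integral, with $\tau$ fixed I set $g(v):=\phi(\tau,v-y/\lambda)$ so that $\phi(\tau,\tfrac{x-y}{\lambda})=g(x/\lambda)$, and apply the scaling property through the dilation $x=\lambda u,\,x'=\lambda u'$ to get $\int\!\int g(x/\lambda)g(x'/\lambda)\,K(dx,dx')=\lambda^\delta\norm{g}_K^2=\lambda^\delta\norm{\phi(\tau,\cdot-y/\lambda)}_K^2$. Collecting the three powers of $\lambda$---the $\lambda^{-2(1+d)}$ from the normalization, the $\lambda$ from the time substitution, and the $\lambda^\delta$ from the spatial scaling---yields
\[
\int_0^\infty \norm{\phi_{t,y}^\lambda(s,\cdot)}_K^2\,ds = \lambda^{\,1+\delta-2(1+d)}\int_{\R}\norm{\phi(\tau,\cdot-y/\lambda)}_K^2\,d\tau = \lambda^{2\alpha}\int_{\R}\norm{\phi(\tau,\cdot-y/\lambda)}_K^2\,d\tau,
\]
since $1+\delta-2(1+d)=\delta-1-2d=2\alpha$ for $\alpha=-d-\tfrac12+\tfrac\delta2$.

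It then remains to bound the residual integral $\int_\R\norm{\phi(\tau,\cdot-y/\lambda)}_K^2\,d\tau$ independently of $\lambda$ and the base point $y$. Finiteness for each fixed $\tau$ is routine: using $\abs{K(A\times B)}\le\abs K(A\times B)$ one has $\norm{\phi(\tau,\cdot-y/\lambda)}_K^2\le\norm{\abs{\phi(\tau,\cdot-y/\lambda)}}_{\abs K}^2$, which is finite because $\phi$ is bounded with $\supp\phi$ compact and $\abs K$ is $\sigma$-finite (finite on the relevant bounded box), while $\tau$ ranges only over the compact time-support of $\phi$. The genuine difficulty, and the main obstacle I expect, is the uniformity: after rescaling the spatial bump sits at the shifted centre $y/\lambda$, so a careless estimate would retain a dependence on $y/\lambda$ that could blow up as $\lambda\to0$. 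This is harmless precisely when $K$ is spatially stationary---then $\norm{\phi(\tau,\cdot-y/\lambda)}_K^2=\norm{\phi(\tau,\cdot)}_K^2$ and the residual integral collapses to the finite constant $\int_\R\norm{\phi(\tau,\cdot)}_K^2\,d\tau$ depending only on $\phi$ and $K$. For the white-noise example this holds trivially, as $K^{WN}$ is translation invariant; in general one would deduce it from a uniform local-boundedness of $\abs K$ under spatial translation, with the constant hidden in $\lesssim$ absorbing the compact set containing $(t,y)$. Assembling these pieces delivers the claimed bound $\lesssim\lambda^{2\alpha}$.
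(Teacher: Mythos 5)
Your proof is correct and follows essentially the same route as the paper: write out the $K$-norm of the localized wavelet, rescale once in time and once in space, use the homogeneity $K(\lambda\,dx,\lambda\,dy)=\lambda^\delta K(dx,dy)$ to collect the exponent $1+\delta-2(1+d)=2\alpha$, and bound the residual integral by a constant. You are in fact more careful than the paper on the final step: the paper's proof simply equates $\int_0^\infty\norm{\phi_{t/\lambda,\,y/\lambda}(r,\cdot)}_K^2\,dr$ with $\int_0^\infty\norm{\phi(s,\cdot)}_K^2\,ds$, silently using the spatial translation invariance of $K$ that you explicitly identify as the needed hypothesis (and which indeed cannot be dropped, since the rescaled centre $y/\lambda$ escapes every compact set as $\lambda\to 0$).
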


\begin{proof}
A straightforward calculation shows:

\begin{align*}
\int_0^\infty \int_{\R^d\times\R^d} \phi_{t,y}^\lambda(s,x_1)&\phi_{t,y}^\lambda(s,x_2)K(dx_1,dx_2)ds \\& = \lambda^{-2d-2}\int_0^\infty \int_{\R^d\times\R^d} \phi_{t,y}\left(\frac s\lambda,\frac{x_1}\lambda\right)\phi_{t,y}\left(\frac s\lambda,\frac {x_2}\lambda\right)K(dx_1,dx_2)ds \\
&= \lambda^{-2d-2}\int_0^\infty \int_{\R^d\times\R^d} \phi_{\frac t\lambda,\frac y\lambda}(r,v_1)\phi_{\frac t\lambda,\frac y\lambda}(r,v_2)K(\lambda dv_1,\lambda dv_2)\lambda dr \\
&= \lambda^{-2d-1+\delta} \int_0^\infty \int_{\R^d\times\R^d} \phi_{\frac t\lambda,\frac y\lambda}(r,v_1)\phi_{\frac t\lambda,\frac y\lambda}(r,v_2)K(dv_1,dv_2) dr \\&= \lambda^{-2d-1+\delta}\int_0^\infty\norm{\phi_{\frac t\lambda,\frac y\lambda}(r,\cdot)}_K^2 dr \\&= \lambda^{-2d-1+\delta}\int_0^\infty\norm{\phi(s,\cdot)}_K^2 ds.
\end{align*}
\end{proof}

\begin{remark}\label{remark}
Using the Itô-isometry, this implies that

\begin{equation*}
\norm{\int_{\R_+\times\R^d}\psi(s,x)W(ds,dx)}_{L_2} = \left(\int_0^\infty\norm{\phi_{t,y}^\lambda(s,\cdot)}_K^2 ds\right)^{\frac 12}\lesssim \lambda^\alpha.
\end{equation*}

\noindent Thus, the distribution $\psi\mapsto \int_{\R_+\times\R^d}\psi(s,x)W(ds,dx)$ is indeed in $C^\alpha(L_2(\Omega))$.
\end{remark}

\noindent With this result, we can reconstruct the Walsh integral:

\begin{theorem}\label{Walsh}
Let $K$ be of homogeneity $\alpha$, and let $X\in C^\beta(L_2(\Omega))$ for some $\beta\in(0,1)$. Let $(F_{t,y})$ be as above. If $\alpha+\beta>-\frac 12$, then $(F_{t,y})$ fulfills the assumption of Theorem \ref{stochastic_reconstruction}, and thus there exists a distribution $f$ fulfilling \eqref{unique1} and \eqref{unique2}. It further holds, that $f$ is given by the Walsh-type integral, i.e.

\begin{equation}\label{walshRec}
f(\psi) = \int_0^{\infty}\int_{\R^d} X(s,x)\psi(s,x)W(ds,dx),
\end{equation}

\noindent for all $\psi\in C_c^r$ a.s., $r=\max(\lfloor-\alpha\rfloor+1,1)$.
\end{theorem}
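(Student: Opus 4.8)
The plan is to verify the hypotheses of Theorem \ref{stochastic_reconstruction} for the germ $(F_{t,y})$ and then to identify the reconstruction with the Walsh integral through the uniqueness part of that theorem. I work with the canonical scaling, so that $E = s_1 = 1$, and I take the homogeneity exponent $\alpha$ of $K$ (from Lemma \ref{lemma4} and Remark \ref{remark}) simultaneously as the coherence parameter. Writing $z=(t,y)$, I first record the adaptedness of Definition \ref{stochDimension}: for a test function $\psi$ whose time support lies in $(-\infty,\tau]$, the integral $W(\psi)$ is $\mathcal F_\tau$-measurable, and since $X(t,y)$ is $\mathcal F_t$-measurable, the product $F_{t,y}(\psi) = X(t,y)W(\psi)$ is $\mathcal F_{\max(t,\tau)}$-measurable, which is exactly \eqref{measurability} with $e=1$. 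That $F_{t,y}$ is a genuine $L_2$-valued random distribution follows, on the localized test functions relevant below, from the independence discussed next; on arbitrary test functions one invokes the Gaussianity of $W(\psi)$ together with the local $L_2$-bound on $X$.

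Next I check stochastic $(\gamma,\alpha)$-coherence with $\gamma := \alpha + \beta + \tfrac12$, observing that $\gamma > 0$ is equivalent to the standing assumption $\alpha + \beta > -\tfrac12$. For $z_1=(t_1,y_1)$, $z_2 = (t_2,y_2)$ with $t_1 \le t_2$ one has
\begin{equation*}
(F_{z_1}-F_{z_2})(\psi_{z_2}^\epsilon) = (X(z_1)-X(z_2))\,W(\psi_{z_2}^\epsilon),
\end{equation*}
and the structural fact I rely on is that $\psi_{z_2}^\epsilon$ has time support in $[t_2,\infty)$, so that $W(\psi_{z_2}^\epsilon)$ is a martingale increment after time $t_2$; in particular it is independent of $\mathcal F_{t_2}$ and satisfies $E^{\mathcal F_{t_2}}W(\psi_{z_2}^\epsilon) = 0$. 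Since $X(z_1)-X(z_2)$ is $\mathcal F_{t_2}$-measurable, the tower property gives
\begin{equation*}
E^{\mathcal F_{t_1}}(F_{z_1}-F_{z_2})(\psi_{z_2}^\epsilon) = E^{\mathcal F_{t_1}}\big[(X(z_1)-X(z_2))\,E^{\mathcal F_{t_2}}W(\psi_{z_2}^\epsilon)\big] = 0,
\end{equation*}
so \eqref{coherence2} holds trivially. For the plain bound \eqref{coherence} the same independence factorizes the variance, and $\norm{X(z_1)-X(z_2)}_{L_2}\lesssim \abs{z_1-z_2}^\beta$ together with the homogeneity estimate $\norm{W(\psi_{z_2}^\epsilon)}_{L_2}\lesssim\epsilon^\alpha$ of Remark \ref{remark} yields
\begin{equation*}
\norm{(F_{z_1}-F_{z_2})(\psi_{z_2}^\epsilon)}_{L_2} \lesssim \abs{z_1-z_2}^\beta\,\epsilon^\alpha \le \epsilon^\alpha(\abs{z_1-z_2}+\epsilon)^{\gamma-\frac12-\alpha}.
\end{equation*}
The condition on $r$ is met because $r = \max(\lfloor-\alpha\rfloor+1,1) > -\alpha$ forces $\alpha + \tilde r > 0$ and hence $\gamma - \tfrac E2 + \tilde r = \alpha + \beta + \tilde r > 0$.

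To identify $f$, let $g(\psi) := \int_0^\infty\int_{\R^d} X(s,x)\psi(s,x)\,W(ds,dx)$ be the Walsh integral. By uniqueness in Theorem \ref{stochastic_reconstruction} it suffices to show that $g$ is a random distribution of stochastic dimension $1$ obeying \eqref{unique1} and \eqref{unique2}. Stochastic dimension $1$ is immediate, since for $\psi$ with time support in $(-\infty,\tau]$ the integral $g(\psi)$ is $\mathcal F_\tau$-measurable. Writing
\begin{equation*}
(g-F_z)(\psi_z^\lambda) = \int_0^\infty\int_{\R^d}\big(X(s,x)-X(t,y)\big)\psi_z^\lambda(s,x)\,W(ds,dx),
\end{equation*}
the integrand is predictable and supported in time $>t$, so this is a martingale increment after $t$ and $E^{\mathcal F_t}(g-F_z)(\psi_z^\lambda)=0$, giving \eqref{unique2} at once. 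For \eqref{unique1} I would apply the Itô isometry, move the expectation inside by Fubini, bound $E[(X(s,x_1)-X(t,y))(X(s,x_2)-X(t,y))]$ by $\lambda^{2\beta}$ via Cauchy--Schwarz and the $L_2$-Hölder continuity of $X$ on the support of $\psi_z^\lambda$, and dominate the remaining $K$-pairing by the total-variation measure $\abs K$; the scaling argument of Lemma \ref{lemma4} applied to $\abs K$ then gives $\norm{(g-F_z)(\psi_z^\lambda)}_{L_2}^2 \lesssim \lambda^{2\beta}\lambda^{2\alpha} = \lambda^{2(\gamma-\frac12)}$. Uniqueness then forces $f = g$, which is \eqref{walshRec}.

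The main obstacle is the plain bound \eqref{unique1}: unlike the conditional estimates, which vanish outright by the martingale property, it requires controlling the Itô isometry of the two-point kernel $E[(X(s,x_1)-X(t,y))(X(s,x_2)-X(t,y))]$ against the \emph{signed} covariance measure $K$, so the passage through $\abs K$ and the reuse of the homogeneity scaling of Lemma \ref{lemma4} for $\abs K$ must be justified carefully. A secondary technical point is confirming that both $F_z$ and $g$ are bona fide $L_2$-valued random distributions, which hinges on the local $L_2$-boundedness of $X$ and on the independence of future increments of $W$ from the past.
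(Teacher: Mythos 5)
Your proposal follows essentially the same route as the paper's own proof: stochastic $(\gamma,\alpha)$-coherence with $\gamma=\alpha+\beta+\tfrac12$ is verified by letting the conditional term vanish through the martingale property of $W$ and factorizing the plain term via independence (equivalently, the Itô isometry), and the identification \eqref{walshRec} is obtained by checking \eqref{unique1} and \eqref{unique2} for the Walsh integral and invoking uniqueness from Theorem \ref{stochastic_reconstruction}. Your explicit passage to the dominating measure $\abs{K}$ when bounding the Itô isometry of the two-point kernel is a sound refinement of the paper's computation (which performs the same step silently), not a different argument.
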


\begin{remark}
Observe that by Lemma \ref{lemma4}, Theorem \ref{Walsh} automatically applies to all $K$ with a scaling $\delta$, such that

\begin{equation*}
\beta+\frac \delta2 > d.
\end{equation*}
\end{remark}

\begin{remark}
Looking at our example of white noise, we get that $K^{WN}(\lambda dx,\lambda dy) = (\lambda dx) \delta_x(\lambda dy) = \lambda^{d} K(dx,dy)$, which implies that $K^{WN}$ has scaling $\delta = d$ and thus, white noise has stochastic Hölder regularity $-\frac{d+1}{2}$. Therefore, we would need $\beta > \frac d2$ to apply the above theorem, which is a highly sub-optimal requirement. Indeed, it is well known that $\beta > 0$ suffices to define integration against space-time white noise \cite{KPZ}. The reason our result is much worse is that we only use the martingale properties in the time direction, i.e. we use stochastic dimension $e=1$, whereas white noise has additional martingale properties in all spatial directions and should have stochastic dimension $e = d+1$. Theorem \ref{Walsh} is much better suited to deal with cases of colored noise, which only have martingale properties in the time direction. The case of space-time white noise will be discussed at length in Section \ref{sectionWN}.
\end{remark}

\begin{proof}
We first show that the assumptions of Theorem \ref{stochastic_reconstruction} are fulfilled: Let $t\le s$, and let $\psi$ be a test function with support in $[0,1]\times[-1,1]^d$. It holds that:

\begin{align*}
E^{\mathcal F_{t}} &(F_{t,y}-F_{s,x})(\psi_{s,x}^\lambda) \\
&=E^{\mathcal F_{t}} \left((X(t,y)-X(s,x)) E^{\mathcal F_s}\int_0^\infty\int_{\R^d} \psi_{s,x}^\lambda(u,v) W(du,dv)\right) = 0,
\end{align*}

\noindent since $\int_0^\tau\int_{\R^d}\psi_{s,x}^\lambda(u,v) W(du,dv)$ is a martingale in $\tau$ and $s$ is on the edge of the support of $\psi_{s,x}^\lambda$. It remains to show, that


\begin{equation*}
\norm{(F_{t,y}-F_{s,x})(\psi_{s,x}^\lambda)}_{L_2} \lesssim \lambda^{\tilde\alpha}(\abs{(s,x)-(t,y)}+\lambda)^{\gamma-\frac 12-\tilde\alpha}
\end{equation*}

\noindent for some $\tilde\alpha\in\R, \gamma > 0$. We chose $\gamma = \alpha+\beta+\frac 12$ and observe the following:

\begin{align}
\norm{(F_{t,y}-F_{s,x})(\psi_{s,x}^\lambda)}_{L_2}^2 &= \norm{(X(t,y)-X(s,x)) \int_0^\infty\int_{\R^d} \psi_{s,x}^\lambda(u,v) W(du,dv)}_{L_2}^2 \nonumber
\nonumber\\
&\le \int_0^\infty\int_{\R^d\times\R^d} \underbrace{\norm{(X(t,y)-X(s,x))}_{L_2}^2}_{\lesssim \abs{(t,y)-(s,x)}^{2\beta}} \psi_{s,x}^\lambda(u,v)\psi_{s,x}^\lambda(u,w) K(dv,dw) du \nonumber
\\&\lesssim \abs{(t,y)-(s,x)}^{2\beta}  \int_0^\infty \norm{\psi_{s,x}^\lambda(u,\cdot)}_K^2 du \nonumber
\\&\lesssim \abs{(t,y)-(s,x)}^{2\beta}\lambda^{2\alpha}. \label{calc}
\end{align}

\noindent Thus, using $\tilde \alpha = \alpha$, it follows that

\begin{align*}
\norm{(F_{t,y}-F_{s,x})(\psi_{s,x}^\lambda)}_{L_2} \lesssim \lambda^{\tilde \alpha} (\abs{(t,y)-(s,x)}+\lambda)^{\alpha+\beta-\tilde\alpha} = \lambda^{\tilde \alpha} (\abs{(t,y)-(s,x)}+\lambda)^{\gamma-\frac 12-\tilde\alpha}.
\end{align*}

\noindent This shows that there is a unique reconstruction $f$. To find $r$, note that the proof of Theorem \ref{stochastic_reconstruction} requires $\tilde r = r$ to be strictly greater than the absolute value of $\tilde \alpha = \alpha$, and $r> -\gamma+\frac 12 > +\frac 12$. It follows that $r=\max(\lfloor-\alpha\rfloor+1,1)$.

It remains to show \eqref{walshRec}. To this end, assume that the support of $\psi$ is in $[\tilde C,\tilde R]\times[-1,1]^d$ for some $0<\tilde C<\tilde R$. We need to show that \[\tilde f(\psi) := \int_0^{\infty}\int_{\R^d} X(s,x)\psi(s,x)W(ds,dx)\] fulfills \eqref{unique1} and \eqref{unique2}. Note that \eqref{unique2} is again obvious due to the martingale properties of \newline $\int_0^t\int_{\R^d} X(s,x)\psi(s,x)W(ds,dx)$, so it remains to show \eqref{unique1}. Using the same calculation as in \eqref{calc}, we see that

\begin{align*}
\norm{(\tilde f- F_{t,y})(\psi_{t,y}^\lambda)}_{L_2} &= \norm{\int_0^\infty\int_{\R^d} (X(s,x)-X(t,y))\psi_{t,y}^\lambda(s,x)W(ds,dx)}_{L_2} \\
&\lesssim \sup_{(s,x)\in \supp(\psi_{t,y}^\lambda)} \underbrace{\norm{X(s,x)-X(t,y)}_{L_2}}_{\lesssim \lambda^\beta} \int_0^\infty \norm{\psi_{s,x}^\lambda}_K^2 ds \\&\lesssim \lambda^{\alpha+\beta} = \lambda^{\gamma-\frac 12}.
\end{align*}

\noindent Thus, $\tilde f = f$ up to modifications, which finishes the proof.
\end{proof}

\section{Integration against white noise}\label{sectionWN}

Let us take a closer look at space-time white noise, which is given by the Gaussian martingale measure $W^{WN}$ characterized by its covariance measure $K^{WN}(dx,dy) = dx\,\delta_x(dy)$. As mentioned above, it has scaling $\delta = d$ and does not fall into the setting of Theorem \ref{Walsh}. This is not at all surprising, as the previous section assumes a stochastic dimension of $e=1$, whereas space-time white noise has martingale properties in all of its directions and should be treated as $e= d+1$. This makes integration against white noise an easy example of a stochastic reconstruction of a non-trivial stochastic dimension, which is our main motivation for discussing it in this paper.

Let us first discuss the construction of space-time white noise: Treating it as a Gaussian martingale measure comes with the downside that we treat time and space differently in the construction, whereas white noise treats both the same. If we look at the space $\R^{d+1}$ containing vectors of the form $(t,x_1,\dots, x_d)$ and look at a Gaussian family $(\tilde W(A))_{A\in\mathcal A_k}$ for $\mathcal A_k\subset B(\R^{d+1})$ with covariance $E(\tilde W(A)\tilde W(B)) = \tilde K(A,B)$ and covariance measure $\tilde K(dx,dy) = dx\,\delta_x(dy)$ in $\R^{d+1}$, it is easy to see that $W^{WN}_t(A) = \tilde W([0,t]\times A)$ in distribution. So it is far more natural to define space-time white noise as a Gaussian family over sets in $\R^{d+1}$, and forget that the first variable is given by time.

The most common notion of white noise takes this idea a bit further, and directly defines white noise as a random distribution: As shown in the literature (e.g. \cite{janson},\cite{nualart}) there exists a linear map $\xi:L_2(\R^d)\to L_2(\Omega)$ such that for each $\psi\in L_2(\R^d)$, $\xi(\psi)$ is a centered Gaussian random variable and the family $(\xi(\psi))_{\psi\in L_2(\R^d)}$ has the following covariance:

\begin{equation}\label{covarianceNew}
E(\xi(\psi)\xi(\phi)) = \int_{\R^d}\psi(z)\phi(z) dz\,,
\end{equation}

\noindent which is also often referred to as $E(\xi(x)\xi(y)) = \delta_x(y)$.  This makes $\xi$ a random $L_2(\R^d)$ distribution (continuity follows from $\norm{\xi(\phi)}_{L_2(\Omega)}^2 = \int_{\R^d} \phi(x)^2 dx = \norm{\phi}_{L_2(\R^d)}$ by \eqref{covarianceNew}). The connection to $\tilde W$ is given by $\tilde W(A) = \xi(1_A)$ for all compact sets $A\in B(\R^d)$, and since $\xi: L_2(\R^d)\mapsto L_2(\Omega)$ is linear and continuous, defining it on the indicator functions uniquely determines it on all of $L_2(\R^d)$. One can think of $\xi(\psi)$ as the integration of white noise against deterministic test functions, i.e. $``\xi(\psi) = \int_{\R^d}\psi(x)\tilde W(dx)$''.

In the remainder of this section, we will call $\xi$ white noise and use the notation $\xi(\psi) = \int_{\R^d}\psi(x)\xi(dx)$.

Our goal in this section is to use stochastic reconstruction to construct the higher-order iterated integrals

\begin{equation}\label{whiteNoiseInt}
\int_{\R^d\times\dots\times\R^d} \psi(z_1,\dots,z_n)\xi(dz_1)\dots\xi(dz_n)\,,
\end{equation}

\noindent
for $\psi\in L_2(\R^{d\times n})$, which make up the Wiener chaos \cite{janson}, \cite{nualart}. To do so, we use the approach of \cite{KPZ}: Set $X_0(z_1,\dots, z_n) := \psi(z_1,\dots,z_n)$ and for $k=1,\dots, n$, we integrate one variable out: $X_{k+1}(z_1,\dots,z_{n-k-1}) = \int X_k(z_1,\dots, z_{n-k})\xi(dz_{n-k})$. This way, $X_n\in L_2(\Omega)$ is the desired integral.

So the problem comes down to defining the integral 

\begin{equation*}
\int X(z)\xi(dz)
\end{equation*}

\noindent for a random process $X$, which we will do via stochastic reconstruction. Since $\xi$ is defined by its covariance, Theorem \ref{stochastic_recon_covariance} will be perfectly suited to deal with this, but let us shortly highlight what would happen if one wants to use Theorem \ref{stochastic_reconstruction}: To do so, one needs to fix filtrations 

\begin{equation*}
\mathcal F^{(i)}_{t} = \overline{\sigma\left(\xi(\psi)~\big|~ \supp(\psi)\subset\R^{i-1}\times(-\infty,t]\times\R^{d-i}\right)}.
\end{equation*}

\noindent These filtrations are the same as the filtrations $\underline{\underline{F_t^i}}$ generated by a Brownian sheet in \cite{walshMultiparameter}. As John B. Walsh shows in this paper, they fulfill our assumptions of completeness, right continuity, and orthogonality. (Orthogonality is equivalent to conditional independence, see Remark \ref{rem:condIndependence}.)

Note that by \eqref{covarianceNew}, for $\psi,\phi\in L_2(\R^d)$ with disjoint support, $\xi(\psi)$ and $\xi(\phi)$ are uncorrelated. Since $\xi$ is a Gaussian family, this implies that $\xi(\psi)$ and $\xi(\phi)$ are independent. Thus, for a test function with positive support and any $i=1,\dots,d$, it follows that $\xi(\psi_z^\lambda)$ is independent of $\mathcal F^{(i)}_{\pi_i(z)}$.

For a stochastic process $X$, which is adapted to all directions $i=1,\dots,d$ in the sense that $X(z)\in\mathcal F^{(i)}_{\pi_i(z)}$ and under the assumption that $X\in C^\gamma(L_2(\Omega))$ for some $\gamma>0$, one can now show that the germ $F_z(\psi) := X(z)\xi(\psi)$ is indeed stochastically $\gamma$-coherent with stochastic dimension $e=d$ and can be reconstructed. Moreover, it will turn out that the reconstruction is nothing else than the classical integral $f(\psi) = \int_{\R^d} X(z)\psi(z) \xi(dz)$, which we define in \eqref{classicalWNInt}.

However, there is one subtlety hidden in this construction: If we want to use this to construct \eqref{whiteNoiseInt}, one notices that $X_k(z_1,\dots, z_{n-k})$ is in general not $\mathcal F^{(i)}_{\pi_i(z_{n-k})}$ adapted. We can save this by setting

\begin{equation*}
X_{k+1}(z_1,\dots,z_{n-k-1}) = \int_{z_{n-k}\le z_{n-k-1}} X_k(z_1,\dots,z_{n-k}) \xi(d z_{n-k})\,,
\end{equation*}

\noindent where we say $x \le y$ if and only if $\pi_i(x)\le \pi_i(y)$ for $i=1,\dots, d$. This ensures that $X_{k}(z_1,\dots,z_{n-k})$ is $\mathcal F^{(i)}_{\pi_i(z_{n-k})}$ measurable. However, if one does this, one only succeeds in defining the iterated integral

\begin{equation*}
\int_{z_1\le\dots\le z_n\in\R^d} \psi(z_1,\dots,z_n)\xi(dz_1)\dots\xi(dz_n)\,.
\end{equation*}

\noindent This somehow resembles the construction in \cite{KPZ}, where one only manages to directly construct the integral on the set $t_1\le t_2\le\dots\le t_n$, where $t_i = \pi_1(z_i)$ is the time variable of the vector $z_i$. In this case, the construction can be saved by using the symmetry of white noise: By assumption, it holds that

\begin{equation*}
\int_{\R^d} \psi(z_1,\dots,z_n)\xi(dz_1)\dots\xi(dz_n) = \int_{\R^d} \psi(z_{\sigma(1)},\dots,z_{\sigma(n)})\xi(dz_1)\dots\xi(dz_n)
\end{equation*}

\noindent for all permutations $\sigma$. Thus, if one can define the integral for $t_1\le\dots\le t_n$, one can use permutations to extend the integral to all of $\R^d$.

Unfortunately, this does no longer work if one is only able to intrinsically define \eqref{whiteNoiseInt} over the area $z_1\le\dots\le z_n$. For example for two points $z_1,z_2$ with $\pi_1(z_1)<\pi_1(z_2)$ and $\pi_2(z_1)>\pi_2(z_2)$, there is no permutation $\sigma:\{1,2\}\to\{1,2\}$ such that $z_{\sigma(1)}\le z_{\sigma(2)}$. Thus, we can not extend the integral to the whole space by symmetry alone. We know of three ways, how one can get around this subtlety:

\begin{itemize}
\item By using time-swapped filtrations

\begin{equation*}
\tilde{\mathcal F}^{(i)}_{t} = \overline{\sigma\left(\xi(\psi)\big| \supp(\psi)\subset\R^{i-1}\times[t,\infty)\times\R^{d-i}\right)}\,,
\end{equation*}

\noindent one can construct integrals of the form $ \int_{z_{n-k}\ge z_{n-k-1}} X_k(z_1,\dots,z_{n-k}) \xi(d z_{n-k})$. By cleverly using combinations of time-swapped filtrations in some directions $\eta\subset\{1,\dots,d\}$, and the normal filtrations in the other directions $\{1,\dots,d\}\setminus\eta$, one can construct the integral over all areas individually. To our knowledge, this approach would work, although it is quite tedious.

\item One can construct sigma-algebras which more accurately describe the measurability of $F_x(\psi)$. Right now, we only consider sigma-algebras depending on $\max(\pi_i(x),\pi_i(y))$, where the support of $\psi$ is contained in $(-\infty,\pi_1(y)]\times\dots\times(-\infty,\pi_d(y)]$. If one constructs sigma-algebras depending on $x$ and the support of $\psi$ in a more sophisticated manner, it is possible to show that the reconstructing sequence $f^{(n)}$ \eqref{eq:reconstructingSequence} converges to a limit $f$ for all $X$ such that $X(z)\in\mathcal F^{(1)}_{\pi_1(z)}$, reducing the adaptedness assumptions on $X$ to only requiring adaptedness in time. However, the problem with this approach is that it is highly unclear how the adaptedness assumption on $F_x(\psi)$ should translate to some assumption on $f$ since it no longer has an index $x$. So in this setting, it is not clear under which condition $f$ is unique. 

In Conclusion, this approach would allow one to show that the reconstructing sequence for the germ $F_z(\psi)$ defined in Theorem \ref{theoWN} converges against the correct limit \eqref{eq:WNInt}, but we are currently unable to generalize this observation to a new version of the stochastic reconstruction theorem.

\item The easiest way to deal with this problem is to use Theorem \ref{stochastic_recon_covariance}. It allows us to directly use the covariance of our germ and thus allows us to easily reconstruct the integral, even if $X$ is only adapted in time. As mentioned above, one can use this together with the symmetry of white noise to construct \eqref{whiteNoiseInt} over all of $\R^d$. In the remainder of this section, we will present this approach.

\end{itemize}

\noindent Let $(\mathcal F^{(1)}_t)_{t\in\R}$ be as above, let $X\in C^\gamma(L_2(\Omega))$ for some $\gamma > 0$ and assume that $X(z)\in\mathcal F^{(1)}_{\pi_1(z)}$ for all $z\in\R^d$. We can then show the main result of this section:

\begin{theorem}\label{theoWN}
Let $F_z(\psi) = X(z)\xi(\psi)$. Then $(F_z)_{z\in\R^d}$ is stochastically $(\gamma,\alpha)$-coherent in the covariance setting with stochastic dimension $e=d$ and $\alpha = -\frac d2$. Thus, it can be reconstructed on $C_c^{r}$ with $r=\lfloor d/2 \rfloor +1$.
\end{theorem}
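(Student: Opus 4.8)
The plan is to verify the two inequalities of Definition \ref{defCoherenceCov} directly, with $e=d$ (so that under the canonical scaling $E=\sum_{i=1}^d s_i=d$ and $\frac E2=\frac d2$) and $\alpha=-\frac d2$, and then to check that the parameter constraints of Theorem \ref{stochastic_recon_covariance} force $r=\lfloor d/2\rfloor+1$. Throughout I write $\rho_i:=(\psi_i)_{y_i}^{\epsilon_i}$ and $\Delta X_i:=X(x_i)-X(y_i)$, and I use that the admissible test functions satisfy $\supp(\psi_i)\subset[0,\tilde R]^d$, so that $\supp(\rho_i)\subset\prod_{k=1}^d[\pi_k(y_i),\pi_k(y_i)+\epsilon_i\tilde R]$ lies componentwise to the right of $y_i$, hence of $x_i$ since $x_i\le y_i$. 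I would also first record that $(F_z)$ is a genuine random germ into $L_2(\Omega)$: by Gaussian hypercontractivity $\norm{\xi(\psi)}_{L_4}\lesssim\norm{\xi(\psi)}_{L_2}=\norm{\psi}_{L_2}\lesssim\norm{\psi}_{C_c^r}$, which together with Cauchy--Schwarz gives $\norm{X(z)\xi(\psi)}_{L_2}\lesssim\norm{\psi}_{C_c^r}$ (immediate in the Wiener-chaos application, where $X$ has equivalent $L_p$-norms).

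For \eqref{coherenceCov1}: because $\supp(\rho_1)$ has time-coordinate at least $\pi_1(y_1)$, the variable $\xi(\rho_1)$ is independent of $\mathcal F^{(1)}_{\pi_1(y_1)}$, while $X(x_1)$ and $X(y_1)$ — hence $\Delta X_1$ — are $\mathcal F^{(1)}_{\pi_1(y_1)}$-measurable. Thus $\Delta X_1$ and $\xi(\rho_1)$ are independent and
\begin{equation*}
\norm{(F_{x_1}-F_{y_1})(\rho_1)}_{L_2}^2 = E(\Delta X_1^2)\,E(\xi(\rho_1)^2) = \norm{\Delta X_1}_{L_2}^2\,\norm{\rho_1}_{L_2(\R^d)}^2 .
\end{equation*}
A change of variables gives $\norm{\rho_1}_{L_2(\R^d)}^2=\epsilon_1^{-d}\norm{\psi_1}_{L_2(\R^d)}^2\lesssim\epsilon_1^{-d}$, while $X\in C^\gamma(L_2)$ gives $\norm{\Delta X_1}_{L_2}\lesssim\abs{x_1-y_1}^\gamma$. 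Hence the left-hand side is $\lesssim\epsilon_1^{-d/2}\abs{x_1-y_1}^\gamma\le\epsilon_1^{-d/2}(\abs{x_1-y_1}+\epsilon_1)^{\gamma}$, which is exactly $\epsilon_1^{\alpha}(\abs{x_1-y_1}+\epsilon_1)^{\gamma-\frac E2-\alpha}$ for $\alpha=-\frac d2$.

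The heart of the argument is \eqref{coherenceCov2}, where I claim the covariance is in fact identically zero. Relabel so that $\pi_1(y_2)=\max(\pi_1(y_1),\pi_1(y_2))$. Since $e=d$, the stochastic effective supports coincide with the full effective-support boxes $\prod_{k=1}^d[\pi_k(x_i),\pi_k(y_i)+\epsilon_i\tilde R]$, so their disjointness forces $\supp(\rho_1)\cap\supp(\rho_2)=\emptyset$. Now $\supp(\rho_2)\subset\{w:\pi_1(w)\ge\pi_1(y_2)\}$ is, up to a null set, disjoint both from $\supp(\rho_1)$ and from the half-space $\{w:\pi_1(w)\le\pi_1(y_2)\}$. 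By independence of white noise over disjoint regions, $\xi(\rho_2)$ is therefore independent of the $\sigma$-algebra generated by $\xi(\rho_1)$ together with $\mathcal F^{(1)}_{\pi_1(y_2)}$; and the latter contains all four variables $X(x_1),X(y_1),X(x_2),X(y_2)$, each of time-coordinate at most $\pi_1(y_2)$. Since $E(\xi(\rho_2))=0$, I conclude
\begin{equation*}
E\Big(\Delta X_1\,\xi(\rho_1)\,\Delta X_2\,\xi(\rho_2)\Big) = E\big(\Delta X_1\,\xi(\rho_1)\,\Delta X_2\big)\,E\big(\xi(\rho_2)\big) = 0 ,
\end{equation*}
so \eqref{coherenceCov2} holds trivially. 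The point I would stress is that this one computation covers both the case where the two boxes are separated in time and the case where they are separated only in a spatial direction; the spatial case is the delicate one, since there the time-filtration alone does not decouple the factors and one must genuinely invoke the full spatial independence structure of white noise. This is the step where I expect the main subtlety of the write-up to lie.

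Finally I would check the hypotheses of Theorem \ref{stochastic_recon_covariance}. With the canonical scaling $\tilde r=r$, and $\alpha=-\frac d2<0$, $E=d$, the requirement $\alpha+\tilde r>0$ reads $r>\frac d2$, i.e. $r=\lfloor d/2\rfloor+1$, and then $\gamma-\frac E2+\tilde r=\gamma-\frac d2+r>\gamma>0$ is automatic. Theorem \ref{stochastic_recon_covariance} then applies and yields the reconstruction on $C_c^r$, completing the proof.
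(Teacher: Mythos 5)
Your proposal is correct and follows the same overall strategy as the paper's proof: establish \eqref{coherenceCov1} from the independence of $X(x_1)-X(y_1)$ and $\xi((\psi_1)_{y_1}^{\epsilon_1})$ together with the scaling $\norm{(\psi_1)_{y_1}^{\epsilon_1}}_{L_2(\R^d)}\lesssim\epsilon_1^{-d/2}$, show that the covariance in \eqref{coherenceCov2} vanishes identically, and then apply Theorem \ref{stochastic_recon_covariance}. The one place where you genuinely diverge is the mechanism for the vanishing covariance. Writing $\rho_i=(\psi_i)_{y_i}^{\epsilon_i}$, the paper splits $\rho_1=\rho_1 1_{\pi_1(\cdot)\le\pi_1(y_2)}+\rho_1 1_{\pi_1(\cdot)>\pi_1(y_2)}$ and conditions on $\mathcal F^{(1)}_{\pi_1(y_2)}$: the first piece dies because $E\left[\xi(\rho_2)\,\middle\vert\,\mathcal F^{(1)}_{\pi_1(y_2)}\right]=0$, the second because $\xi\left(\rho_1 1_{\pi_1(\cdot)>\pi_1(y_2)}\right)\xi(\rho_2)$ is independent of the past and has expectation $\scalar{\rho_1 1_{\pi_1(\cdot)>\pi_1(y_2)},\rho_2}_{L_2(\R^d)}=0$ by disjointness of supports. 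You bypass this decomposition with a single joint-independence claim: $\xi(\rho_2)$ is uncorrelated with $\xi(\rho_1)$ and with every generator of $\mathcal F^{(1)}_{\pi_1(y_2)}$, hence by joint Gaussianity independent of the $\sigma$-algebra they generate together, so the expectation factorizes in one stroke (the integrability this factorization needs comes for free via Tonelli, since the product of the two germ differences lies in $L_1$ by Cauchy--Schwarz). Both arguments ultimately rest on the same two facts --- adaptedness of $X$ in time and independence of white noise over disjoint regions --- but your version is slightly more compact and makes explicit that the spatially-separated case is exactly where genuine spatial independence, rather than the time filtration, does the work; the paper's proof handles that case implicitly through its second conditional-expectation term without comment. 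Your closing parameter check, $\alpha+\tilde r>0$ forcing $r=\lfloor d/2\rfloor+1$, is also a useful addition: the paper asserts this value of $r$ in the statement but does not rederive it inside the proof.
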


\begin{proof}
We need to show \eqref{coherenceCov1} and \eqref{coherenceCov2}. To see \eqref{coherenceCov1}, let $x,y\in K$ with $\pi_i(x)\le\pi_i(y)$ for $i=1,\dots,e$, and let $\psi\in C_c^r$ have a positive support. Since $X(x)-X(y)$ is $\mathcal F_{\pi_1(y)}^1$ measurable, it is independent of $\xi(\psi_y^\lambda)$ for any $\lambda\in (0,1]$. It follows that

\begin{align*}
\norm{(F_x-F_y)(\psi_y^\lambda)}_{L_2} = \norm{X(z)-X(y)}_{L_2}\norm{\xi(\psi_y^\lambda)}_{L_2} \lesssim \norm{x-y}^{\gamma}\lambda^{-\frac d2}\,.
\end{align*}

It remains to show \eqref{coherenceCov2}. Let $\psi_1,\psi_2$ be two test functions with positive support and let $x_i,y_i,\lambda_i$ be as in Theorem \ref{stochastic_recon_covariance}. Also, without loss of generality, let $\pi_1(y_1)\le \pi_1(y_2)$. Note that the disjoint effective support implies that $(\psi_i)_{y_i}^{\lambda_i}$, $i=1,2$ have disjoint support and thus, $\xi((\psi_i)_{y_i}^{\lambda_i}), i=1,2$ are independent. We decompose $(\psi_1)_{y_1}^{\lambda_1} = (\psi_1)_{y_1}^{\lambda_1}(1_{\pi_1(\cdot)\le\pi_1(y_2)} + 1_{\pi_1(\cdot)>\pi_1(y_2)})$ and calculate

\begin{align*}
E\left[\prod_{i=1}^2 (F_{x_i}-F_{y_i})((\psi_i)_{y_i}^{\epsilon_i})\right] &= E\Bigg[\prod_{i=1}^2 (X(x_i)-X(y_i)) \times \bigg(\xi\left((\psi_1)_{y_1}^{\lambda_1} 1_{\pi_1(\cdot)\le \pi_1(y_2)}\right) \times \underbrace{E\left[\xi\left((\psi_2)_{y_2}^{\lambda_2}\right)~\middle\vert \mathcal F_{\pi_1(y_2)}^{(1)}~\right]}_{=0}\\
&\qquad+ \underbrace{E\left[\xi\left((\psi_1)_{y_1}^{\lambda_1} 1_{\pi_1(\cdot)> \pi_1(y_2)}\right)\xi\left((\psi_2)_{y_2}^{\lambda_2}\right)\middle\vert \mathcal F^{(1)}_{\pi_1(y_2)}\right]}_{= E\left[\xi\left((\psi_1)_{y_1}^{\lambda_1} 1_{\pi_1(\cdot)> \pi_1(y_2)}\right)\xi\left((\psi_2)_{y_2}^{\lambda_2}\right)\right] = 0}\bigg)\Bigg] \\
&= 0\,.
\end{align*}

\noindent This shows the claim.
\end{proof}

\noindent We call the reconstruction of $(F_z)$ $f$, as usual. Our next goal is to show, that $f$ is indeed the classical integral against white noise,  which gets constructed the same way as in the martingale measure case: One first defines it on the set of simple functions given by

\begin{equation*}
X(t,x) = \sum_{i=1}^m X_i 1_{(a_i,b_i]}1_{A_i}(x),
\end{equation*}

\noindent
where $a_i<b_i$, $X_i$ is a bounded, $\mathcal F^{(1)}_{a_i}$ measurable random variable and $A_i\in\mathcal B(\R^{d-1})$ is a Borel measurable set. Setting $\mathcal S$ to be the set of simple functions and $\mathcal P$ to be the $\sigma$-field generated by $\mathcal S$, it is then well known that $\mathcal S$ is dense in $L_2(\R^d\times\Omega,\mathcal P)$ (\cite{KPZ}, Lemma 2.1).

For a simple process $X(t,x)$, we can directly define

\begin{equation*}
\int X(z)\xi(dz) := \sum_{i=1}^m X_i\int 1_{(a_i,b_i)}(t)1_{A_i}(x) \xi(dt,dx)\,.
\end{equation*}

\noindent
One can show that 

\begin{align}
\int \cdot ~\xi(dz) : \mathcal S&\to L_2(\Omega)\nonumber\\
X& \mapsto \int X(z)\xi(dz)\label{classicalWNInt}
\end{align} 

\noindent is linear and continuous and thus one can extend the map by an approximation argument to $L_2(\R^d\times\Omega,\mathcal P)$.

\begin{remark}
While this construction is the same as for martingale measures, it does not hold that any martingale measure with the regularity of white noise admits such an integral, so one has to show the existence of the white noise integral separately.
\end{remark}

\noindent For an $X\in L_2(\R^d\times\Omega,\mathcal P)$, we denote the integral against white noise with $X\cdot\xi$ and for a test function $\psi\in C_c^r$, we set

\begin{equation}\label{eq:WNInt}
(X\cdot\xi)(\psi) := ((X\psi)\cdot\xi) = \int_{\R^d}X(z)\psi(z)\xi(dz)\,.
\end{equation}

\noindent Our goal is to show that for all $\psi\in C_c^r$, it holds that 

\begin{equation*}
(X\cdot\xi)(\psi) = f(\psi)
\end{equation*}

\noindent a.s., where $f$ is the reconstructed random distribution attained in Theorem \ref{theoWN}. Before we show this, let us make the following observations: For a $\mathcal F^{(1)}_{t}$-measurable random variable $X\in L_2(\Omega)$ and a test function $\psi$ with support in $[t,\infty)\times\R^{d-1}$, it holds that

\begin{equation}\label{stepwise}
\int_{z\in\R^d}X\psi(z)\xi(dz) = X\int_{z\in\R^d}\psi(z)\xi(dz) = F_y(\psi),
\end{equation}

\noindent if $X(y) = X$. This simply follows from approximating $\psi$ with piecewise constant functions and applying the definition of our integral. We further recall the Itô-isometry: For all $X\in L_2(\R^d\times\Omega,\mathcal P)$,

\begin{equation}\label{WienerIto}
\norm{\int_{z\in\R^d}X(z)\xi(dz)}_{L_2(\Omega)}^2 = \int_{\R^d}\norm{X(z)}_{L_2(\Omega)}^2 dz,
\end{equation}

\noindent which can be found as equation (126) in \cite{KPZ}. With this in mind, we can show:

\begin{theorem}
Let $X$ be as in Theorem \ref{theoWN}. Then, for any $\psi\in C_c^r$, $r = \lfloor d/2\rfloor +1$, it holds that almost surely

\begin{equation*}
(X\cdot\xi)(\psi) = f(\psi)\,,
\end{equation*}

\noindent where $f$ is the reconstruction of the germ $F_z(\psi) = X(z)\xi(\psi)$.
\end{theorem}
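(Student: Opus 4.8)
The plan is to exploit the uniqueness part of Theorem \ref{stochastic_recon_covariance}: since the reconstruction $f$ is the \emph{unique} (up to modification) random distribution satisfying \eqref{uniqueCov1} and \eqref{uniqueCov2}, it suffices to verify that the candidate $\tilde f(\psi) := (X\cdot\xi)(\psi) = \int_{\R^d} X(z)\psi(z)\xi(dz)$ is itself a random distribution on $C_c^r$ obeying these two bounds with the parameters $E = d$ and $\gamma$ from Theorem \ref{theoWN}. That $\tilde f$ is a random distribution will follow directly from the Itô-isometry \eqref{WienerIto}, which gives $\norm{\tilde f(\psi)}_{L_2}^2 = \int_{\R^d}\norm{X(z)}_{L_2}^2\psi(z)^2\,dz\lesssim\norm{\psi}_{C_c^r}^2$ after bounding $\norm{X(z)}_{L_2}$ uniformly on the compact support of $\psi$ (finite because $X\in C^\gamma(L_2(\Omega))$). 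I only need to check the two bounds for test functions $\psi$ with $\supp(\psi)\subset[\tilde C,\tilde R]^d$, $0<\tilde C<\tilde R$, which is the setting of the uniqueness statement.

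The first step of the verification is to rewrite the local error. Using \eqref{stepwise} with $t=\pi_1(x)$ --- valid because $X(x)$ is $\mathcal F^{(1)}_{\pi_1(x)}$-measurable and $\supp(\psi_x^\lambda)\subset[\pi_1(x),\infty)\times\R^{d-1}$ for $\psi$ of strictly positive support --- I can pull $X(x)$ inside the integral and obtain
\[
(\tilde f - F_x)(\psi_x^\lambda) = \int_{\R^d}(X(z) - X(x))\psi_x^\lambda(z)\,\xi(dz).
\]
A minor technical point is that $X(x)\in L_2(\Omega)$ need not be bounded, so \eqref{stepwise} must be extended from the bounded case by an $L_2$-approximation argument through the isometry; this is routine.

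For \eqref{uniqueCov1}, I apply \eqref{WienerIto} to the integrand $(X(z) - X(x))\psi_x^\lambda(z)$ and use $X\in C^\gamma(L_2(\Omega))$ together with $\abs{z - x}\lesssim\lambda$ on $\supp(\psi_x^\lambda)$ and $\int_{\R^d}\psi_x^\lambda(z)^2\,dz\lesssim\lambda^{-d}$:
\[
\norm{(\tilde f - F_x)(\psi_x^\lambda)}_{L_2}^2 = \int_{\R^d}\norm{X(z) - X(x)}_{L_2}^2\,\psi_x^\lambda(z)^2\,dz\lesssim\lambda^{2\gamma}\lambda^{-d},
\]
so that $\norm{(\tilde f - F_x)(\psi_x^\lambda)}_{L_2}\lesssim\lambda^{\gamma - d/2} = \lambda^{\gamma - E/2}$, which is exactly \eqref{uniqueCov1}. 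For \eqref{uniqueCov2}, I polarize the isometry \eqref{WienerIto} to the bilinear identity $E(A_1 A_2) = E\int_{\R^d} H_1(z) H_2(z)\,dz$ for Walsh integrals $A_i = \int H_i\,\xi(dz)$, and apply it to $H_i(z) = (X(z) - X(x_i))\psi_{x_i}^\lambda(z)$. Since Theorem \ref{stochastic_recon_covariance} requires the (full-dimensional, because $e = d$) stochastic effective supports of $(\tilde f - F_{x_i})(\psi_{x_i}^\lambda)$ to be disjoint, the supports of $\psi_{x_1}^\lambda$ and $\psi_{x_2}^\lambda$ are disjoint, hence $\psi_{x_1}^\lambda(z)\psi_{x_2}^\lambda(z)\equiv 0$ and $\abs{E(A_1 A_2)} = 0\lesssim\lambda^{2\gamma}$. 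This mirrors the way \eqref{coherenceCov2} was shown to vanish in Theorem \ref{theoWN}.

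Having verified both bounds, the uniqueness statement of Theorem \ref{stochastic_recon_covariance} forces $\tilde f = f$ up to modification, i.e.\ $(X\cdot\xi)(\psi) = f(\psi)$ almost surely for every $\psi\in C_c^r$, as claimed. I expect the only genuinely delicate points to be the $L_2$-extension of \eqref{stepwise} to unbounded $X(x)$ and the careful justification of the bilinear Itô-isometry for random (predictable) integrands; the regularity bookkeeping is otherwise a direct transcription of the estimates already used in the proof of Theorem \ref{Walsh}.
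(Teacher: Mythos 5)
Your proposal is correct and follows essentially the same route as the paper: verify that $(X\cdot\xi)$ satisfies \eqref{uniqueCov1} via the Itô-isometry \eqref{WienerIto} together with \eqref{stepwise}, verify \eqref{uniqueCov2} by observing that disjoint stochastic effective supports force the supports of $\psi_{x_1}^\lambda$ and $\psi_{x_2}^\lambda$ to be disjoint so the covariance vanishes, and then invoke the uniqueness in Theorem \ref{stochastic_recon_covariance}. The technical points you flag (extending \eqref{stepwise} to unbounded $X(x)$ and polarizing the isometry) are glossed over in the paper as well, and your treatment of them is sound.
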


\begin{proof}
Since $X$ is $\mathcal F^{(1)}$ adapted and continuous, it follows that it is $\mathcal P$ measurable and due to $\psi$ being compactly supported, $X\psi$ is square-integrable. Thus, $(X\cdot\xi)(\psi)$ is well-defined. We show that $(X\cdot\xi)(\psi)$ fulfills \eqref{uniqueCov1} and \eqref{uniqueCov2}, the claim then follows from the uniqueness of the reconstruction. To do this, let $\psi$ have strictly positive support. Then \eqref{stepwise} implies, that $F_z(\psi_z^\lambda) = \int_{y\in\R^d}X(z)\psi_z^\lambda(y)\xi(dy)$, and we can use \eqref{WienerIto} to calculate

\begin{align*}
\norm{\int_{y\in\R^d}X(y)\psi_z^\lambda(y)\xi(dy)-F_z(\psi_z^\lambda)}_{L_2} &= \norm{\int_{y\in\R^d}(X(y)-X(z))\psi_z^\lambda(y)\xi(dy)}_{L_2} \\
&=\left(\int_{\R^d}\norm{X(y)-X(z)}_{L_2(\Omega)}^2\abs{\psi_z^\lambda(y)}^2 dy\right)^{\frac 12} \\
&\le \sup_{y\in \supp(\psi_z^\lambda)}\norm{X(y)-X(z)}_{L_2(\Omega)}\norm{\psi_y^\lambda}_{L_2(\R^d)} \\
&\lesssim \lambda^{\alpha}\lambda^{-\frac d2}\,,
\end{align*}

\noindent Which shows \eqref{uniqueCov1}. It further holds that for any test function $\psi$ with positive support, $\lambda>0$ and $x_1,x_2$ such that the stochastic effective supports do not overlap, that

\begin{align*}
E\left[\prod_{i=1}^2\left((X\cdot\xi)-F_{x_i}\right)(\psi_{x_i}^\lambda)\right] &= E\left[\prod_{i=1}^2 \int_{\R^d}(X(z)-X(x_i))\psi_{x_i}^\lambda(z) \xi(dz)\right]\\
&= 0\,,
\end{align*}

\noindent since the support of $\psi_{x_i}^\lambda$, $i=1,2$ are disjoint. Thus, \eqref{uniqueCov2} holds, which shows the claim.
\end{proof}

\appendix
\section*{Appendix}
\section{Wavelet Bases}

This section is dedicated to the introduction of the wavelet techniques used in Section \ref{sec:proofs}, and mainly follows Section 3.1 of \cite{Hairer}. If one is interested in a more detailed discussion of the topic, it can be found in \cite{meyer}.

Intuitively, a wavelet basis is given by a smooth, compactly supported function $\phi$, such that any test function $\psi$ can be decomposed in the form

\begin{equation}\label{wavelet}
\psi = \lim_{n\to\infty} \sum_{k\in\Delta_n} \scalar{\psi,\phi_k^n}\phi_k^n,
\end{equation}

\noindent where the limit holds in $L_2(\R^d)$ and $\phi_k^n$ is a certain localized version of $\phi$ around $k$, $\Delta_n = 2^{-n}\mathbb Z$ is a mesh, which gets increasingly fine as $n\to\infty$. This allows us to analyze arbitrary test functions $\psi$, using only the local properties of $\phi_k^n$. Such a wavelet can be constructed using the following theorem of Daubechies, which can be found in \cite{debauchies}:

\begin{theorem}[Daubechies]\label{theoWave1}

For any $r\in\mathbb N$, there exists a function $\phi\in C_c^r(\R)$, such that

\begin{enumerate}[i)]
\item For all $z\in\mathbb Z$, it holds that

\begin{equation*}
\scalar{\phi,\phi(\cdot-z)} = \delta_{0,z}.
\end{equation*}

\item $\phi$ is self-replicable, i.e. there exist constants $(a_k)_{k\in\mathbb Z}$, such that

\begin{equation*}
\sqrt{\frac 12}\phi(x/2) = \sum_{k\in\mathbb Z} a_k\phi(x-k).
\end{equation*}
\end{enumerate}
\end{theorem}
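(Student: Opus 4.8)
The plan is to work entirely on the Fourier side and reduce the statement to the construction of a single finite trigonometric polynomial, the low-pass filter. Writing $\widehat{\phi}$ for the Fourier transform of $\phi$ in this proof (not to be confused with the mother wavelets of $\Phi$), the self-replication identity ii) is equivalent to a refinement equation $\widehat{\phi}(\omega) = M_0(\omega/2)\widehat{\phi}(\omega/2)$, where $M_0(\omega) = \frac{1}{\sqrt 2}\sum_k a_k e^{-ik\omega}$ is a trigonometric polynomial with only finitely many nonzero coefficients $a_k$. Iterating formally yields the candidate $\widehat{\phi}(\omega) = \prod_{j\geq 1} M_0(\omega/2^j)$, normalised so that $\widehat{\phi}(0)=1$. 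First I would record that the orthonormality i) is, by Plancherel and periodisation, equivalent to $\sum_{l\in\mathbb Z}\abs{\widehat{\phi}(\omega + 2\pi l)}^2 = 1$, which in turn forces the quadrature-mirror condition $\abs{M_0(\omega)}^2 + \abs{M_0(\omega+\pi)}^2 = 1$ on the filter. Thus everything hinges on producing an $M_0$ satisfying this identity together with a high-order zero at $\omega = \pi$, the latter being what buys both regularity and compact support.

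To force smoothness I would impose the ansatz $M_0(\omega) = \left(\frac{1 + e^{-i\omega}}{2}\right)^N L(\omega)$, so that $\abs{M_0(\omega)}^2 = \cos^{2N}(\omega/2)\,P(\sin^2(\omega/2))$ for a polynomial $P\geq 0$ on $[0,1]$. Substituting into the quadrature-mirror condition and writing $y = \sin^2(\omega/2)$ turns it into the B\'ezout-type identity $(1-y)^N P(y) + y^N P(1-y) = 1$, which has the explicit minimal-degree solution $P(y) = \sum_{k=0}^{N-1}\binom{N-1+k}{k} y^k$. Since this $P$ is nonnegative on $[0,1]$, the Fej\'er--Riesz spectral factorisation lemma lets me extract a trigonometric polynomial $L$ with $\abs{L(\omega)}^2 = P(\sin^2(\omega/2))$, hence a genuine finite filter $M_0$. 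Having only finitely many coefficients $a_k$, the Paley--Wiener theorem then guarantees that the resulting $\phi$ is compactly supported, delivering $\phi \in C_c$ once regularity is established.

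With $M_0$ in hand I would show the infinite product converges locally uniformly and defines $\widehat{\phi} \in L^2$, so that $\phi$ is a well-defined $L^2$ function; the self-replication ii) then holds by construction. For orthonormality i) the infinite-product function need not a priori have orthonormal translates, so I would invoke Cohen's criterion — it suffices that $M_0$ not vanish on $[-\pi/2,\pi/2]$, which holds for the Daubechies filter — to upgrade the periodisation identity $\sum_{l}\abs{\widehat{\phi}(\omega+2\pi l)}^2 = 1$ to genuine orthonormality of $(\phi(\cdot - z))_{z\in\mathbb Z}$. The decisive point is the claim $\phi\in C_c^r$: from $M_0(\omega)=\cos^{N}(\omega/2)e^{-iN\omega/2}L(\omega)$ and $B := \sup_\omega \abs{L(\omega)}$ one derives a decay estimate of the form $\abs{\widehat{\phi}(\omega)} \lesssim (1+\abs{\omega})^{-N + \log_2 B}$; since $\int (1+\abs{\omega})^{k}\abs{\widehat{\phi}(\omega)}\,d\omega < \infty$ for all $k \leq r$ whenever $N$ exceeds $r + \log_2 B + 1$, differentiation under the integral sign gives $\phi \in C^r$. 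Choosing $N$ large enough as a function of $r$ (it grows linearly in $r$) completes the construction.

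The routine parts are the Fourier reformulation and the self-replication identity; the genuine work is twofold. First, the factorisation step, together with the nonnegativity of the explicit polynomial $P$, requires Fej\'er--Riesz and a careful solution of the B\'ezout identity to ensure the filter is simultaneously finite and of the prescribed vanishing order. Second, and most delicate, is converting the crude decay of $\widehat{\phi}$ into the sharp linear relationship between the number $N$ of vanishing moments and the attained regularity $r$; obtaining honest $C^r$ rather than merely $C^{r-\varepsilon}$ relies on the refined decay estimates for infinite products of $M_0$ that form the technical heart of Daubechies' argument. Since the statement is quoted from \cite{debauchies}, in practice I would cite that reference for these last estimates rather than reproduce them.
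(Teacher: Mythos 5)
The paper offers no proof of this statement at all: it is imported as a black box from Daubechies' work, the surrounding text simply pointing to \cite{debauchies}, and the only construction the paper itself performs afterwards is the harmless translation $\phi\mapsto\phi(\cdot-x)$ to move the support into $[C,R]$. Your proposal, by contrast, reconstructs the actual Daubechies argument: the Fourier-side reformulation of self-replication as a refinement equation, the quadrature-mirror condition forced by orthonormality, the B\'ezout identity $(1-y)^N P(y)+y^N P(1-y)=1$ with its minimal positive solution, Fej\'er--Riesz spectral factorisation to produce a finite filter, Paley--Wiener for compact support, and Cohen's criterion (which indeed holds here, since $\cos(\omega/2)$ and $P$ are bounded below on $[-\pi/2,\pi/2]$) to upgrade to genuine orthonormality. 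This is the standard and correct route; what the paper buys by citing is brevity, and what you buy by reconstructing is an actual proof, at the cost of having to confront the regularity estimates.

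One caveat, because it is the single place where your argument as literally written does not close: the constant $B=\sup_\omega\abs{L(\omega)}$ depends on $N$, so the criterion ``$N$ exceeds $r+\log_2 B+1$'' is not a condition one can satisfy by simply increasing $N$ without further information. For the minimal B\'ezout solution one has $B^2=P(1)=\binom{2N-1}{N-1}\approx 4^N/\sqrt{N}$, hence $\log_2 B\approx N-\tfrac14\log_2 N$, and the crude decay exponent $N-\log_2 B$ grows only like $\tfrac14\log_2 N$. This still proves the theorem as stated --- for every $r$ there is an admissible $N$, just exponentially large in $r$ --- but your parenthetical claim that $N$ grows linearly in $r$ is precisely what the one-term estimate cannot deliver; linear growth requires Daubechies' refined bounds on products such as $\sup_\omega\abs{L(\omega)L(2\omega)}^{1/2}$, which you rightly defer to \cite{debauchies}. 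So the proposal is sound as an existence proof, provided you either track the $N$-dependence of $B$ as above and accept the exponential loss, or lean on the cited reference for the sharp decay estimates.
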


\noindent Due to the compact support of $\phi$, it holds that $a_k = 0$ for all but finitely many $k$. In Daubechies' setting, it does not matter where the support of $\phi$ lies, as long as it is compact. However, as we have seen several times in this paper, the stochastic theory requires us to separate the number line $\R$ into the ``future'' $[0,\infty)$ and the ``past'' $(-\infty,0]$. In particular, we are only interested in functions $\phi$ with compact support in the future, i.e. $\supp(\phi)\subset[0,R]$ for some $R>0$. Such a wavelet can be easily constructed by replacing $\phi$ with $\phi(\cdot-x)$ for a large enough $x>0$. Further observe that by choosing $x$ large enough, we can also assure that $a_k=0$ for all $k<0$ in property ii), and that $\phi$ has support in $[C,R]$ for some constants $0<C<R$.

Property $i)$ ensures that the recentered wavelets $\phi_z$, $z\in\mathbb Z$ form an orthonormal system in $L_2(\R)$. For $n\in\mathbb N, k\in\Delta_n$, we set

\begin{equation*}
\phi_k^n(x) := 2^{\frac n2}\phi(2^{n}(x-k)) = 2^{-\frac n2}\phi_k^{2^{-n}}(x).
\end{equation*}

\noindent to be the $L_2$-normalization of $\phi$. It is shown in \cite{debauchies}, that $(\phi_k^n)_{k\in\Delta_n}$ almost form an $L_2(\R)$ basis in the sense that \eqref{wavelet} holds in $L_2(\R)$. Since $\phi$ is self-replicable, it follows that 

\begin{equation*}
\phi_k^{n} = \sum_{l\in\Delta_{n+1}} a_{l}^{n+1} \phi_{k+l}^{n+1}\,,
\end{equation*}

\noindent is also self-replicable with $a_l^{n+1} := a_{2^{n+1}l}$\,.

The second ingredient of a wavelet basis is a function $\hat\phi$, which is able to give one a description of the ``missing part'' $\psi-\sum_{k\in\Delta_n}\scalar{\psi,\phi_k^n}\phi_k^n$ in the sense that

\begin{equation}\label{eq:hatPhi}
\psi = \sum_{k\in\Delta_n} \scalar{\psi,\phi_k^n}\phi_k^n + \sum_{m\ge n}\sum_{k\in\Delta_m} \scalar{\psi,\hat\phi_k^m}\hat\phi^m_k
\end{equation}

\noindent
holds for any $n\in\mathbb N$ in $L_2(\R)$, where $\hat\phi_k^n := 2^{\frac n2}\hat\phi(2^n(x-k))$ as before. The existence of such a function is shown through the following theorem from \cite{meyer}:

\begin{theorem}[Meyer]\label{theoWave2}

\noindent
Let 

\begin{equation*}
V_n := \mySpan\{\phi_k^n~\vert~k\in\Delta_n\}\,.
\end{equation*}

\noindent
Then there is a function $\hat\phi_k^n\in C^r_c(\R)$, such that

\begin{enumerate}[i)]

\item $\hat V_n := \mySpan\{\hat\phi_k^n~\vert~k\in\Delta_n\}$ is the orthorgonal complement of $V_{n}$ in $V_{n+1}$, i.e. $\hat V_n = V_{n}^{\perp V_{n+1}}$.

\item $(\hat \phi_k^n)_{n\in\mathbb N,k\in\Delta_n}$ is a family of orthonormal functions:

\begin{equation*}
\scalar{\hat\phi_k^n,\hat\phi_l^m} = \delta_{n,m}\delta_{k,l}.
\end{equation*}

\item $\hat\phi$ eliminates all polynomials of degree less than $r$: For any monomial $x^m$, $m\le r$, it holds that

\begin{equation*}
\scalar{x^m,\hat\phi} = 0.
\end{equation*}
\end{enumerate}
\end{theorem}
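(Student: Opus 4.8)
The plan is to realize the nested family $\{V_n\}$ as a multiresolution analysis and to build $\hat\phi$ as the wavelet attached to the low-pass filter $(a_k)$ of Theorem \ref{theoWave1} via the quadrature-mirror-filter construction. First I would observe that the self-replicability (property ii of Theorem \ref{theoWave1}), written at level $n$ as $\phi_k^n=\sum_{l}a_l^{n+1}\phi_{k+l}^{n+1}$, says precisely that $\phi_k^n\in V_{n+1}$, so $V_n\subset V_{n+1}$ and the spaces are nested. Because every object is obtained from its level-$0$ counterpart by the $L_2$-isometric dyadic scaling $x\mapsto 2^{n/2}(\cdot)(2^n x)$, it suffices to construct a single $\hat\phi\in C_c^r(\R)$ whose integer translates form an orthonormal basis of $\hat V_0:=V_0^{\perp V_1}$; the rescaled functions $\hat\phi_k^n$ then take care of every scale at once.

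Next I would give the explicit construction. Writing the refinement relation as $\phi=\sqrt2\sum_k a_k\,\phi(2\,\cdot-k)$, I set the high-pass coefficients $g_k:=(-1)^k a_{1-k}$ and define
\begin{equation*}
\hat\phi(x):=\sqrt2\sum_k g_k\,\phi(2x-k).
\end{equation*}
Only finitely many $a_k$ are nonzero, so the sum is finite and $\hat\phi\in C_c^r(\R)$, with support controlled by $\supp(\phi)$. The orthonormality $\scalar{\phi,\phi(\cdot-l)}=\delta_{0,l}$ (property i of Theorem \ref{theoWave1}) is equivalent to the filter identity $\sum_k a_k a_{k+2l}=\delta_{0,l}$, and from this together with the definition of $g_k$ one verifies the two companion identities $\sum_k g_k g_{k+2l}=\delta_{0,l}$ and $\sum_k a_k g_{k+2l}=0$. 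Pushing these back through the refinement relations for $\phi$ and $\hat\phi$ yields $\scalar{\hat\phi(\cdot-k),\hat\phi(\cdot-l)}=\delta_{k,l}$ and $\scalar{\hat\phi(\cdot-k),\phi(\cdot-l)}=0$, so the translates of $\hat\phi$ are orthonormal and $\hat V_0\subset V_0^{\perp V_1}$.

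To upgrade this to the full complement $V_1=V_0\oplus\hat V_0$ (property i), I would argue that the $2\times2$ modulation matrix assembled from the two filters $(a_k)$ and $(g_k)$ is unitary, which is exactly the content of the three identities above; hence $\{\phi(\cdot-k)\}\cup\{\hat\phi(\cdot-k)\}$ and the generating system $\{\sqrt2\,\phi(2\,\cdot-k)\}$ of $V_1$ span the same space, giving $\hat V_0=V_0^{\perp V_1}$. Property ii then follows from nesting: for $n<m$ one has $\hat V_n\subset V_{n+1}\subset V_m$ while $\hat V_m=V_m^{\perp V_{m+1}}\perp V_m$, so $\hat V_n\perp\hat V_m$, and the within-scale orthonormality is just the level-$0$ computation transported by the scaling isometry.

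The step I expect to be the main obstacle is property iii, since it is the only place where the regularity index $r$ genuinely enters. The mechanism is that Daubechies' construction forces the filter $(a_k)$ to satisfy the sum rules of order $r+1$ precisely in order to achieve $\phi\in C_c^r$; equivalently, by the Fix--Strang conditions, the shift-invariant space $V_0$ reproduces locally every polynomial of degree at most $r$, i.e. for each $m\le r$ there are coefficients $c^{(m)}_k$ with $x^m=\sum_k c^{(m)}_k\,\phi(x-k)$ on any bounded set. Because $\hat\phi$ is compactly supported and, by the second paragraph, orthogonal to every $\phi(\cdot-k)$, pairing against $\hat\phi$ and using that only finitely many terms overlap its support gives $\scalar{x^m,\hat\phi}=\sum_k c^{(m)}_k\scalar{\phi(\cdot-k),\hat\phi}=0$. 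The delicacy here is twofold: one must import the polynomial-reproduction property of the Daubechies filters (equivalently invoke the general principle that a compactly supported $C^r$ orthonormal wavelet automatically carries $r+1$ vanishing moments), and one must justify the pairing for the non-$L_2$ function $x^m$ by exploiting the compact support of $\hat\phi$.
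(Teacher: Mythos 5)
The paper does not prove this statement at all --- it is imported from Meyer \cite{meyer} (built on the Daubechies filter of Theorem \ref{theoWave1}) --- and your argument is precisely the standard quadrature-mirror-filter proof found there: setting $g_k=(-1)^k a_{1-k}$, using unitarity of the modulation matrix to get $\hat V_0=V_0^{\perp V_1}$, using the nesting $\hat V_n\subset V_{n+1}\subset V_m\perp \hat V_m$ for cross-scale orthogonality, and invoking the classical vanishing-moment theorem (equivalently Fix--Strang polynomial reproduction of degree $\le r$ in $V_0$) for property iii), so the proposal is correct and matches the cited proof. The only point to watch is that the paper later also wants $\supp(\hat\phi)\subset[C,R]$ with $C>0$, which your specific choice $g_k=(-1)^k a_{1-k}$ need not deliver; an odd integer shift of the high-pass filter fixes this without affecting i)--iii).
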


\noindent
Note that $V_{n+1}\subset V_n$ by self-replicability. \eqref{wavelet} implies that $L_2(\R) = \overline{\bigcup_{n\in\mathbb N} V_n}$, which together with property i) automatically gives us $L_2(\R) = \overline{V_n \cup \left(\bigcup_{m\ge n}\hat V_m\right)}$ and thus shows \eqref{eq:hatPhi}. Property iii) causes $\scalar{\psi,\hat\phi_k^m}$ to decay faster than one would expect as $m\to\infty$, which is formalized in Lemma \ref{lemma1}.

Let us generalize this setting to $\R^d$ equipped with the scaling $S$: Define

\begin{equation*}
\Delta_n^S = \left\{\sum_{i=1}^d 2^{-ns_i}l_i\vec e_i ~\bigg\vert~ l_i\in\mathbb Z\right\},
\end{equation*}

\noindent to be the rescaled mesh on $\R^d$, where $\vec e_i = (0,\dots,0,1,0,\dots,0)$ is the $i$-th unit vector of $\R^d$. We usually drop the index $S$ on $\Delta_n^S$ and only write $\Delta_n$, since the scaling is fixed within each section of this paper. The wavelet function in $\R^d$ is then simply given by

\begin{equation*}
\phi(y) = \prod_{i=1}^d \phi(\pi_i(y))\,,
\end{equation*}

\noindent with its $L_2$ localizations $\phi_k^n(x) := 2^{n\frac d2}\phi(2^n(x-k))$ as before. We define

\begin{align}
V_n &:= \mySpan\{\phi_k^n~\vert~ k\in\Delta_n\} \label{Vl}\\
\hat V_n &:= V_n^{\perp V_{n+1}} \label{hatVl}
\end{align}

\noindent as before. As it turns out, $\hat V_n$ no longer gets generated by a single function $\hat\phi$, but by a finite set of functions $\hat\phi\in\Phi$, which have the property that $\hat V_n = \mySpan\{\hat\phi_k^n~\vert~\hat\phi\in\Phi,k\in\Delta_n\}$. To be more precise, $\Phi$ is given by all possible functions of the form $\hat\phi(x_1,\dots,x_n) = \prod_{i=1}^n \tilde \phi_i(x_i)$, where all $\tilde\phi_i(x_i)$ are either $\phi(x_i)$ or $\hat\phi(x_i)$ and at least one $\tilde\phi_i(x_i)$ is given by $\hat\phi(x_i)$ (for reference, see \cite{meyer}). This construction immediately gives us that $\phi, \hat\phi$ have support in $[C,R]^d$ for some $0<C<R$ and that $\phi$ is self-replicable with $(a_k)_{k\in\mathbb Z^d}$ such that $a_k\neq 0$ only if $\pi_1(k),\dots,\pi_d(k)\ge 0$.

Since the one-dimensional wavelet $\hat\phi$ eliminates polynomials of degree less or equal to $r$, this implies that for all $\hat\phi\in\Phi$, $\scalar{\hat\phi,p} = 0$ for all multivariate polynomials $p$ of degree less or equal to $r$.

We use the following notations for the projections onto $V_n$ and $\hat V_n$ in $L_2(\R^d)$:

\begin{align*}
P_n \psi &:= \sum_{k\in\Delta_n} \scalar{\psi,\phi_k^n}\phi_k^n \\
\hat P_n \psi &:= \sum_{\hat\phi\in\Phi}\sum_{k\in\Delta_n} \scalar{\psi,\hat\phi_k^n}\hat\phi_k^n \,.
\end{align*}

\noindent
We will need to quantify the speed of convergence of $\scalar{\psi,\phi_k^n}, \scalar{\psi,\hat\phi_k^n}$ for $n\to\infty$. This gets achieved by the following lemma:

\begin{lemma}\label{lemma1}
Let $\psi\in C_c^r$. Then, for all $\hat\phi\in\Phi$, $n\in\mathbb N, y\in\Delta_n,z\in\R^d$ and $2^{-n}\le\lambda\le 1$, it holds that

\begin{align}
\abs{\scalar{\phi_y^n,\psi_z^\lambda}} &\lesssim 2^{-n\frac {\abs{S}}2}\lambda^{-\abs{S}}\norm{\psi}_{C_c^r} \label{LemIneq1}\\
\abs{\scalar{\hat\phi_y^n,\psi_z^\lambda}} &\lesssim 2^{-n\frac {\abs{S}}2-n\tilde r}\lambda^{-\abs{S}-\tilde r}\norm{\psi}_{C_c^r}.\label{LemIneq2}
\end{align}

\noindent where $\tilde r = r\cdot\min_{i=1,\dots,d}(s_i)$.
\end{lemma}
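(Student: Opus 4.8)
The plan is to treat the two estimates separately, since only \eqref{LemIneq2} uses the cancellation property of $\hat\phi$. For \eqref{LemIneq1} I would use a crude $L^1$--$L^\infty$ split, $\abs{\scalar{\phi_y^n,\psi_z^\lambda}}\le\norm{\phi_y^n}_{L^1}\norm{\psi_z^\lambda}_{L^\infty}$. A change of variables $u=S^{2^n}(x-y)$ in the (anisotropic) localization $\phi_y^n(x)=2^{n\abs S/2}\phi(S^{2^n}(x-y))$ produces a Jacobian $2^{n\abs S}$ and hence $\norm{\phi_y^n}_{L^1}=2^{-n\abs S/2}\norm{\phi}_{L^1}$, while $\norm{\psi_z^\lambda}_{L^\infty}=\lambda^{-\abs S}\norm{\psi}_{L^\infty}\le\lambda^{-\abs S}\norm{\psi}_{C_c^r}$. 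Multiplying the two gives \eqref{LemIneq1} immediately, and in fact this part does not even need the hypothesis $2^{-n}\le\lambda$.

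For \eqref{LemIneq2} the idea is to exploit that every $\hat\phi\in\Phi$ annihilates multivariate polynomials of total degree at most $r$, as recorded after Theorem \ref{theoWave2}. First I would verify that this cancellation survives localization: substituting $u=S^{2^n}(x-y)$ gives $\scalar{\hat\phi_y^n,p}=2^{-n\abs S/2}\scalar{\hat\phi,\,p(y+S^{2^{-n}}\cdot)}$, and $u\mapsto p(y+S^{2^{-n}}u)$ is again a polynomial of degree $\le\deg p$, hence killed. Consequently I may subtract from $\psi_z^\lambda$, inside the pairing, any polynomial of degree $\le r$; I would choose $p=T_y^{r-1}\psi_z^\lambda$, the Taylor polynomial of $\psi_z^\lambda$ about $y$ of degree $r-1$. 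Bounding $\abs{\scalar{\hat\phi_y^n,\psi_z^\lambda}}$ by $\norm{\hat\phi_y^n}_{L^1}=2^{-n\abs S/2}\norm{\hat\phi}_{L^1}$ times the supremum of the Taylor remainder over $\supp(\hat\phi_y^n)$ reduces the whole matter to estimating that remainder.

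The core computation, and the main obstacle, is the anisotropic bookkeeping of this remainder. Since $\supp(\hat\phi)\subset[C,R]^d$, a point $x\in\supp(\hat\phi_y^n)$ satisfies $\abs{x_i-y_i}\lesssim 2^{-ns_i}$, while differentiating $\psi_z^\lambda(x)=\lambda^{-\abs S}\psi(S^{1/\lambda}(x-z))$ yields $\norm{\partial^k\psi_z^\lambda}_\infty\lesssim\lambda^{-\abs S-\sum_i k_i s_i}\norm{\psi}_{C_c^{\abs k}}$. The degree-$r$ remainder is therefore controlled by $\sum_{\abs k=r}\lambda^{-\abs S-\sum_i k_i s_i}\prod_i 2^{-ns_ik_i}\norm{\psi}_{C_c^r}$, and setting $\sigma:=\sum_i k_i s_i$ each summand equals $\lambda^{-\abs S}(\lambda 2^n)^{-\sigma}\norm{\psi}_{C_c^r}$. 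The decisive point is that for $\abs k=r$ one has $\sigma\ge r\min_i s_i=\tilde r$, whereas the hypothesis $2^{-n}\le\lambda$ forces $\lambda 2^n\ge 1$; hence $(\lambda 2^n)^{-\sigma}\le(\lambda 2^n)^{-\tilde r}$ and the remainder is $\lesssim 2^{-n\tilde r}\lambda^{-\abs S-\tilde r}\norm{\psi}_{C_c^r}$. Combining this with the $L^1$ bound on $\hat\phi_y^n$ gives \eqref{LemIneq2}. I expect the delicate points to be the uniform passage from the total-degree condition $\sum_i k_i=r$ to the single effective order $\tilde r$, and making sure the assumption $2^{-n}\le\lambda$ is invoked exactly where the exponent $\sigma$ is traded down to $\tilde r$.
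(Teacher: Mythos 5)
Your proposal is correct and takes essentially the same route as the paper: the first bound is the same $L^1$--$L^\infty$ estimate after rescaling, and the second rests on the identical mechanism of subtracting a Taylor polynomial of degree $r-1$ (annihilated by $\hat\phi$) and then trading the anisotropic exponent $\sigma=\sum_i k_i s_i\ge r\min_i s_i=\tilde r$ against the hypothesis $2^n\lambda\ge 1$. The only difference is bookkeeping: the paper first changes variables so the wavelet is the fixed function $\hat\phi(u)$ and expands $\psi$ at $x_0=S^{2^n}y-S^{1/\lambda}z$, bounding the remainder via $\norm{S^{1/(2^n\lambda)}u}_1\le(2^n\lambda)^{-\tilde s}\norm{u}_1$, whereas you expand the localized $\psi_z^\lambda$ at $y$ directly and track the scaling through the derivative bounds; the two computations are equivalent.
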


\begin{proof}
\noindent
The proof consists mainly of straight-forward calculations. To show \eqref{LemIneq1}, observe that

\begin{align*}
\abs{\scalar{\phi_y^n,\psi_z^\lambda}} &\le 2^{n\frac {\abs{S}}2}\lambda^{-\abs{S}} \int \abs{\phi\left(S^{2^{n}}(x-y)\right)\psi\left(S^{\frac 1\lambda}(x-z)\right)}dx \\
&= 2^{-n\frac {\abs{S}}2} \lambda^{-\abs{S}} \int\abs{\phi(u-S^{2^{n}}y)\psi\left(S{^\frac 1{2^n\lambda}}u - S^{\frac 1\lambda}z\right)}du \\&\le 2^{-n\frac {\abs{S}}2} \lambda^{-\abs{S}}\sup_{u\in\R^d}\abs{\psi(u)}\norm{\phi}_{L_1} \\&\lesssim 2^{-n\frac {\abs S}2} \lambda^{-\abs S} \norm{\psi}_{C_c^r}.
\end{align*}

\noindent To see \eqref{LemIneq2} we will use that $\hat\phi$ eliminates polynomials of order less or equal to $r$. Let $T^{r-1}_{x_0}$ be the $(r-1)$-th Taylor approximation of $\psi$ at the point $x_0$. Recall that

\begin{equation*}
\abs{\psi(x_0+u)-T^{r-1}_{x_0}(x_0+u)}\lesssim \norm{u}_1^r\norm{\psi^{(r)}}_{\infty},
\end{equation*}

\noindent where $\norm{u}_1 = \abs{u_1}+\dots+\abs{u_d}$ is the 1-norm of the vector $u$. Let $\tilde s = \min_{i=1,\dots,d}(s_i)$ and observe that $\frac{1}{2^n\lambda}\le 1$ implies

\begin{align*}
 \norm{S^{\frac 1{2^n\lambda}}u}_1 &= \abs{\frac{1}{2^{ns_1}\lambda^{s_1}}u_1}+\dots+\abs{\frac{1}{2^{ns_d}\lambda^{s_d}}u_d}\\
 & = \left(\frac{1}{2^n\lambda}\right)^{\tilde s}\left(\abs{\frac{1}{2^{ns_1-\tilde s}\lambda^{s_1-\tilde s}}u_1}+\dots+\abs{\frac{1}{2^{ns_d-\tilde s}\lambda^{s_d-\tilde s}}u_d}\right)\\
 &\le \left(\frac{1}{2^n\lambda}\right)^{\tilde s}\norm{u}_1.
\end{align*}

\noindent With this in mind, we can use $\int \hat\phi(u) T^{r-1}_{x_0}(u) du = 0$ to get

\begin{align*}
\abs{\scalar{\hat\phi_y^n,\psi_z^\lambda}} &= 2^{-n\frac {\abs S}2}\lambda^{-\abs S}\abs{\int\hat\phi(u) \psi\Big(S^{\frac 1{2^n\lambda}}u\underbrace{-S^{\frac 1\lambda}z+S^{2^n}y}_{=:x_0}\Big) du}\\
&\le 2^{-n\frac {\abs S}2}\lambda^{-\abs S}\int\abs{\hat\phi(u)}\underbrace{\abs{\psi\left(S^{\frac 1{2^n\lambda}}u+x_0\right)-T_{x_0}^{r-1}\left(S^{\frac 1{2^n\lambda}}u+x_0\right)}}_{\lesssim \norm{S^{\frac 1{2^n\lambda}}u}_1^r\norm{\psi^{(r)}}_{\infty}\lesssim 2^{-n\tilde r}\lambda^{-\tilde r}\norm{u}_1^r\norm{\psi^{(r)}}_{\infty}}du \\&\lesssim 2^{-n\frac {\abs S}2-n\tilde r}\lambda^{-{\abs S}-\tilde r} \int\abs{\hat\phi(u)}\norm{u}_1^r\norm{\psi^{(r)}}_{\infty}du \\&\lesssim 2^{-n\frac {\abs S}2-n\tilde r}\lambda^{-\abs S-\tilde r}\norm{\psi}_{C_c^r}.
\end{align*}
\end{proof}

\noindent While this lemma is mostly technical, let us only remark that \eqref{LemIneq2} shows that $\hat\phi$ eliminating polynomials causes $\scalar{\psi,\hat\phi_k^n}$ to converge faster to zero than one would expect. In fact, by choosing $r$ big enough, we can cause $\scalar{\psi,\hat\phi_k^n}$ to converge at any rate we want.

Throughout this paper, we have used that $\eqref{wavelet}$ and $\eqref{eq:hatPhi}$ hold in more topologies than just $L_2(\R^d)$-convergence.  To show this, we will use a wavelet basis in $C_c^q$ for some $q>r$. Since $\phi,\hat\phi\in C_c^q\subset C_c^r$ holds for all $\hat\phi\in \Phi$, it is easy to show that any $C_c^q$ wavelet basis is automatically a $C_c^r$ wavelet basis. So while we are technically using $C_c^q$ wavelets to prove the main results of this paper, the test functions $\psi$ lie in $C_c^r$ and we treat $(\phi,\Phi)$ as $C_c^r$ wavelets.

\begin{lemma}\label{lem:PnPsiConverges}
Let $q$ be such that $\tilde q>r+\abs S$ and choose a wavelet basis $(\phi,\Phi)$ of $C_c^{q}$. Then for all $\psi\in C_c^{q}$, it holds that 

\begin{equation}\label{PnPsiConverges}
P_n\psi\xrightarrow{n\to\infty} \psi
\end{equation}

\noindent
converges in $C_c^r$.

Furthermore, let $\alpha<0$ such that $\tilde r+\alpha>0$ and consider a distribution $f\in C^\alpha(\R^d)$. Then for any test function $\psi\in C_c^r$, it holds that

\begin{align}
f(\psi) &= \lim_{n\to\infty} f(P_n\psi) \label{eq:convergence1}\\
&= f(P_n(\psi)) +\sum_{m\ge n} f(\hat P_m\psi)\,,\label{eq:convergence2}
\end{align}

\noindent where \eqref{eq:convergence2} holds for any $n\in\mathbb N$. If $f\in C^\alpha(L_p)$ for some $1\le p<\infty$, both identities hold as limits in $L_p$. 

\end{lemma}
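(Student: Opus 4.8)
The plan is to establish the three claims in the order stated, in each case upgrading the $L_2$-convergent wavelet identity \eqref{eq:hatPhi} to a stronger topology. For the convergence \eqref{PnPsiConverges} I would write $\psi-P_n\psi=\sum_{m\ge n}\hat P_m\psi$ and estimate each summand in $C_c^r$. Differentiating gives $\partial^\beta\hat P_m\psi=\sum_{\hat\phi\in\Phi}\sum_{k\in\Delta_m}\scalar{\psi,\hat\phi_k^m}\partial^\beta\hat\phi_k^m$ for $\abs{\beta}\le r$, where the rescaling produces a derivative growth of the form $\norm{\partial^\beta\hat\phi_k^m}_\infty\lesssim 2^{m\abs{S}/2}\,2^{m\scalar{s,\beta}}$. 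The coefficients are controlled by the $C_c^q$-analogue of \eqref{LemIneq2}: since the wavelet basis lies in $C_c^q$, $\hat\phi$ annihilates polynomials up to degree $q$ and $\psi\in C_c^q$, so the Taylor argument of Lemma \ref{lemma1} yields $\abs{\scalar{\psi,\hat\phi_k^m}}\lesssim 2^{-m\abs{S}/2-m\tilde q}\norm{\psi}_{C_c^q}$. Since only boundedly many $k$ contribute at each point, and at most $\lesssim 2^{m\abs{S}}$ of them meet $\supp(\psi)$, multiplying these bounds and summing over $m$ produces a geometric series whose decay exponent is governed by the smoothness gap; the hypothesis $\tilde q>r+\abs{S}$ is exactly what renders this exponent positive, so the tail tends to $0$ in $C_c^r$.

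For the distributional identities I would fix $\psi\in C_c^r$ and set $g_n(\psi):=f(P_n\psi)+\sum_{m\ge n}f(\hat P_m\psi)$. Writing $\hat\phi_k^m=2^{-m\abs{S}/2}(\hat\phi)_k^{2^{-m}}$ as a scale-$2^{-m}$ localization of $\hat\phi\in C_c^r$, the bound \eqref{ineq:Calpha} gives $\abs{f(\hat\phi_k^m)}\lesssim 2^{-m\abs{S}/2-m\alpha}$; combined with the coefficient estimate $\abs{\scalar{\psi,\hat\phi_k^m}}\lesssim 2^{-m\abs{S}/2-m\tilde r}\norm{\psi}_{C_c^r}$ (now only $C_c^r$-regularity of $\psi$ is used) and the count $\lesssim 2^{m\abs{S}}$ of relevant $k$, this gives $\abs{f(\hat P_m\psi)}\lesssim 2^{-m(\tilde r+\alpha)}\norm{\psi}_{C_c^r}$. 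As $\tilde r+\alpha>0$ the defining series converges absolutely. It is also independent of $n$: since $\hat V_n=V_n^{\perp V_{n+1}}$ by \eqref{hatVl} we have $P_{n+1}=P_n+\hat P_n$, whence $g_{n+1}-g_n=f(P_{n+1}\psi-P_n\psi-\hat P_n\psi)=f(0)=0$. Taking $n=0$ in these bounds shows $\abs{g_0(\psi)}\lesssim\norm{\psi}_{C_c^r}$, so $g:=g_n$ is a well-defined continuous linear functional on $C_c^r$.

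It then remains to identify $g$ with $f$. For $\psi\in C_c^q$ the first part gives $P_n\psi\to\psi$ in $C_c^r$, so by continuity of $f$ we have $f(P_n\psi)\to f(\psi)$; letting $n\to\infty$ in $g=g_n$ and using that the tail vanishes yields $g(\psi)=f(\psi)$ there. Since $C_c^q$ is dense in $C_c^r$ and both $g$ and $f$ are continuous, $g=f$ on all of $C_c^r$, which is precisely \eqref{eq:convergence2}; passing to the limit $n\to\infty$ then gives \eqref{eq:convergence1}. The $C^\alpha(L_p)$ case is handled identically, replacing every absolute value by $\norm{\cdot}_{L_p}$: the estimates hold verbatim by the triangle inequality and \eqref{ineq:CalphaRand}, the series converge in $L_p(\Omega)$, and the density step invokes the continuity of $f:C_c^r\to L_p(\Omega)$.

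The main obstacle is the first part: promoting the $L_2$-identity \eqref{eq:hatPhi} to genuine $C_c^r$-convergence requires the quantitative balance between the decay of the wavelet coefficients (driven by the extra smoothness $q$ of $\psi$ together with the polynomial annihilation of $\hat\phi$) and the growth $2^{m\scalar{s,\beta}}$ of the derivatives of the rescaled wavelets, and it is here that the margin $\tilde q>r+\abs{S}$ is consumed. The only other delicate point is conceptual rather than computational: $f$ may be applied termwise to the wavelet series only once that series converges in $C_c^r$, which is why the identities \eqref{eq:convergence1}--\eqref{eq:convergence2} are first verified on $C_c^q$ and only afterwards extended to $C_c^r$ by density, rather than attempted directly.
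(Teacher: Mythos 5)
Your proposal is correct and follows essentially the same route as the paper: both parts rest on bounding $\norm{\hat P_m\psi}_{C_c^r}$ (respectively $\abs{f(\hat P_m\psi)}$) via the coefficient decay of Lemma \ref{lemma1} applied at level $q$ (respectively $r$) together with the count of overlapping wavelets, summing the resulting geometric series under $\tilde q>r+\abs S$ (respectively $\tilde r+\alpha>0$), and then identifying the limit with $f$ by testing on the dense subspace $C_c^q$ and using continuity. Your telescoping argument $g_{n+1}-g_n=f(P_{n+1}\psi-P_n\psi-\hat P_n\psi)=0$ is just the paper's identity $f(P_l\psi)=f(P_n\psi)+\sum_{m=n}^{l}f(\hat P_m\psi)$ written differently, so there is no substantive divergence.
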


\begin{proof}
\noindent We begin by showing \eqref{PnPsiConverges}. Let $k\ge l$ be two natural numbers. Then it holds that

\begin{equation*}
P_k\psi = P_l\psi +\sum_{m=l}^{k-1}\hat P_m\psi\,,
\end{equation*}

\noindent so to show that $P_n\psi$ is Cauchy, it suffices to show $\norm{\hat P_m\psi}_{C_c^r}\lesssim 2^{-m(\tilde q-r-\abs S)}\norm{\psi}_{C_c^q}$. By Remark \ref{rem:allDistAreCalpha}, $\norm{\hat\phi_x^m}_{C_c^r}\lesssim 2^{m\left(r+\frac{\abs S}{2}\right)}$ holds for all $m\in\mathbb N, x\in\Delta_m$. Recall that $\psi\in C_c^q$, so we can use \eqref{LemIneq2} with $q$ to show

\begin{align*}
\norm{\hat P_m\psi}_{C_c^r} &\le \sum_{x\in\Delta_m} \abs{\scalar{\hat\phi_x^m,\psi}}\norm{\hat\phi_x^m}_{C_c^r} \\
&\lesssim \sum_{x\in\Delta_m, \scalar{\hat\phi_x^m,\psi}\neq 0} 2^{-m(\tilde q-r)}\norm{\psi}_{C_c^q}\,.
\end{align*}

\noindent
Since the number of non-zero terms is of order $2^{m\abs S}$, the claim follows. Thus, $P_n\psi$ is Cauchy and thus convergent in $C_c^r$. Since $P_n\psi\to\psi$ in $L_2(\R^d)$ as $n\to\infty$, the limit can only be $\psi$. It follows that $P_n\psi\xrightarrow{n\to\infty}\psi$ holds in $C_c^r$.

For the second part of the lemma, we will only show the deterministic case, for the $L_p$ case one simply replaces the absolute values with the $L_p$ norms. Observe that for any $0\le n\le l$,


\begin{align*}
\sum_{k\in\Delta_l}f(\phi_k^l)\scalar{\phi_k^l,\psi} &= f(P_l\psi) \\
&= f(P_n\psi)+\sum_{m=n}^l f(\hat P_m\psi)\\
&= \sum_{k\in\Delta_n}f(\phi_k^n)\scalar{\phi_k^n,\psi}+\sum_{m=n}^l\sum_{\hat\phi\in\Phi}\sum_{k\in\Delta_m}f(\hat\phi_k^m)\scalar{\hat\phi_k^m,\psi}\,,
\end{align*}

\noindent which shows that \eqref{eq:convergence1} and \eqref{eq:convergence2} are the same limit, and it suffices to show one of them. Using \eqref{LemIneq2}, we get that


\begin{equation*}
\sum_{k\in\Delta_m} \abs{f(\hat\phi_k^m)\scalar{\hat\phi_k^m,\psi}}\lesssim 2^{-m(\alpha+\tilde r)}\norm{\psi}_{C_c^r}\,,
\end{equation*}

\noindent where we use that the number of non-zero terms in the sum is of order $2^{m\abs{S}}$. It follows that \eqref{eq:convergence2} converges for $l\to\infty$, and $\abs{f(P_n\psi)}\lesssim 2^{-n\alpha}\norm{\psi}_{C_c^r}$ implies that the limit is a $C_c^r$ distribution.

We call the limit $g$ and it remains to show that $g=f$. By \eqref{PnPsiConverges}, $f(P_n\tilde\psi)\to f(\tilde\psi)$ as $n\to\infty$ holds for $\tilde\psi\in C_c^{q}$ for large enough $q$, which implies that $f(\tilde\psi)=g(\tilde\psi)$. Since $C_c^{q}\subset C_c^r$ is dense, it follows that $f(\psi) = g(\psi)$ for all $\psi\in C_c^r$.
\end{proof}

\section{A BDG-type inequality}

Let us introduce the inequality that allows us to ``borrow regularity'' from the conditional expectation of a process: Let $(\Omega,\mathcal F,P)$ be a complete probability space. Given a process $(Z_i)_{i\in\mathbb N}\subset L_p(\Omega)$ in discreet time that is adapted to $\mathcal F_{i+1}$ for some filtration $\mathcal F_i$, we can use Doobs decomposition \cite{doob} on the sum

\begin{equation*}
\sum_{i=1}^N Z_i = \sum_{i=1}^N E^{\mathcal F_i}Z_i + \sum_{i=1}^N\left(Z_i- E^{\mathcal F_i}Z_i\right)\,.
\end{equation*}

\noindent This allows us to bound $\norm{\sum_{i=1}^N Z_i}_{L_p}$ by $\sum_{i=1}^N\norm{E^{\mathcal{F}_i}Z_i}_{L_p}$ plus $\norm{\sum_{i=1}^N\left(Z_i- E^{\mathcal F_i}Z_i\right)}_{L_p}$. Since the last term is a sum over martingale differences, we can find a better bound than simply pulling the $L_p$-norm into the sum with BDG-type inequalities, see \cite{BDG}. So as long as the first sum (i.e. the conditional expectation of $Z_i$) is well-behaved, we can get a good bound on $\norm{\sum_{i=1}^N}_{L_p}$. Formally, this reads as:

\begin{lemma}
Let $p\ge 2$ and for $i\in\mathbb N$ let $Z_i\in L_p(\Omega)$. Let $(\mathcal F_i)_{i\in\mathbb N}$ be a filtration, such that $Z_i$ is $\mathcal F_{i+1}$-measurable for all $i\in\mathbb N$. Then, the following inequality holds for all $N\ge 1$:

\begin{equation}\label{BDG}
\norm{\sum_{i=1}^N Z_i}_{L_p} \lesssim\sum_{i=1}^N\norm{E^{\mathcal F_i}Z_i}_{L_p} + \left(\sum_{i=1}^N \norm{Z_i-E^{\mathcal F_i}Z_i}_{L_p}^2\right)^{\frac 12}\,.
\end{equation}
\end{lemma}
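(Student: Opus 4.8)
The plan is to follow the Doob decomposition already sketched before the statement. Writing $M_i := Z_i - E^{\mathcal F_i}Z_i$, one splits
\begin{equation*}
\sum_{i=1}^N Z_i = \sum_{i=1}^N E^{\mathcal F_i}Z_i + \sum_{i=1}^N M_i
\end{equation*}
and applies the triangle inequality in $L_p$. The first sum is handled trivially: another application of the triangle inequality gives $\norm{\sum_{i=1}^N E^{\mathcal F_i}Z_i}_{L_p}\le \sum_{i=1}^N\norm{E^{\mathcal F_i}Z_i}_{L_p}$, which is exactly the first term on the right-hand side of \eqref{BDG}. All the work therefore goes into the second sum, whose $L_p$-norm I must bound by $\big(\sum_{i=1}^N\norm{M_i}_{L_p}^2\big)^{1/2}$.

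The key observation is that $(M_i)$ is a sequence of martingale differences. Indeed, since $Z_i$ is $\mathcal F_{i+1}$-measurable and $E^{\mathcal F_i}Z_i$ is $\mathcal F_i\subset\mathcal F_{i+1}$-measurable, each $M_i$ is $\mathcal F_{i+1}$-measurable, while by construction $E^{\mathcal F_i}M_i = 0$. Re-indexing the filtration by $\mathcal G_n := \mathcal F_{n+1}$, the partial sums $S_n := \sum_{i=1}^n M_i$ are $\mathcal G_n$-measurable and satisfy $E^{\mathcal G_{n-1}}M_n = E^{\mathcal F_n}M_n = 0$, so $(S_n)_n$ is a genuine $(\mathcal G_n)$-martingale. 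The discrete Burkholder--Davis--Gundy inequality (see \cite{BDG}) then yields, for $p\ge 2$,
\begin{equation*}
\norm{\sum_{i=1}^N M_i}_{L_p} \lesssim \norm{\left(\sum_{i=1}^N M_i^2\right)^{1/2}}_{L_p}\,,
\end{equation*}
with a constant depending only on $p$.

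It remains to pass from the quadratic variation to the $\ell^2$-sum of the individual $L_p$-norms. Here I use that $p\ge 2$, so that $L_{p/2}$ is a genuine normed space and the triangle inequality applies:
\begin{equation*}
\norm{\left(\sum_{i=1}^N M_i^2\right)^{1/2}}_{L_p}^2 = \norm{\sum_{i=1}^N M_i^2}_{L_{p/2}} \le \sum_{i=1}^N\norm{M_i^2}_{L_{p/2}} = \sum_{i=1}^N\norm{M_i}_{L_p}^2\,.
\end{equation*}
Taking square roots and combining with the bound for the conditional-expectation part gives \eqref{BDG}. The only genuinely non-elementary ingredient is the discrete BDG inequality itself; the main point to get right is the bookkeeping of the filtration (the shift from $\mathcal F_i$ to $\mathcal F_{i+1}$) so that $(M_i)$ really forms a martingale-difference sequence. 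Everything else is the triangle inequality, used once in $L_p$ for the predictable part and once in $L_{p/2}$ for the quadratic variation, the latter being exactly where the hypothesis $p\ge 2$ enters.
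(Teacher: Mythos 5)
Your proof is correct and follows exactly the route the paper itself sketches in the paragraph preceding the lemma (and the proof it defers to in L\^e's paper): Doob decomposition into a predictable part handled by the triangle inequality and a martingale-difference part handled by the discrete BDG inequality, followed by Minkowski's inequality in $L_{p/2}$, which is precisely where $p\ge 2$ is used. The filtration bookkeeping (shifting to $\mathcal G_n=\mathcal F_{n+1}$ so that $E^{\mathcal G_{n-1}}M_n=0$) is handled correctly, so there is nothing to add.
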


\noindent
The proof can be found in \cite{le}, at the beginning of Section 2.

\section{Kolmogorov's continuity theorem for negative Hölder regularity}

We would like to present a version of Kolmogorov's continuity theorem for distributions $f\in C^\alpha(L_p)$ with negative $\alpha$. To do so, we need the fattening of a compact set $K\subset\R^d$, which for every $\epsilon>0$ is defined to be

\begin{equation*}
K_\epsilon := \left\{y\in\R^d~\middle\vert~ \inf_{x\in K}\abs{x-y}\le\epsilon\right\}\,.
\end{equation*}

\noindent
With this, we can formulate the following lemma:

\begin{lemma}\label{lem:Kolmogorov}
Let $K\subset \R^d$ be compact and $\alpha<0, 1\le p<\infty$. Assume that $f\in C^\alpha(L_p(\Omega),K_\epsilon)$ for some $\epsilon>0$. Let $\hat\alpha<\alpha-\frac{\abs S}{p}$ and choose $\hat r$ such that $\tilde{\hat r}>-\hat\alpha$, where $\tilde{\hat r}$ is given as in Lemma \ref{lemma1}. Then there is a non-negative random variable $B\in L_p(\Omega)$ such that for each $\psi\in C_c^{\hat r}$ with $\norm{\psi}_{C_c^{\hat r}} = 1$ and $x\in\R^d, \lambda\in(0,1]$ such that $\supp(\psi_x^\lambda)\subset K$ it holds that

\begin{equation}\label{ineq:Kolmogorov}
\abs{f(\psi_x^\lambda,\omega)}\le B(\omega)\lambda^{\hat\alpha}\,.
\end{equation}

\noindent
It follows that $f(\cdot,\omega)\in C^{\hat\alpha}(K)$ a.s.
\end{lemma}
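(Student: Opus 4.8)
The plan is to run the classical wavelet proof of Kolmogorov's continuity theorem: use the scale-by-scale $L_p$-bounds on the wavelet coefficients of $f$ to build a single random variable $B\in L_p(\Omega)$ that dominates all coefficients simultaneously, and then read off the pathwise bound directly from the wavelet expansion \eqref{eq:convergence2}. Fix a $C_c^{\hat r}$ wavelet basis $(\phi,\Phi)$ as in the appendix. Since $\phi_y^n=2^{-n\frac{\abs S}{2}}\phi_y^{2^{-n}}$ is, up to the factor $2^{-n\frac{\abs S}{2}}$, an $L_1$-localization of $\phi$ at scale $2^{-n}$, the hypothesis $f\in C^\alpha(L_p(\Omega),K_\epsilon)$ together with \eqref{ineq:CalphaRand} gives, for every $y$ with $\supp(\phi_y^n)\subset K_\epsilon$,
\[
\norm{f(\phi_y^n)}_{L_p}\lesssim 2^{-n(\alpha+\frac{\abs S}{2})},\qquad \norm{f(\hat\phi_y^m)}_{L_p}\lesssim 2^{-m(\alpha+\frac{\abs S}{2})},\quad \hat\phi\in\Phi.
\]
For each scale set $\Lambda_n:=\{y\in\Delta_n:\supp(\phi_y^n)\cap K\neq\emptyset\}$. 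Because wavelet supports have diameter $\lesssim 2^{-n\min_i s_i}$, we have $\supp(\phi_y^n)\subset K_\epsilon$ for $y\in\Lambda_n$ once $n$ is large, $\abs{\Lambda_n}\lesssim 2^{n\abs S}$, and the finitely many coarse scales contribute only finitely many coefficients, each still in $L_p$ since $f$ is a random distribution.

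The crux is the construction of $B$. I would set
\[
B:=\Bigg(\sum_{n\ge 0}2^{np(\hat\alpha+\frac{\abs S}{2})}\max_{y\in\Lambda_n}\abs{f(\phi_y^n)}^p+\sum_{m\ge 0}\sum_{\hat\phi\in\Phi}2^{mp(\hat\alpha+\frac{\abs S}{2})}\max_{y\in\Lambda_m}\abs{f(\hat\phi_y^m)}^p\Bigg)^{\frac 1p}.
\]
Bounding the maximum by the sum, $E\big[\max_{y\in\Lambda_n}\abs{f(\phi_y^n)}^p\big]\le\sum_{y\in\Lambda_n}E\big[\abs{f(\phi_y^n)}^p\big]\lesssim 2^{n\abs S}2^{-np(\alpha+\frac{\abs S}{2})}$, and the same for the $\hat\phi$-terms, so
\[
E[B^p]\lesssim\sum_{n\ge 0}2^{np(\hat\alpha+\frac{\abs S}{2})}2^{n\abs S}2^{-np(\alpha+\frac{\abs S}{2})}=\sum_{n\ge 0}2^{np(\hat\alpha-\alpha+\frac{\abs S}{p})},
\]
which converges precisely because $\hat\alpha<\alpha-\frac{\abs S}{p}$. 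Hence $B\in L_p(\Omega)$, and on the full-measure event $\{B<\infty\}$ one has the simultaneous pathwise bounds $\abs{f(\phi_y^n)}\le B\,2^{-n(\hat\alpha+\frac{\abs S}{2})}$ and $\abs{f(\hat\phi_y^m)}\le B\,2^{-m(\hat\alpha+\frac{\abs S}{2})}$ for all admissible indices at once. The factor $2^{n\abs S}$ paid in the union bound over $\Lambda_n$ is exactly the $\tfrac{\abs S}{p}$ loss in regularity.

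Next I would evaluate $f$ on $\psi_x^\lambda$ via \eqref{eq:convergence2} (applicable since $\tilde{\hat r}>-\hat\alpha>-\alpha$ forces $\tilde{\hat r}+\alpha>0$). Given $\psi$ and $\lambda\in(0,1]$ with $\supp(\psi_x^\lambda)\subset K$, choose $n$ with $2^{-n}\le\lambda<2^{-n+1}$, so Lemma \ref{lemma1} applies at every level $\ge n$, only $O(1)$ coarse wavelets contribute, and $\lesssim 2^{m\abs S}\lambda^{\abs S}$ fine wavelets contribute at level $m$. With the pathwise coefficient bounds, \eqref{LemIneq1} and \eqref{LemIneq2} give on $\{B<\infty\}$ a coarse contribution $\lesssim B\,2^{-n\hat\alpha}\lesssim B\lambda^{\hat\alpha}$ and a level-$m$ fine contribution $\lesssim B\,2^{-m(\hat\alpha+\tilde{\hat r})}\lambda^{-\tilde{\hat r}}$. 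Summing the latter over $m\ge n$ is a convergent geometric series precisely because $\tilde{\hat r}>-\hat\alpha$, yielding $\lesssim B\,2^{-n\hat\alpha}\lesssim B\lambda^{\hat\alpha}$; this absolute convergence also shows the wavelet series converges pathwise for a.e.\ $\omega$, so its pathwise sum defines a modification $\bar f$ of $f$ agreeing with the $L_p$-limit in \eqref{eq:convergence2}. Absorbing the implicit constant into $B$ yields \eqref{ineq:Kolmogorov} for all admissible $\psi,x,\lambda$ simultaneously on $\{B<\infty\}$, whence $f(\cdot,\omega)\in C^{\hat\alpha}(K)$ a.s.

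The main obstacle, and the only genuinely nontrivial step, is the integrability of $B$: upgrading scale-by-scale $L_p$ control of the coefficients into a single a.s.-finite random variable dominating all of them at once. Everything else is bookkeeping with Lemma \ref{lemma1} and geometric summation, and the two standing hypotheses $\hat\alpha<\alpha-\frac{\abs S}{p}$ and $\tilde{\hat r}>-\hat\alpha$ enter exactly at the two convergence checks above. The only technical nuisance is the finitely many coarse scales where $\supp(\phi_y^n)$ need not lie inside $K_\epsilon$, but these are harmless as they contribute finitely many $L_p$ coefficients.
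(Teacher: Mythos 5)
Your proof is correct and follows essentially the same route as the paper's: bound the wavelet coefficients $f(\phi_y^n)$, $f(\hat\phi_y^m)$ scale by scale via $C^\alpha(L_p)$, pay the factor $2^{n\abs S/p}$ through a union bound over the $\lesssim 2^{n\abs S}$ relevant coefficients per scale (which is exactly where $\hat\alpha<\alpha-\frac{\abs S}{p}$ enters), and then sum the wavelet expansion geometrically using $\tilde{\hat r}+\hat\alpha>0$ and Lemma \ref{lemma1}. The only difference is organizational: you assemble $B$ as a weighted $\ell^p$-sum of per-scale maxima before estimating $f(\psi_x^\lambda)$, whereas the paper defines normalized per-scale suprema $B_n,\hat B_n$ with uniformly bounded $L_p$-norm and takes a weighted supremum over $n$ afterwards; these are interchangeable.
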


\begin{proof}
We will assume throughout this proof that $\lambda<\tilde\epsilon := \min_{i=1,\dots,d}\left(\frac{\epsilon}{R-C}\right)^{\frac 1{s_i}}$, where $0<C<R$ are given by a wavelet bases. Extending the result to $\lambda\in(0,1]$ simply adds a factor only depending on $\epsilon, C,R$ to all calculations. Let $(\phi,\Phi)$ be $C_c^{\hat r}$ wavelets and set

\begin{align*}
B_n &:= \sup_{\supp(\phi_x^n)\subset K_{\epsilon}} \frac{\abs{f(\phi_x^n)}}{c_n} \\
\hat B_n &:= \sup_{\hat\phi\in\Phi,\supp(\hat\phi_x^n)\subset K_{\epsilon}} \frac{\abs{f(\hat\phi_x^n)}}{c_n}\,,
\end{align*}

\noindent with normalizing factor $c_n = 2^{-n(\frac 12-\frac 1p)\abs S-n\alpha}$. We can calculate the $L_p$ norm of these terms to be bound by the constant

\begin{align*}
\norm{B_n}_{L_p}^p &\le \sum_{\supp(\phi_x^n)\subset K_{\epsilon}} \norm{f(\phi_x^n)}_{L_p}^p c_n^{-p} \\
&\lesssim \norm{f}_{C^{\alpha}(L_p)}^p\,,
\end{align*}

\noindent where we use that the number of $x\in\R^d$, such that $\supp(\phi_x^n)\subset K_\epsilon$ is of order $2^{n\abs{S}}$. Analogously, $\norm{\hat B_n}_{L_p} \lesssim \norm{f}_{C^{\alpha}(L_p)}$ is bound by a constant. 

Let $\lambda\in(0,\tilde\epsilon]$, $\psi\in C_c^{\hat r}$ and $x\in\R^d$ such that $\supp(\psi_x^\lambda)\subset K$. Choose $n$ in such a way, that $2^{-n}\le\lambda<2^{-n+1}$. By our choice of $\tilde\epsilon$, it follows that for all $\phi_y^n, \hat\phi_y^n$ such that their supports overlap with the support of $\psi_x^\lambda$, we have $\supp(\phi_y^n),\supp(\hat\phi_y^n)\subset K_{\epsilon}$. We calculate, that

\begin{align*}
\abs{f(\psi_x^\lambda)} &= \sum_{y\in\Delta_n} \abs{f(\phi_y^n)\scalar{\phi_y^n,\psi_x^\lambda}}+\sum_{\hat\phi\in\Phi}\sum_{m\ge n}\sum_{y\in\Delta_m} \abs{f(\hat\phi_y^m)\scalar{\hat\phi_y^m,\psi_x^\lambda}} \\
&\le B_nc_n\sum_{y\in\Delta_n}\abs{\scalar{\phi_y^n,\psi_x^\lambda}}+\sum_{m\ge n}\hat B_mc_m \sum_{y\in\Delta_m}\abs{\scalar{\hat\phi_y^m,\psi_x^\lambda}}\,.
\end{align*}

\noindent Set $\kappa := \alpha-\frac{\abs{S}}{p}-\hat\alpha > 0$. By using \eqref{LemIneq1} and \eqref{LemIneq2} and carefully counting the number of non-zero terms, we get that

\begin{align*}
\abs{f(\psi_x^\lambda)} &\lesssim \left(B_n +\sum_{m\ge n} \hat B_m 2^{(-m-n)(\alpha-\frac{\abs{S}}p+\tilde{\hat r})}\right)\lambda^{\alpha-\frac{\abs S}p}\\
&\approx \left(B_n +\sum_{m\ge n} \hat B_m 2^{-(m-n)(\alpha-\frac{\abs{S}}p+\tilde{\hat r})}\right)2^{-n\kappa}\lambda^{\hat \alpha}\,,
\end{align*}

\noindent so by choosing

\begin{equation*}
B := D\sup_{n\in\mathbb N}~ 2^{-n\kappa}\left(B_n +\sum_{m\ge n} \hat B_m2^{-(m-n)(\alpha-\frac{\abs S}p+\tilde{\hat r})}\right)
\end{equation*}

\noindent for a smartly chosen constant $D>0$, we get \eqref{ineq:Kolmogorov}. It remains to show that $B\in L_p(\Omega)$, which can be easily seen from the calculation

\begin{align*}
\norm{B}_{L_p} &\le D\sum_{n\in\mathbb N}~2^{-n\kappa}\left(\norm{B_n}_{L_p} +\sum_{m\ge n} \norm{\hat B_m}_{L_p}2^{-(m-n)(\alpha-\frac{\abs S}p+\tilde{\hat r})}\right)\\
&\lesssim D\sum_{n\in\mathbb N} 2^{-n\kappa}\left(1+\sum_{m\ge n}2^{-(m-n)(\alpha-\frac{\abs S}p+\tilde{\hat r})}\right)\norm{f}_{C^\alpha(L_p)}\\
&\lesssim D\norm{f}_{C^\alpha(L_p)}\,.
\end{align*}
\end{proof}

\section*{Acknowledgments}

I want to thank Peter Friz for suggesting this topic to me, and for many helpful discussions throughout the process of writing this paper. I further want to thank Khoa Lê and Philipp Forstner for their helpful remarks, and Carlo Bellingeri for our mathematical discussions. Last but not least, I want to thank the anonymous reviewers whose feedback greatly improved the quality of this paper.

\section*{Funding}
 This project was supported by the IRTG 2544, which is funded by the DFG.

\printbibliography

\end{document}